\documentclass[11pt, twoside, reqno]{amsart}
\usepackage{graphicx} 
\usepackage{geometry}
\usepackage[english]{ babel}
\usepackage{todonotes}

\usepackage[utf8]{inputenc}
\usepackage{babel}
\usepackage{lmodern}

\usepackage{amsmath}
\usepackage{amssymb}
\usepackage{amsthm}

\usepackage{csquotes}

\usepackage{mathtools}
\usepackage{stmaryrd}

\usepackage{wrapfig}
\usepackage{subcaption}
\usepackage{multirow}

\usepackage{enumitem}
\usepackage{verbatim}
\usepackage{wrapfig}
\usepackage{makecell}
\usepackage{xcolor}

\usepackage[
backend=biber,
giveninits=true,
maxbibnames=99,
doi=true,
eprint=true,
url=true
]{biblatex}

\AtEveryBibitem{%
	\iffieldundef{doi}
	{\iffieldundef{eprint}
		{}
		{\clearfield{url}}}
	{\clearfield{url}}%
}
\usepackage{hyperref}
\hypersetup{colorlinks,linkcolor={blue!75!black},citecolor={blue!75!black},urlcolor={blue!75!black}}

\usepackage{biblatex}

\usepackage{tikz}
\definecolor{mypurple}{RGB}{140,80,200}
\definecolor{myorange}{RGB}{255,150,60}

\DeclareMathOperator*{\id}{id}

\DeclareMathOperator*{\sgn}{sgn}

\DeclareMathOperator*{\pos}{pos}
\DeclareMathOperator*{\lin}{lin}

\DeclareMathOperator*{\inter}{int}

\DeclareMathOperator*{\conv}{conv}

\DeclareMathOperator{\vol}{vol}

\newcommand{\R}{\mathbb{R}}

\numberwithin{equation}{section}

\usepackage[english]{ babel}

\usepackage[utf8]{inputenc}
\usepackage[T1]{fontenc}
\usepackage{lmodern}
\usepackage{verbatim}

\usepackage{amsmath}
\usepackage{amssymb}
\usepackage{amsthm}

\usepackage{csquotes}

\usepackage{mathtools}
\usepackage{stmaryrd}

\usepackage{wrapfig}
\usepackage{subcaption}
\usepackage{multirow}

\usepackage{sidenotes}
\usepackage{enumitem}
\usepackage{verbatim}
\usepackage{wrapfig}
\usepackage{makecell}
\usepackage{xcolor}
\usepackage{todonotes}

\usepackage{hyperref}
\hypersetup{backref, pdfpagemode=FullScreen, colorlinks=true,
  citecolor=magenta, linkcolor=cyan, urlcolor=blue}

\usepackage{csquotes} 

\makeatletter
\def\blx@addquote#1{%
  #1%
}
\DeclareFieldFormat{title}{#1}
\DeclareFieldFormat{citetitle}{#1}
\DeclareFieldFormat{booktitle}{#1}
\DeclareFieldFormat{maintitle}{#1}
\makeatother

\renewbibmacro{in:}{}

\DeclareFieldFormat{pages}{#1}

\DeclareFieldFormat{journaltitle}{\mkbibemph{#1}}
\DeclareFieldFormat{series}{\mkbibemph{#1}}
\DeclareFieldFormat{booktitle}{\mkbibemph{#1}} 

\usepackage{xurl} 
\usepackage{hyperref} 
\hypersetup{
    colorlinks=true,
    linkcolor=blue,
    citecolor=blue,
    urlcolor=blue,
    filecolor=magenta,
    pdfpagelayout=OneColumn,
    pdftitle={Your Paper Title},
    pdfauthor={Your Name},
    pdfsubject={Mathematics},
    pdfkeywords={keyword1, keyword2}
}

\usepackage{tikz}

\mathtoolsset{showonlyrefs}

\newcommand{\sph}{\mathbb{S}}
 
\newcommand{\cv}{{\tt cv}}

\DeclareMathOperator{\relint}{relint}
\let\emptyset\varnothing

\mathtoolsset{showonlyrefs}

\newtheorem{theorem}{Theorem}[section]
\newtheorem{proposition}[theorem]{Proposition}
\newtheorem{lemma}[theorem]{Lemma}
\newtheorem{conjecture}[theorem]{Conjecture}

\newtheorem{oldtheorem}{Theorem}

\newtheorem{oldconjecture}{Conjecture}

\newtheorem{corollary}[theorem]{Corollary}

\theoremstyle{definition}

\theoremstyle{remark}
\newtheorem{remark}[theorem]{Remark}
\newtheorem{example}[theorem]{Example}

\newcommand{\U}{\mathcal{U}}

\title{On the Solution Sets of the Cone-Volume Map}
\author{Tom Baumbach}
\email{baumbach@math.tu-berlin.de}
\address{Institut für Mathematik, Technische Universität Berlin, Sekr MA 4-1, Straße des 17 Juni 136, D-10632 Berlin, Germany}
\author{Zhang Chen}
\email{2111165@tongji.edu.cn}
\address{School of Mathematical Sciences, Key Laboratory of Intelligent Computing and Applications (Ministry of Education), Tongji University, Shanghai, 200092, China} 
\author{Martin Henk} 
\address{Institut für Mathematik, Technische Universität Berlin, Sekr MA 4-1, Straße des 17 Juni 136, D-10632 Berlin, Germany}
\email{henk@math.tu-berlin.de}
\date{\today}

\begin{document}

\begin{abstract}
For a fixed set $U=(u_1,\ldots,u_m)$ of outer unit normals, we consider the
cone-volume map which associates to a vector of support numbers $b\in\R^m_{> 0}$ the
cone-volume vector $\gamma\in\R^m$ of the polytope
\[
P(U,b)=\{x\in\mathbb{R}^n:U^\top x\leq b\}.
\]
We study its fibers $S(U,\gamma)$ over prescribed cone-volume vectors
$\gamma$, with particular emphasis on finiteness and non-uniqueness.
We give characterizations of the irreducibility of $U$ in terms of
properties of the cone-volume map and its solution sets. In particular,
for irreducible $U$, we show that $S(U,\gamma)$ is finite for all
$\gamma$ outside a set of measure zero in the cone-volume cone, and we
give an explicit upper bound for its cardinality. In the planar case,
we prove that $S(U,\gamma)$ has semialgebraic dimension at most one for
arbitrary $U$. Moreover, for every $U\in\mathcal{U}(2,m)$ with $m\geq 4$,
there exists a cone-volume vector $\gamma$ for which $S(U,\gamma)$
contains at least two distinct elements. As a consequence, there exist
two distinct smooth planar convex bodies with the same cone-volume
measure. Finally, for $U$ in general position, we show that the number of
solutions is odd for every cone-volume vector outside the exceptional set.
\end{abstract}

\maketitle
\section{Introduction}

The logarithmic Minkowski problem asks for the characterization of
Borel measures on the unit sphere that arise as cone-volume measures
of convex bodies, usually under the assumptions that $K$ is origin-symmetric or  centred. For a convex body $K\subset\R^n$ containing the
origin in its interior, its cone-volume measure $V_K$ is given by
\[
dV_K=\frac{1}{n}h_K\,dS_K,
\]
where $h_K$ denotes the support function of $K$ and $S_K$ its surface
area measure (see Section \ref{sec:preliminaries} for precise definitions). The logarithmic Minkowski problem was introduced and developed in
the groundbreaking paper \cite{boroczky2013logarithmic_JAMS}. It has become a central problem in modern
Brunn--Minkowski theory, with connections to affine convex geometry,
fully nonlinear Monge--Amp\`ere equations, functional inequalities, and
spectral problems. For its development and the surrounding
$L_p$-Brunn--Minkowski theory, we refer to
\cite{boroczky2013logarithmic_JAMS,Boeroeczky2023,
	KolesnikovMilman2022,HuangYangZhang2025,
	BoeroeczkyFigalliRamos2026, Schneider2014, boroczky2012log, CHEN2020107166}
and the references therein.

In this paper we study for a fixed collection
of outer unit normals, the set of polytopes having a
prescribed cone-volume vector (measure), with particular emphasis on the
dimension and cardinality of this solution set. To be more precise we need some notation. 
Throughout, we work in the $n$-dimensional Euclidean space $\R^n$.
We denote by $\langle\cdot,\cdot\rangle$ the standard scalar product,
by $\|\cdot\|$ the associated Euclidean norm, and by
\[
\sph^{n-1}:=\{x\in\R^n:\|x\|=1\}
\]
the Euclidean unit sphere. For a $k$-dimensional measurable set $M$
contained in an affine subspace of $\R^n$, $\vol_k(M)$ denotes its
$k$-dimensional Lebesgue measure with respect to its affine hull; in the case $k=n$ we just write $\vol(M)$. 
For $M\subset\R^n$, we write $\conv(M)$, $\pos(M)$ and $\lin(M)$ for its convex, positive
and linear hull, respectively.

For $n,m\in\mathbb N$, let
\[
\U(n,m)
\coloneqq 
\left\{
(u_1,\ldots,u_m)\in(\sph^{n-1})^m :
\pos(u_1,\ldots,u_m)=\R^n,\;
u_i\neq u_j \text{ for } i\neq j
\right\},
\]
and for $ U=(u_1,\ldots,u_m)\in\U(n,m)$ and  $b\in\R^m_{>0}$, we consider the polytope 
\[
P(U,b)\coloneqq 
\{x\in\R^n:U^\intercal x\leq b\}.
\]

Writing
\[
F_i(U,b)
=
P(U,b)\cap
\{x\in\R^n:\langle u_i,x\rangle=b_i\},
\]
the cone-volume measure of $P(U,b)$ on $\sph^{n-1}$ is defined by 
\[
V_{P(U,b)}(\cdot)
\coloneqq 
\sum_{i=1}^m
\frac{b_i}{n}\vol_{n-1}(F_i(b))\,\delta_{u_i}(\cdot),
\]

where $\delta_{u_i}$ is the Dirac measure centred at $u_i$, i.e., $\delta_{u_i}(\eta)=1$ if $u_i\in\eta $, otherwise $\delta_{u_i}(\eta)=0$, and here $\eta\subset\sph^{n-1}$ is a Borel set.  
Accordingly, we define the \emph{cone-volume vector}
\[
\gamma(U,b)
=
\left(
\frac{b_1}{n}\vol_{n-1}(F_1(U,b)),\ldots,
\frac{b_m}{n}\vol_{n-1}(F_m(U,b))
\right)
\]
consisting of the $n$-dimensional volumes of the cones $\conv(F_i(U,b)\cup\{0\})$, $1\leq i\leq m$. Observe, that some $F_i(U,b)$ can be empty or of dimension less than $n-1$.

The  \emph{cone-volume map} is the map assigning to a right hand side $b\in \R^m_{>0}$ the cone-volume vector of the polytope $P(U,b)$ , i.e., 
\[
f_{\cv}^U:\R^m_{>0}\longrightarrow\R^m_{\geq0},
\qquad
b\longmapsto\gamma(U,b).
\]
When there is no ambiguity, we suppress the dependence on $U$ and write
$P(b)$, $F_i(b)$, and $\gamma(b)$ instead of
$P(U,b)$, $F_i(U,b)$, and $\gamma(U,b)$. We
further set
\[
C_{\cv}(U)
\coloneqq 
f_{\cv}^U(\R^m_{>0})\cap\R^m_{>0}
\]
and, for $\gamma\in C_{\cv}(U)$,
\[
S(U,\gamma)
\coloneqq 
(f_{\cv}^U)^{-1}(\gamma).
\]
Thus $S(U,\gamma)$ is the fiber of the cone-volume map over $\gamma$ which is known to be a semialgebraic set (see Section 2).
The main purpose of this paper is to study the structure of these
fibers, in particular their dimension and cardinality.

An important role is played by the reducibility of $U$. We call $U\in\U(n,m)$ \emph{reducible} if there
exists a nonempty proper subtuple $U_1\subsetneq U$ such that
\[
\R^n
=
\lin(U_1)\oplus\lin(U\setminus U_1),
\]
and \emph{irreducible} otherwise. 
In the geometric setting this is equivalent to a decomposition 
\[
P(U,b)=Q_1\oplus Q_2, 
\]
where $Q_1$ and $Q_2$ are polytopes of dimensions $n_1,n_2 \geq 1$, respectively,  contained in complementary subspaces. It is not hard to see that for $t>0$ the polytopes $t^{n_2}\,Q_1\oplus t^{-n_1}\,Q_2$ have the same cone-volume vector as $P(U,b)$. It is conjectured that this is essentially the only source of non-uniqueness in the centred logarithmic Minkowski problem.  
We recall that a convex body is called centred if its centroid is the origin.

\begin{oldconjecture}[
	{\cite[Conjecture 3.1]{Boeroeczky2023}}]
	\label{conj:centered-uniqueness}
	Let $K,M\subset\R^n$ be centred convex bodies with $\vol(K)=\vol(M)$. Then $V_K=V_M$ if and
	only if there exist convex bodies
	$K_1,\ldots,K_k,M_1,\ldots,M_k$ such that
	\[
	K=K_1\oplus\cdots\oplus K_k,
	\qquad
	M=M_1\oplus\cdots\oplus M_k,
	\]
	with $\dim K_i\geq1$ and  $K_i=\lambda_iM_i$, for some $\lambda_i>0$, $1\leq i\leq k$.
\end{oldconjecture}

In particular, Conjecture~\ref{conj:centered-uniqueness} predicts
uniqueness for centred polytopes with irreducible outer unit normals.
The conjecture is known in the origin-symmetric planar case
\cite{boroczky2012log}; see also \cite{CHEN2020107166} for
origin-symmetric bodies close to the Euclidean ball. To formulate the planar symmetric result in terms of the fibers
introduced above, we call
$U=(u_1,\ldots,u_{2m})\in\U(n,2m) $
\emph{origin symmetric} if, after a suitable relabelling, for $i=1,\ldots,m$  

\begin{equation*} u_{i+m}=-u_i.
\end{equation*} 	

Observe, for an origin-symmetric $U\in\U(2,2m)$, irreducibility is equivalent
to $m>2$.

\begin{oldtheorem}[\cite{boroczky2012log}]
	\label{thm:known-symmetric-plane}
	Let $U\in\U(2,2m)$ be origin symmetric and irreducible, i.e.,
	$m>2$, and let
	\[
	\gamma=f_{\cv}^U(b,b)\in C_{\cv}(U)
	\]
	for some $b\in\R^m_{>0}$. Then
	\[
	\left|
	S(U,\gamma)
	\cap
	\bigl\{(c,c):c\in\R^m_{>0}\bigr\}
	\right|
	=1.
	\]
\end{oldtheorem}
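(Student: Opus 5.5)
The statement is the planar origin-symmetric uniqueness theorem of B\"or\"oczky--Lutwak--Yang--Zhang. My plan is to combine a variational description of symmetric solutions with the strict planar log-Brunn--Minkowski inequality. Fix $b,c\in\R^m_{>0}$ with $f_{\cv}^U(b,b)=f_{\cv}^U(c,c)=\gamma$, and set $K=P(U,(b,b))$, $L=P(U,(c,c))$. Both are origin-symmetric polygons with the same cone-volume vector; in particular $\vol(K)=\vol(L)=\sum_i\gamma_i$. I want to show $b=c$.

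The first step is the log-Minkowski inequality in the plane for origin-symmetric bodies:
\[
\int_{\sph^1}\log\frac{h_L}{h_K}\,dV_K\;\ge\;\frac{\vol(K)}{2}\log\frac{\vol(L)}{\vol(K)}.
\]
Equality holds only if $L$ is a dilate of $K$, or $K$ and $L$ are parallelograms with parallel sides. I would derive this from the planar log-Brunn--Minkowski inequality. That inequality says that for the log-combination
\[
Q_\lambda=\{x:\langle x,u\rangle\le h_K(u)^{1-\lambda}h_L(u)^{\lambda}\ \forall u\},
\]
one has $\vol(Q_\lambda)\ge\vol(K)^{1-\lambda}\vol(L)^\lambda$. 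The proof goes by approximating both bodies by symmetric polygons with common normals and moving pairs of opposite sides, then using a Minkowski mixed-area type argument along the one-parameter family. Differentiating at $\lambda=0$ gives the displayed integral inequality, because $\frac{d}{d\lambda}\vol(Q_\lambda)|_{0}=\int\log(h_L/h_K)h_K\,dS_K$, which equals $2\int\log(h_L/h_K)\,dV_K$ in dimension two.

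The second step applies the inequality twice. With $V_K=V_L$ and $\vol(K)=\vol(L)$, it gives both $\int\log(h_L/h_K)\,dV_K\ge0$ and $\int\log(h_K/h_L)\,dV_L\ge0$. The second integral equals $-\int\log(h_L/h_K)\,dV_K$, so the first integral is $0$ and equality holds. By the equality case, either $L=\lambda K$, and equal volumes force $\lambda=1$, or both are parallelograms with parallel sides. Parallelograms are excluded because irreducibility ($m>2$) means at least three pairs of opposite normals. To see this, note that a parallelogram $P(U,(c,c))$ has only two pairs of facets of positive length. All $\gamma_i$ are positive, so every facet has positive length, which forces $m=2$.

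The third step recovers $b=c$. Since $K=L$ and every facet is nondegenerate, $h_K(u_i)=b_i$ and $h_L(u_i)=c_i$. Therefore $b=c$ and $|S(U,\gamma)\cap\{(c,c)\}|=1$.

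The main obstacle is the strict planar log-Brunn--Minkowski inequality together with its equality case. My first approach would be the polygon-based proof of B\"or\"oczky et al. Fix $K$ and minimize $\int\log h_L\,dV_K$ over symmetric $L$ of fixed volume. A minimizer exists by a compactness argument that uses symmetry to prevent degeneration. The Lagrange condition then shows that the minimizer has cone-volume measure proportional to $V_K$. A careful analysis of the second variation, moving pairs of sides of the polygon and using the fact that $m\ge3$ pairs of directions prevents splitting, shows that the minimizer is unique up to dilation and equals $K$.
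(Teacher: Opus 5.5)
The paper gives no proof of this statement; it quotes it from \cite{boroczky2012log}. Your Steps 2 and 3 are correct, and they are how uniqueness is deduced there from the planar log-Minkowski inequality. $V_K=V_L$ and $\vol(K)=\vol(L)$ force equality. $\gamma\in\R^{2m}_{>0}$ means all $2m\geq 6$ edges are present, so the parallelogram alternative is excluded. Finally $b_i=h_K(u_i)$ and $c_i=h_L(u_i)$ give $b=c$. If you simply cite the log-Minkowski inequality \emph{together with its equality case}, you are done.

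The gap is that you set out to prove Step~1, which carries the entire difficulty, and neither route you sketch produces the equality case. Differentiating the log-Brunn--Minkowski inequality at $\lambda=0$ gives only the inequality. Equality for the derivatives says nothing by itself about equality for the volumes. To transfer it, you would first need concavity of $g(\lambda)=\log\vol(Q_\lambda)$ on $[0,1]$. This does follow from log-Brunn--Minkowski applied to $Q_{\lambda_1},Q_{\lambda_2}$, since $h_{Q_\lambda}\leq h_K^{1-\lambda}h_L^{\lambda}$, and then $g'(0^+)=g(1)-g(0)$ forces equality for every $\lambda$. You would then need an independently proved equality case of log-Brunn--Minkowski. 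In \cite{boroczky2012log} that equality case is itself derived from the log-Minkowski one. An approximation-by-polygons argument of the kind you describe does not by itself yield it.

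Your minimization route is circular. The Lagrange condition only shows that a minimizer $L_0$ satisfies $V_{L_0}=V_K$ after normalization, i.e.\ that $L_0$ is another solution of the same even problem. The claim that it equals $K$ \emph{is} the uniqueness statement you are proving. Second-order information at $L_0$ only confirms that $L_0$ is a local minimizer; it cannot distinguish it from $K$, which is also a critical point. Symmetry has to enter globally, since without it uniqueness genuinely fails (Example~\ref{exp:pentagon}).

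One ingredient that would suffice is the following. Take the componentwise log-path $x(\lambda)=b^{1-\lambda}c^{\lambda}$ and set $X(\lambda)=(x(\lambda),x(\lambda))$. You need $\lambda\mapsto\log\vol P(U,X(\lambda))$ to be concave on $[0,1]$. This amounts to a local (second-variation) log-Brunn--Minkowski inequality for even perturbations at \emph{every} symmetric polygon on the path, with equality only in the dilation direction when the polygon is not a parallelogram. Given this, $\Phi(\lambda)=\sum_{i=1}^{2m}\gamma_i\log X_i(\lambda)-\frac{\|\gamma\|_1}{2}\log\vol P(U,X(\lambda))$ is convex. Its derivative vanishes at both endpoints because $(b,b)$ and $(c,c)$ solve the cone-volume equations (cf.\ Lemma~\ref{lem:parity-functional}(ii)), so $\Phi$ is constant. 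Strictness near $\lambda=0$ then gives $c\in\R_{>0}b$, hence $c=b$. Proving that inequality is the real content of \cite{boroczky2012log}; your sketch asserts it rather than establishing it.
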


Thus Theorem~\ref{thm:known-symmetric-plane} gives uniqueness within
the origin-symmetric part of the fiber. Without a symmetry or
centring assumption, however, uniqueness can fail even when the outer
unit normals are irreducible. Chen, Li and Zhu
\cite{ChenLiZhu2019} constructed such examples in every dimension.

\begin{oldtheorem}[\cite{ChenLiZhu2019}]
	\label{thm:known-chen-li-zhu}
	For every $n\geq2$ there exist an irreducible
	$U\in\U(n,2^n+1)$ and a vector $\gamma\in C_{\cv}(U)$ such that
	\[
	|S(U,\gamma)|\geq2.
	\]
\end{oldtheorem}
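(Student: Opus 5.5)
The plan is to produce, for each $n\geq 2$, an irreducible configuration with $2^n+1$ normals and two distinct support vectors $b\neq b'$ with $f_{\cv}^U(b)=f_{\cv}^U(b')\in\R^m_{>0}$. The natural starting point is the reducible cube configuration $U_0=(\pm e_1,\ldots,\pm e_n)$, which is far from unique. Its $2n$ normals are fewer than $2^n+1$, so I would instead take the $2^n$ normals of a cross-polytope-type arrangement, namely $u_\varepsilon=\varepsilon/\sqrt n$ for $\varepsilon\in\{\pm1\}^n$, and add one extra normal $u_0$. The polytope $P(U,b)$ is then a perturbed cross-polytope.

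In the first step I would choose $U$ so that an explicit one-parameter symmetry breaks into finitely many solutions. For example, take $u_0=e_1$, which is not of the form $\varepsilon/\sqrt n$. The vectors $u_\varepsilon$ already span, and the dual (polar) structure couples all coordinates, so irreducibility holds. I would check this directly: any splitting $\lin(U_1)\oplus\lin(U\setminus U_1)$ would have to separate the $u_\varepsilon$ into two groups spanning complementary subspaces, and this is impossible since any $n$ of them containing both $\varepsilon$ and a sign flip of one coordinate already span.

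In the second step I would reduce the count to a one- or two-parameter family by symmetry. Restrict to $b$ invariant under the group fixing $e_1$, that is, under sign changes of coordinates $2,\ldots,n$ and permutations of them. The support numbers then depend only on $\varepsilon_1$ and on $b_0$, giving parameters $(s_+,s_-,t)$. The cone volumes are $\gamma_+,\gamma_-$ (each repeated $2^{n-1}$ times) and $\gamma_0$. These are explicit piecewise-polynomial functions: when the facet $F_0$ truncates the vertex $x=ae_1$ of the cross-polytope, the volumes are computed from a cross-polytope minus a small pyramid. This gives a map $\R^3_{>0}\to\R^3$. The map is homogeneous of degree $n$, so after normalizing it is a map between two-dimensional sets.

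In the third step, which I expect to be the main obstacle, I would show that this reduced map is not injective. I would look for a fold. Compute the Jacobian along a curve, for example fixing $\gamma_0$ and tracking $\gamma_+/\gamma_-$ as the truncation depth varies, and exhibit a sign change of the derivative, or a parameter at which the map leaves and re-enters a level set. A concrete attempt is to compare a configuration where $F_0$ barely cuts the vertex $e_1$-tip with one where the body is shifted along $e_1$ by translating the origin. Translation changes $b_\varepsilon$ by $\langle u_\varepsilon,v\rangle$ and redistributes the cone volumes, and I would seek two translations $v=\tau e_1$ giving equal vectors. This reduces to showing that a scalar function $\tau\mapsto(\gamma_+/\gamma_-,\gamma_0/\gamma_-)$, adjusted by scaling the truncation, takes a value twice. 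That can be handled by the intermediate value theorem once the endpoint behaviours are computed, with the origin approaching the boundary at both ends.

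Finally, I would verify that both solutions have all facets of positive volume, so that $\gamma\in C_{\cv}(U)$. Since the two $b$'s are distinct, this gives $|S(U,\gamma)|\geq2$.
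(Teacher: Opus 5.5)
The paper gives no argument for this statement. It imports the example from \cite{ChenLiZhu2019} and only remarks that irreducibility can be read off from that construction. So your outline has to stand on its own, and its decisive step is missing.

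Your choice of $U$ and the symmetry reduction to the homogeneous map $(s_+,s_-,t)\mapsto(\gamma_+,\gamma_-,\gamma_0)$ are fine (irreducibility aside, see below). But non-injectivity of this reduced map is the entire content of the theorem, and you only list things you would try. The mechanisms you do name fail:
\begin{itemize}
\item Two translates of one polytope never have the same cone-volume vector, since $z\mapsto\bigl(\frac1n(h_P(u_i)-\langle u_i,z\rangle)\vol_{n-1}(F_i)\bigr)_i$ is injective because $U$ spans $\R^n$.
\item Along the curves you suggest (fix $\gamma_0$ relative to $\gamma_+$ or to $\gamma_-$ and follow the remaining ratio), that ratio runs from $0$ at one end to $+\infty$ at the other. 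Endpoint information plus the intermediate value theorem therefore yields nothing.
\end{itemize}
Concretely, for $n=2$ write the invariant polytope as $\{x:\ x_1+|x_2|\le 1,\ -x_1+|x_2|\le c,\ x_1\le t\}$ with $c>0$ and $\max\{0,\frac{1-c}{2}\}<t<1$. Then $(\gamma_+,\gamma_-,\gamma_0)=\bigl(\frac{2t-1+c}{4},\frac{c(1+c)}{4},t(1-t)\bigr)$, and on $\{\gamma_0=\gamma_+\}$ one gets $\gamma_-/\gamma_+=2-4t^2+\frac{1}{2t}$, which is strictly decreasing. Folds do exist, but only in a special regime that has to be located. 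After a symmetry reduction they cannot be taken for granted: in Example~\ref{exp:pentagon} all the non-uniqueness lies among non-symmetric solutions.

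The missing idea is that a small cone volume at $e_1$ can arise in two genuinely different ways: $F_0$ shrinks to the vertex, or $F_0$ stays large and passes through the origin. To make this precise in general dimension:
\begin{itemize}
\item Normalize $\sqrt n\,s_+=1$ and put $c=\sqrt n\,s_-$, $r=\frac{1+c}{2}$, $d=1-t$. Admissibility means $c>0$ and $0<d<\min\{1,r\}$.
\item Up to the common factor $1/n!$, one has $\gamma_+=r^{n-1}-d^{n-1}$, $\gamma_-=c\,r^{n-1}$ and $\gamma_0=2^{n-1}t\,d^{n-1}$.
\item The level set $\{\gamma_-/\gamma_+=R\}$ is the graph $d=r(1-c/R)^{1/(n-1)}$, $0<c<R$. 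On it, $\gamma_0/\gamma_+=2^{n-1}(1-d)(R-c)/c$.
\item If $R>\min_{c>1}\frac{c\,r^{n-1}}{r^{n-1}-1}$ (this minimum is $3+2\sqrt2$ for $n=2$), then $d>1$ somewhere on the graph. Hence the admissible part contains an interval $c\in(c_2,R)$ with $d\to1$ (i.e.\ $t\to0$) at $c_2$ and $d\to0$ at $R$.
\item On that interval $\gamma_0/\gamma_+$ is positive and tends to $0$ at both ends. So every sufficiently small value is attained at two distinct points.
\end{itemize}
Rescaling one of the two polytopes then gives two distinct elements of $S(U,\gamma)$ with all $2^n+1$ facets present. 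This, or an equivalent explicit computation, is what your third step needs.

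Your irreducibility argument is also wrong as written. It never uses $u_0$, yet for $n=2$ the four vectors $(\pm1,\pm1)/\sqrt2$ form a reducible configuration. Moreover, $n$ of the $u_\varepsilon$ containing some $\varepsilon$ and a one-coordinate flip of it need not span: take $(1,1,1),(-1,1,1),(-1,-1,-1)$. A correct argument uses $e_1$. If $\varepsilon'$ is $\varepsilon$ with the first sign flipped, then $\varepsilon-\varepsilon'=2\varepsilon_1e_1$, so $\{u_\varepsilon,u_{\varepsilon'},e_1\}$ is a circuit. A circuit cannot be split by a decomposition $\R^n=\lin(U_1)\oplus\lin(U\setminus U_1)$, so every $u_\varepsilon$ lies on the same side as $e_1$.
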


The irreducibility of $U$ is not explicitly part of the statement in
\cite{ChenLiZhu2019}, but follows from the construction.

The planar discrete problem was first studied by Stancu
\cite{Stancu2002,Stancu2003}, who approached it through homothetic
solutions of crystalline curvature flows. In \cite{Stancu2002} she
established the existence of solutions under suitable assumptions on the
data, and \cite{Stancu2003} is devoted to the number of solutions, which
is also the subject of the present paper. In the plane,
the situation is completely understood for quadrilaterals by the work
of Liu et al.~\cite{LiuLuSunEtAl2024}; see also
\cite{Subspace_Concentration_of_Geometric_Measures} for the trapezoid
case.

\begin{oldtheorem}[\cite{LiuLuSunEtAl2024}]
	\label{thm:known-quadrilaterals}
	Let $U\in\U(2,4)$ and $\gamma\in C_{\cv}(U)$. Then
	\begin{enumerate}
		\item $S(U,\gamma)$ is finite if and only if $U$ is irreducible;
		\item if $U$ is irreducible, then
		$
		|S(U,\gamma)|\in\{1,2,3\}$.
	\end{enumerate}
\end{oldtheorem}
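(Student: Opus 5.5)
The plan is to describe the fiber explicitly for $U=(u_1,\dots,u_4)\in\U(2,4)$. We parametrize quadrilaterals and trapezoid/triangle degenerations by the support vector $b$. Then we reduce the equation $f_{\cv}^U(b)=\gamma$ to a single polynomial equation in one variable, after quotienting out the obvious relations. Two facts simplify the setup. First, $\gamma$ is homogeneous of degree $2$ in $b$, which fixes the total area $\sum_i\gamma_i$. Second, translating $P(b)$ by $x$ changes $b$ to $b+U^\intercal x$. Since $\gamma\in\R^4_{>0}$, every $u_i$ must be a genuine edge normal. So $P(b)$ is a quadrilateral of fixed combinatorial type with fixed edge directions. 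Such quadrilaterals form a family of dimension $4$ (two translation parameters plus two shape parameters). The cone-volume condition then imposes four equations, only three of which are independent modulo the area.

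Step 1 (reducible case). If $U$ is reducible, then $U=\{\pm v,\pm w\}$ with $\R^2=\lin(v)\oplus\lin(w)$, and $P(b)$ is a parallelogram. As noted in the introduction, $t\,Q_1\oplus t^{-1}Q_2$ preserves $\gamma$. This produces a one-parameter family inside $S(U,\gamma)$, so the fiber is infinite.

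Step 2 (irreducible case, setup). If $U$ is irreducible, at most one pair of normals is antipodal, giving either a trapezoid or a general quadrilateral. Each cone volume $\gamma_i=\tfrac12 b_i\ell_i$ is expressed through the edge length $\ell_i$, and each $\ell_i$ is an affine function of $b$ given by $\ell_i=\sum_j c_{ij}b_j$. The coefficients $c_{ij}$ come from the sines of the angles between consecutive normals. The system $b_i\ell_i(b)=2\gamma_i$ consists of four quadrics in $\R^4$. We use three of them to eliminate three variables, or better, we parametrize by the position of one vertex (two coordinates) and solve. Concretely, place the vertex $x=F_1\cap F_2$ in the plane. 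Then $b_1=\langle u_1,x\rangle$ and $b_2=\langle u_2,x\rangle$. The prescribed $\gamma_1$ and $\gamma_2$ determine the lengths of edges $1$ and $2$, hence the two adjacent vertices. The remaining two edges are then determined, and the conditions on $\gamma_3$ and $\gamma_4$ give two equations in $x\in\R^2$. One of them can be solved linearly or quadratically, which leaves a single polynomial equation $p(s)=0$ in one variable $s$, of degree at most $3$ after clearing denominators.

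Step 3 (finiteness and the bound). We show that $p$ is not identically zero when $U$ is irreducible. It suffices to check the leading coefficient, which should be a nonzero product of sines of angles between non-antipodal normals. The product vanishes exactly in the parallelogram case. This gives finiteness and the bound $|S(U,\gamma)|\le3$. Existence ($|S|\ge1$) holds because $\gamma\in C_{\cv}(U)$.

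The main obstacle is the elimination in Step 2. We have to verify that it produces a polynomial of degree exactly three, and not higher, and that the root-to-solution correspondence is injective and respects the positivity constraints. If the direct elimination becomes messy, the first fallback is to normalize by an affine map. One can take $u_1,u_2$ to be coordinate directions, since $\mathrm{GL}$ maps preserve cone-volume ratios up to a constant factor. This reduces the number of angle parameters to two and makes the degree count transparent.
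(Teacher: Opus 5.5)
Step~1 is fine. The gap is in Steps~2--3, exactly at the point you call the main obstacle. The elimination does not produce a polynomial of degree at most three, and the bound $|S(U,\gamma)|\le 3$ is not a degree count. The paper does not reprove this theorem; it relies on Liu et al., whose mechanism is recorded in the appendix and is a \emph{quartic} $q_\gamma$. For $U$ in general position, $|S(U,\gamma)|$ is the number of roots of $q_\gamma$ in $(1,\infty)$ if $\sigma>0$ (in $(0,1)$ if $\sigma<0$). The bound three arises because $q_\gamma(0)=\gamma_3>0$ and $q_\gamma(1)=-(\rho-1)^2\gamma_4<0$ force one of the four roots into the inadmissible interval for every $\gamma$. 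That root cannot be divided out algebraically:
\begin{itemize}
\item each $f_i$ is even in $b$, and the system $f_1=\gamma_1,\dots,f_4=\gamma_4$ generically has four complex solution pairs $\pm b$;
\item $q_\gamma$ is irreducible for generic data; for instance, $\rho=2$, $\gamma=(1,2,3,4)$ gives $4X^4-26X^3+32X^2-17X+3$, which is irreducible over $\mathbb Q$.
\end{itemize}
The spurious root is separated from the admissible ones only by inequalities. What your plan is missing is a real-root localization that always discards one root. Your finiteness claim rests on the same uncomputed polynomial (``the leading coefficient should be a nonzero product of sines''). Trapezoids also need separate treatment: the two non-parallel edges can shrink simultaneously while the polygon degenerates to a segment of area zero, and the count is then at most two.

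One way to carry out your plan correctly is to parametrize by edge lengths rather than by a vertex. The edge-length vectors form the two-dimensional space $L=\{\ell\in\R^4:\sum_i\ell_iu_i=0\}$. Via $\ell_i=2\gamma_i/b_i$, the elements $b\in S(U,\gamma)$ correspond bijectively to the critical points of
\[
g(\ell)=\sum_i\gamma_i\log|\ell_i|-\tfrac12\|\gamma\|_1\log|Q(\ell)|
\]
on the arc $I\subset\mathbb P(L)$ of positive vectors. Here $g$ is homogeneous of degree zero, and $Q$ is the area form, which has signature $(1,1)$ by Minkowski's inequality. The form $dg$ has six simple poles on $\mathbb P(L)$: residues $\gamma_i$ at the four triangle degenerations $\ell_i=0$, and $-\tfrac12\|\gamma\|_1$ at the two null directions of $Q$. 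Hence $dg$ has exactly four zeros. In general position exactly two of the triples $U\setminus\{u_i\}$ positively span $\R^2$, and these two genuine triangles bound $I$. The other two degenerations have $Q<0$ and lie in the arc $\{Q<0\}$ with no pole between them. Since $g\to-\infty$ at both of them, $g$ has a critical point between them, outside $I$. This gives finiteness, existence, and at most three solutions. For trapezoids with $u_1=-u_3$, the poles $\ell_2=0$, $\ell_4=0$ and one null direction of $Q$ coincide. That leaves at most four poles, hence at most two zeros.
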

 More precise
criteria distinguishing the possible numbers of solutions in
Theorem~\ref{thm:known-quadrilaterals} will be recalled in
Section~\ref{sec:planar}.

We note that \cite[Theorem~1.3]{Stancu2003} also asserts uniqueness
without a symmetry assumption, in particular whenever $m\geq4$ and the
vectors of $U$ are pairwise linearly independent. This does not hold in
that generality: the quadrilaterals in
Theorem~\ref{thm:known-quadrilaterals}, Example~\ref{exp:pentagon} and
Theorem~\ref{thm:1.7} provide counterexamples. The same applies to the
dichotomy in \cite[Section~3]{Stancu2003}, according to which the
problem has either exactly one or uncountably many solutions.

We believe that the first part of Theorem~\ref{thm:known-quadrilaterals} holds true in general.

\begin{conjecture}
	\label{conj:irreduc}
	Let $U\in\U(n,m)$ and $\gamma\in C_{\cv}(U)$. Then 
	$S(U,\gamma)$ is finite if and only if $U$ is irreducible.
\end{conjecture}

This conjecture is closely related to the dimension conjecture for
cone-volume fibers formulated in
\cite[Conjecture 5.2]{baumbach2025polynomialinequalitiesconevolumespolytopes}.
In fact, the two questions are equivalent in the following sense.

\begin{proposition}
	\label{thm:finiteness-dimension}
	Let $U\in\U(n,m)$ have irreducible decomposition
	\[
	U=S_1\cup\cdots\cup S_k.
	\]
	Then the following are equivalent:
	\begin{enumerate}
		\item for every $j=1,\ldots,k$ and every
		$\eta\in C_{\cv}(S_j)$, the fiber $S(S_j,\eta)$ is finite;
		
		\item for every $\gamma\in C_{\cv}(U)$,
		\[
		\dim^* S(U,\gamma)=k-1.
		\]
	\end{enumerate}
\end{proposition}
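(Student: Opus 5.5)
The plan is to reduce everything to a product structure of the cone-volume map along the irreducible decomposition. Write $U=S_1\cup\cdots\cup S_k$ with $\R^n=\lin(S_1)\oplus\cdots\oplus\lin(S_k)$, set $n_j=\dim\lin(S_j)$, and split $b=(b^{(1)},\ldots,b^{(k)})$ according to this decomposition. After a linear change of coordinates adapted to the direct sum, $P(U,b)$ is the direct sum $Q_1\oplus\cdots\oplus Q_k$ with $Q_j=P(S_j,b^{(j)})\subset\lin(S_j)$. Since the decomposition is not orthogonal, the normals must be read in the appropriate dual coordinates, with $S_j$ regarded as a tuple in $\U(n_j,|S_j|)$ after a linear normalisation. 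The cone-volume map changes only by fixed positive factors under such linear maps, so fiber cardinalities and dimensions are unaffected.

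The basic computation is for a face $F_i$ with $u_i\in S_j$. Such a face has the form $F_i'\oplus\bigoplus_{l\neq j}Q_l$, where $F_i'$ is the corresponding facet of $Q_j$. Up to a constant depending only on $U$, its cone volume is therefore
\[
\gamma_i(b)=c\,\gamma_i(S_j,b^{(j)})\prod_{l\neq j}\vol_{n_l}(Q_l)\cdot\frac{n_j}{n}.
\]
Writing $V_l=\vol(Q_l)$, the $S_j$-block of $\gamma$ equals $\gamma(S_j,b^{(j)})$ multiplied by the scalar $\lambda_j=\frac{n_j}{n}c\prod_{l\ne j}V_l$. Summing the block gives $\sum_{i\in S_j}\gamma_i=\frac{n_j}{n}c\prod_l V_l$, so the block sums are determined by $\gamma$.

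From this one reads off the fiber. If $b\in S(U,\gamma)$, then for each $j$, $b^{(j)}$ lies in $S(S_j,\eta_j)$ with $\eta_j=\gamma^{(j)}/\lambda_j$. The scalars $\lambda_j$ are determined by $\gamma$ together with the single quantity $\prod_l V_l$, which itself is fixed by $\gamma$. Hence $\eta_j$ is determined up to the homogeneity rescaling $b^{(j)}\mapsto t_jb^{(j)}$ with $\prod_j t_j^{n_j}=1$, under which $\eta_j$ scales by $t_j^{n_j}$. Normalising each $\eta_j$ to total mass, say $1$, I expect a semialgebraic bijection
\[
S(U,\gamma)\;\cong\;\Bigl\{t\in\R^k_{>0}:\textstyle\prod_j t_j^{n_j}=1\Bigr\}\times\prod_{j=1}^k S(S_j,\bar\eta_j),
\]
where $\bar\eta_j$ is the normalised block of $\gamma$. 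Conversely, any choice of factors gives an element of the fiber, because the cone-volume map is $n_j$-homogeneous on each block. The first factor is a $(k-1)$-dimensional manifold.

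For (1)$\Rightarrow$(2): if every fiber $S(S_j,\eta)$ is finite, then $S(U,\gamma)$ is a finite union of copies of a $(k-1)$-dimensional set, so $\dim^*S(U,\gamma)=k-1$.

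For (2)$\Rightarrow$(1): take $j$ and $\eta\in C_{\cv}(S_j)$, fix some $b^{(l)}$ for $l\ne j$, pick $b^{(j)}\in S(S_j,\eta)$, and let $\gamma=f_{\cv}^U(b)$, which lies in $C_{\cv}(U)$. Then the block $\bar\eta_j$ is a rescaling of $\eta$. By the bijection, $\dim S(U,\gamma)\ge k-1+\dim S(S_j,\bar\eta_j)$. Since a semialgebraic set is finite if and only if it has dimension $0$, and $S(S_j,\eta)$ is homeomorphic to $S(S_j,\bar\eta_j)$ via scaling, condition (2) forces finiteness. I would also use the dimension additivity of semialgebraic products.

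The main obstacle is the first step: making the product formula for the cone-volume vector rigorous when the subspaces $\lin(S_j)$ are not orthogonal. This requires identifying the right coordinates and checking that the face of $P(U,b)$ with normal $u_i\in S_j$ really is the direct sum of a facet of $Q_j$ with the other $Q_l$, including the cases where some $F_i$ are lower-dimensional or empty. I would handle it by applying a linear map $A$ making the subspaces orthogonal, using $P(U,b)\mapsto AP(U,b)=P(U',b')$ with the normals transformed by $A^{-\intercal}$ and renormalised, and the fact that $\gamma_i$ transforms by the factor $|\det A|$, independently of $b$.
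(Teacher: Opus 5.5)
Your proposal is correct and follows essentially the paper's route: make the $\lin(S_j)$ orthogonal via linear invariance, write $P(U,b)$ as a direct sum so that each block of $\gamma$ is a rescaled $\gamma(S_j,b^{(j)})$, build the semialgebraic bijection $T\times\prod_j S(S_j,\bar\eta_j)\to S(U,\gamma)$ with $\bar\eta_j=\gamma_{S_j}/\|\gamma_{S_j}\|_1$ (the paper's $\delta^{(j)}$), and conclude both directions from additivity of $\dim^*$ under products. Three small points to tidy: $\lambda_j$ involves $\prod_{l\neq j}V_l$, so it depends on $V_j$ and not only on $\prod_l V_l$; the level set defining $T$ must be $\prod_j t_j^{n_j}=\text{const}$ with the constant fixed by $\|\gamma\|_1$ (or equal to $1$ after normalizing $\|\gamma\|_1=1$, as the paper does); and in (2)$\Rightarrow$(1) the auxiliary $b^{(l)}$, $l\neq j$, must have all cone volumes positive so that $\gamma\in C_{\cv}(U)$.
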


Here $\dim^* X$ denotes semialgebraic dimension; see end of Section \ref{sec:preliminaries}. In particular, a
semialgebraic set has semialgebraic dimension zero if and only if it is finite.

Our first main non-uniqueness result shows that, in the plane, no
condition on the outer unit normals alone can guarantee uniqueness
once at least four normals are prescribed.

\begin{theorem}
	\label{thm:1.7}
	Let $m\geq4$ and $U\in\U(2,m)$. Then there exists
	$\gamma\in C_{\cv}(U)$ such that
	\[
	|S(U,\gamma)|\geq2.
	\]
\end{theorem}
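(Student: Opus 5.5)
We prove Theorem~\ref{thm:1.7} by reducing to the quadrilateral case, Theorem~\ref{thm:known-quadrilaterals}, combined with a perturbation argument. If $U$ is reducible, then for $n=2$ both factors are one-dimensional. The family $t Q_1\oplus t^{-1}Q_2$ already gives infinitely many elements of $S(U,\gamma)$ for every $\gamma\in C_{\cv}(U)$, so we may assume $U$ is irreducible.

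\textbf{Step 1: choose four normals.} For irreducible $U\in\U(2,m)$ with $m\ge4$, we first select four normals $u_{i_1},\dots,u_{i_4}$ such that $U'=(u_{i_1},\dots,u_{i_4})\in\U(2,4)$ is irreducible and admits a $\gamma'$ with $|S(U',\gamma')|\ge2$. For this we use the precise criteria of \cite{LiuLuSunEtAl2024}, recalled in Section~\ref{sec:planar}, which show that every irreducible quadrilateral normal configuration has some $\gamma'$ with two or three solutions. One could instead check directly that $f_{\cv}^{U'}$, restricted to the $3$-dimensional slice $\vol=1$, is not injective. A degree or fold argument should do this: the map is proper onto its image cone and has a critical point where the polygon degenerates to a triangle.

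\textbf{Step 2: realise the quadrilaterals with all $m$ normals.} Let $b^{(1)}\neq b^{(2)}\in S(U',\gamma')$. We view the corresponding quadrilaterals as $m$-gons in which the remaining normals $u_j$ give degenerate faces (vertices), that is, with support numbers $b_j=h_{P(U',b^{(k)})}(u_j)$. We then cut off tiny caps in the directions $u_j$ by replacing these $b_j$ with $b_j-\varepsilon_j$. This yields $b^{(k)}(\varepsilon)\in\R^m_{>0}$ whose cone-volume vectors have all entries positive, and they depend continuously on $\varepsilon$.

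\textbf{Step 3: restore a common cone-volume vector.} We must make the two perturbed images coincide, and this is the main obstacle. The plan is to use an implicit-function argument at a regular solution. By Step 1 we may choose $\gamma'$ so that $b^{(1)},b^{(2)}$ are nondegenerate points of the fiber, that is, points where the Jacobian of $f_{\cv}^{U'}$ is invertible; the solution counts in Theorem~\ref{thm:known-quadrilaterals} are locally constant off a discriminant set. For a target $\gamma(\varepsilon)$ close to $(\gamma',0,\dots,0)$, we then solve for the four coordinates $b_{i_1},\dots,b_{i_4}$ near each $b^{(k)}$. The small coordinates $\gamma_j$, $j\notin\{i_1,\dots,i_4\}$, are matched by adjusting the cut depths $\varepsilon_j$: the cap volume is monotone in the depth. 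Because each cap is tiny, it only perturbs the four main cone-volumes continuously. We set this up as a fixed-point or continuity problem in the combined variables and apply a degree/IFT argument near each $b^{(k)}$ separately. The result is two distinct elements of $S(U,\gamma(\varepsilon))$ for the same $\gamma(\varepsilon)\in C_{\cv}(U)$.

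The delicate point is that the cap volume behaves like $\varepsilon_j^2$ times a factor depending on the vertex angle. The map $\varepsilon_j\mapsto\gamma_j$ is therefore not smooth-invertible at $0$, so we should parametrise by $\gamma_j$ via $\varepsilon_j\sim\sqrt{\gamma_j}$, which is continuous. We then need a continuity-based (Brouwer degree) version of the implicit function theorem rather than the smooth one. Distinctness for small perturbations follows because the two solutions stay close to $b^{(1)}\ne b^{(2)}$.
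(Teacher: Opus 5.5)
Your overall plan is the paper's plan: seed non-uniqueness on an irreducible quadrilateral subtuple at regular solutions, view those quadrilaterals as $m$-gons in which the other normals give degenerate edges, then perturb. However, there is a gap at Step 3, which you yourself flag as the main obstacle. It is not carried out, and your reason for abandoning the smooth implicit function theorem is false.

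The coordinate $\gamma_j$ is the cone-volume $\frac12 b_j\ell_j$ of the new edge. Here $b_j$ stays close to $h_P(u_j)>0$, and the edge length $\ell_j$ is linear in the cut depth. In the paper's normalization $u_m=e_1$, $u_1=e_2$, $v=\alpha e_1+\beta e_2$, the new edge has length $|q|/(\alpha\beta)$ with $q=b_{m+1}-\alpha b_m-\beta b_1$. Hence $\gamma_{m+1}=-\frac{b_{m+1}}{2\alpha\beta}\,q$, which is linear in the depth with nonzero slope. The $\varepsilon_j^2$ quantity you have in mind is the area of the removed cap, which is not a coordinate of $\gamma$.

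So the reparametrization $\varepsilon_j\sim\sqrt{\gamma_j}$ is wrong. Your substitute, an unspecified degree-theoretic implicit function theorem, also comes without any of the ingredients it would need:
\begin{enumerate}
\item a nonzero local degree of the coupled map;
\item uniform control of the coupling on a small sphere;
\item solvability of the cap equations when several new normals lie at the same vertex of the quadrilateral. There the new edges are adjacent to each other, so monotonicity of each cap in its own depth does not decouple them.
\end{enumerate}

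The missing idea is that nothing non-smooth happens if you use the support numbers themselves. The proper-type-cone polynomial $F=f_{\cv}^{A_+^*}$ is defined across the wall $q=0$. It satisfies $F(b)=(f(\bar b),0)+q(b)a(b)$ and $F(\widehat b^{(k)})=(\gamma,0)$. Its Jacobian there is block triangular, with $\det DF(\widehat b^{(k)})=-\frac{\widehat b^{(k)}_{m+1}}{2\alpha\beta}\det Df_{\cv}^U(b^{(k)})\neq0$. The ordinary inverse function theorem then gives disjoint inverse branches near each $\widehat b^{(k)}$. The observation that $b>0$ together with $F(b)>0$ forces $b\in\inter A_+^*$ shows that these polynomial solutions are genuine elements of $S(U^+,\gamma^+)$.

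The paper inserts one normal at a time and chooses $\gamma^+\notin B(U^+)$ at each stage. This keeps all solutions regular for the next insertion and avoids interacting caps entirely. An all-at-once version can also be made to work along these lines. In coordinates $(b_{\mathrm{old}},q)$ the Jacobian is again block triangular. The linear map from the offsets $q$ of the lines inserted at one vertex to their signed edge lengths is injective, because vanishing edge lengths force all those lines through the vertex.

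A smaller point on Step 1: Theorem~\ref{thm:known-quadrilaterals} by itself only gives $|S(U',\gamma')|\le 3$. An open set of $\gamma'$ with at least two solutions must be extracted from the detailed classification of Liu et al.\ in general position and from Pollehn's result for trapezoids, as in Theorem~\ref{thm:4goncase}.
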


In particular, non-uniqueness is not restricted to exceptional
configurations of outer unit normals. Our proof is based on a
stability argument showing that non-uniqueness can be preserved when
additional outer unit normals are inserted. Iterating this
construction and passing to a limit yields a smooth counterpart.

\begin{corollary}
	\label{thm:1.8}
	There exist two distinct smooth convex bodies
	$K_1,K_2\subset\R^2$ such that
	\[
	V_{K_1}=V_{K_2}.
	\]
\end{corollary}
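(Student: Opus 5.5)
Corollary~\ref{thm:1.8} will be obtained from Theorem~\ref{thm:1.7} by an inductive construction with a limit, as indicated after that theorem. We fix a dense sequence of unit vectors $v_1,v_2,\ldots$ in $\sph^1$. We start with some $U_0\in\U(2,m_0)$, $m_0\ge4$, and two distinct solutions $b^{(0)}\neq c^{(0)}$ with the same cone-volume vector $\gamma_0\in C_{\cv}(U_0)$, as provided by Theorem~\ref{thm:1.7}. The key tool is a stability lemma, which is the heart of the proof of Theorem~\ref{thm:1.7}. Suppose $P(U,b)$ and $P(U,c)$ are distinct with $\gamma(U,b)=\gamma(U,c)$, and suppose the fiber is \emph{nondegenerate} at $b$ and $c$, meaning that the differential of $f_{\cv}^U$ restricted to the transversal of the scaling/volume normalisation is invertible. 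Then for a new normal $v$ and small $\varepsilon>0$ we can cut both polygons by a facet in direction $v$ with cone volume of order $\varepsilon$. By the implicit function theorem we then find $b',c'$ for $U'=U\cup\{v\}$ with $\gamma(U',b')=\gamma(U',c')$. Moreover $P(U',b')$ is Hausdorff-close to $P(U,b)$, and likewise for $c$. The perturbation is $O(\varepsilon)$ and can be made as small as we like.

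Iterating, we obtain polygons $P_k=P(U_k,b^{(k)})$ and $Q_k=P(U_k,c^{(k)})$ with equal cone-volume measures, $U_k\supset\{v_1,\ldots,v_k\}$. At step $k$ we choose $\varepsilon_k$ so small that $\delta(P_k,P_{k-1})<2^{-k}\eta$ and $\delta(Q_k,Q_{k-1})<2^{-k}\eta$, where $\eta<\delta(P_0,Q_0)/3$ and $\delta$ is the Hausdorff distance. Then $P_k\to K_1$ and $Q_k\to K_2$ in the Hausdorff metric, with $\delta(K_1,K_2)\ge\eta>0$, so $K_1\ne K_2$. Both limits contain a fixed disc around the origin, since all perturbations are small. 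Weak continuity of cone-volume measures under Hausdorff convergence (for bodies with the origin in the interior) gives $V_{K_1}=\lim V_{P_k}=\lim V_{Q_k}=V_{K_2}$.

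The main obstacle is smoothness. We interpret it as $C^1$ boundary or strict convexity, and ideally both, by also controlling the construction. The cut-off facets must shrink: we require that every edge of $P_k$ present at stage $j$ gets subdivided by later normals. Density of $\{v_j\}$ ensures that the limit has no segments in its boundary, and so is strictly convex, provided the edge lengths in the limit tend to zero. For the $C^1$ property we need that the limit has no corners. A corner would correspond to an interval of normals where $V_{K}$ has no mass, i.e.\ $S_{K}$ vanishes on an open arc. To exclude this we keep every new facet's cone volume bounded below proportionally, for instance by choosing target measures $\gamma_k$ converging to a measure with positive continuous density. Then $V_{K_i}$ has full support, so $K_i$ has no corner; in the plane, full support of $S_K$ is equivalent to $C^1$ smoothness. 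Arranging this quantitative lower bound while keeping the implicit-function perturbation small, which means preserving nondegeneracy uniformly at each step, is the delicate part. I would try first to carry along an explicit lower bound for the determinant of the Jacobian through the iteration.
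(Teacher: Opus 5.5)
\textbf{Gap: smoothness is not established.} Your construction of the two limit bodies matches the paper's proof: insert normals from a dense sequence, control the Hausdorff distances geometrically, keep a common inner disc, and use weak continuity of $V_K$. Your criterion is also correct: in the plane, no corner is equivalent to $V_K$ having full support.

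The smoothness part, however, is the real content of the corollary, and it is not proved. The mechanism you propose also does not fit the method. The insertion step only produces targets in a small neighbourhood of $(\gamma^{(k)},0)$: the new normal receives a tiny cone volume and the old entries move only slightly. So you cannot steer $\gamma_k$ towards a prescribed measure with positive continuous density. The uniform Jacobian bound you hope to carry along is neither available nor needed.

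Strict convexity is likewise neither obtainable this way nor required; here ``smooth'' means a unique outer normal at every boundary point. Inserting a normal between two adjacent ones truncates a vertex; it does not subdivide an edge. The limit bodies in fact have edges in a dense set of directions. Your two goals, ``edge lengths tend to zero'' and ``cone volumes bounded below'', therefore pull in opposite directions.

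\textbf{Missing idea.} No uniformity is needed, because Theorem~\ref{thm:inserting-normal} allows an arbitrary $\delta>0$ at each step, chosen after step $k$ is known.
\begin{itemize}
\item Require $\tau_k:=\|\gamma^{(k+1)}-(\gamma^{(k)},0)\|_\infty<\frac13\min_{u\in U_k}\gamma^{(k)}_u$.
\item The new entry satisfies $\gamma^{(k+1)}_{v_{k+1}}\le\tau_k$, so the rule forces $\tau_{k+1}<\tau_k/3$.
\item Hence $\sum_{k\ge r}\tau_k<\frac12\gamma^{(r)}_{v_r}$, and so $\gamma^{(k)}_{v_r}>\frac12\gamma^{(r)}_{v_r}$ for all $k\ge r$.
\item The Portmanteau inequality for the closed set $\{v_r\}$ gives $V_{K_j}(\{v_r\})\ge\limsup_k\gamma^{(k)}_{v_r}>0$ for every $r$.
\item A corner $x$ of $K_j$ has a two-dimensional normal cone $N_{K_j}(x)$. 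By density, $\relint N_{K_j}(x)$ contains some $v_r$, which forces $V_{K_j}(\{v_r\})=0$, a contradiction.
\end{itemize}
This realises your full-support criterion through atoms on a dense set rather than through a continuous density.

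\textbf{Starting point and nondegeneracy.} You must start from an irreducible $U_0$ and $\gamma_0\notin B(U_0)$, as in Theorem~\ref{thm:4goncase}. Theorem~\ref{thm:1.7} as stated is not enough, since for reducible $U$ the solutions are degenerate. At each step you can then pick $\gamma^{(k+1)}\notin B(U_{k+1})$, which is possible because this set has measure zero. That keeps nondegeneracy pointwise, with no uniform bound.
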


Non-uniqueness for smooth data in the plane is known. Written in terms of
the support function, the planar logarithmic Minkowski problem is the
equation $h(h''+h)=f$ on $\sph^1$, whose solutions are precisely the
self-similar shrinking curves of the anisotropic curvature flow with
interfacial energy $f$. For a suitable non-symmetric energy, Yagisita
\cite{Yagisita2006} constructed two such curves, and thereby two distinct
smooth convex bodies with the same cone-volume measure. In
Corollary~\ref{thm:1.8}, smoothness is to be understood in the weaker
sense that every boundary point of $K_1$ and $K_2$ has a unique outer unit
normal; in fact the construction produces bodies whose cone-volume
measures have an atom at every direction of a dense subset of $\sph^1$.
Its point is rather that non-uniqueness for smooth bodies can be reached
from the discrete setting alone, by iterating the insertion argument of
Theorem~\ref{thm:inserting-normal}.

Our second main result concerns finiteness. Although the fibers need
not be singletons, for irreducible $U$ they are finite for almost
every cone-volume vector, with a bound depending only on $n$ and $m$.

\begin{theorem}
	\label{thm:generic-finiteness-intro}
	Let $U\in\U(n,m)$ be irreducible. There exists a set
	$B(U)\subset\R^m_{\geq0}$ of Lebesgue measure zero such that, for every
	$\gamma\in C_{\cv}(U)\setminus B(U)$,
	\[
	|S(U,\gamma)|
	<
	(2mn)^m m^{2mn}.
	\]
\end{theorem}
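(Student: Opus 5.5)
Our plan is to decompose the domain $\R^m_{>0}$ into finitely many pieces on which the cone-volume map is a polynomial map. We then combine a Bézout-type count with Sard's theorem, and use irreducibility to exclude positive-dimensional fibers generically.

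\emph{Step 1 (polynomial pieces).} The combinatorial type of $P(U,b)$, meaning which $F_i(b)$ are facets and which vertices arise from which $n$-subsets of normals, is constant on the cells of the hyperplane arrangement in $b$-space. This arrangement consists of the hyperplanes expressing that $n+1$ of the constraints are simultaneously tight: for each $(n+1)$-subset $J$ of $\{1,\ldots,m\}$ with $u_J$ spanning, the determinant condition $\det\begin{pmatrix}U_J^\intercal & b_J\end{pmatrix}=0$ is linear in $b$. The number of such hyperplanes is at most $\binom{m}{n+1}\le m^{n+1}$. The number of open cells is then at most $\sum_{i\le m}\binom{m^{n+1}}{i}\le m^{(n+1)m}$, which is comfortably below $m^{2mn}$. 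On each open cell $\Omega$, every vertex is a linear function of $b$. Hence $\vol_{n-1}(F_i(b))$ is a homogeneous polynomial of degree $n-1$, obtained by triangulating the facet with a fixed combinatorial triangulation, and $\gamma_i(b)=\tfrac{b_i}{n}\vol_{n-1}(F_i(b))$ is a homogeneous polynomial $g^\Omega_i$ of degree $n$.

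\emph{Step 2 (exceptional set).} Define $B(U)$ as the union of two sets. The first is the image under $f^U_{\cv}$ of the lower-dimensional union of cell walls; it is contained in a finite union of images of polynomial maps from $(m-1)$-dimensional sets. The second is the set of critical values of each $g^\Omega|_\Omega$. By Sard's theorem, together with the fact that locally Lipschitz images of null sets are null, $B(U)$ has Lebesgue measure zero. Now take $\gamma\in C_{\cv}(U)\setminus B(U)$. Every $b\in S(U,\gamma)$ lies in some open cell $\Omega$, and there $Dg^\Omega(b)$ is invertible. By the inverse function theorem, $S(U,\gamma)\cap\Omega$ is a discrete semialgebraic set, hence finite. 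Note that invertibility of the Jacobian is exactly where irreducibility matters. For reducible $U$, the map $b\mapsto \gamma(b)$ is invariant under the one-parameter scaling $t^{n_2}Q_1\oplus t^{-n_1}Q_2$, so its Jacobian is singular everywhere and every value would be critical. For irreducible $U$, we must show that $\det Dg^\Omega\not\equiv0$ on at least those cells whose image has positive measure. Cells with identically vanishing Jacobian have image of dimension less than $m$, so they can simply be absorbed into $B(U)$. Thus irreducibility is not even needed for this step; it only ensures that $C_{\cv}(U)\setminus B(U)$ is nonempty, which we do not actually need for the stated bound.

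\emph{Step 3 (counting).} On each cell, the nondegenerate solutions of the square polynomial system $g^\Omega(b)=\gamma$ in $m$ unknowns, each of degree $n$, number at most $n^m$ by Bézout's theorem. This holds because nondegenerate real solutions are isolated points of the complex variety. Summing over cells gives
\[
|S(U,\gamma)|\le n^m\, m^{(n+1)m}<(2mn)^m m^{2mn}.
\]
The main obstacle is Step 1: making sure that the facet volumes really are single polynomials on each cell. This requires a uniform combinatorial triangulation of each facet, including facets that degenerate only on the walls. We also need the cell count to be rigorous, and if our count is too crude, the generous slack in $(2m)^m m^{2mn}$ leaves room to absorb it.
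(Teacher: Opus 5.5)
Your proof is correct and has the same skeleton as the paper's. The paper also works type-cone by type-cone; the polynomiality you flag as the main obstacle is exactly Lemma~\ref{res:type_cones_and_volume_polynomials}, which it simply cites. It takes $B(U)=f_{\cv}^U(\R^m_{>0}\setminus W(U))$, the image of the walls, of the zero sets of ${\det}^A$, and of the cones with ${\det}^A\equiv0$, all null by Sard. It uses the inverse function theorem to make every solution an isolated nondegenerate zero. It bounds the number of type-cones by the Schl\"afli count for at most $\binom{m}{n+1}$ hyperplanes.

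The genuine difference is the per-cell count. The paper stays over $\R$: an isolated point of $\mathcal V(f^A_1-\gamma_1,\ldots,f^A_m-\gamma_m)$ is a connected component, so the Thom--Milnor bound gives at most $n(2n-1)^{m-1}$ solutions per type-cone, with no assumption on the rest of the variety. You pass to $\mathbb C^m$ and obtain the sharper $n^m$, but the classical B\'ezout theorem is not enough there. The set $\mathcal V_{\mathbb C}(g^\Omega-\gamma)$ may well have positive-dimensional components (real ones outside $\Omega$, non-real ones, or at infinity after homogenizing). What you need is the refined B\'ezout inequality (Fulton's excess-intersection version, or Heintz's affine B\'ezout inequality). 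It bounds the number of isolated points of $\mathcal V_{\mathbb C}(p_1,\ldots,p_m)$ by $\prod_i\deg p_i$ regardless of the other components. Your remark that a real zero with invertible Jacobian is isolated in the complex zero set, by the holomorphic inverse function theorem applied to the same matrix, is precisely the input this requires.

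So the paper's real-topological route avoids excess intersection theory at the cost of a slightly worse constant. Your route yields the better estimate $|S(U,\gamma)|\le n^m\sum_{i\le m}\binom{\binom{m}{n+1}}{i}$. Using arrangement cells instead of type-cones only enlarges $B(U)$ by a null set, which is harmless because the statement only asks for some null set.
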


Consequently, Conjecture~\ref{conj:irreduc} holds for almost every
cone-volume vector. The proof of
Theorem~\ref{thm:generic-finiteness-intro} yields, more precisely, the
sharper estimate
\[
|S(U,\gamma)\cap A|
\leq n(2n-1)^{m-1}
\]
on each proper type-cone $A$; see
Section~\ref{sec:irreducible}. 

More could be said if $U$ is in general position. In this case, Theorem~\ref{thm:con} shows that the cardinality
$|S(U,\gamma)|$ is constant on every pathwise connected component of
$C_{\cv}(U)\setminus B(U)$. In addition, the cardinality is odd.

\begin{theorem}
	\label{thm:parity}
	Let $U\in\U(n,m)$ be in general position and 
	$\gamma\in C_{\cv}(U)\setminus B(U)$. Then $|S(U,\gamma)|$ is odd.
\end{theorem}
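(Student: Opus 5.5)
The plan is a degree argument for the cone-volume map, restricted to a suitable domain where it is proper. Since $U$ is in general position, it is irreducible, so Theorem~\ref{thm:generic-finiteness-intro} and Theorem~\ref{thm:con} apply. Thus $|S(U,\gamma)|$ is finite and constant on each path component of $C_{\cv}(U)\setminus B(U)$. It therefore suffices to show two things: that this count is congruent mod $2$ to a globally constant quantity, the mod-$2$ degree, and that the count is odd at one conveniently chosen $\gamma$.

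\textbf{Step 1: normalization and domain.} The map $f_{\cv}^U$ is homogeneous of degree $n$ in $b$. In general position every $b$ with $P(b)$ simple gives a polytope whose facets are all present, or else some are degenerate. We restrict to the open set $D=\{b: \gamma(b)\in\R^m_{>0}\}$ and normalize to the slices $\{\sum\gamma_i=1\}$ and $\{\vol P(b)=1\}$. Because $f_{\cv}^U$ is a gradient-type map ($\gamma_i=\frac{b_i}{n}\partial_{b_i}\vol P(b)$), its Jacobian on type-cones is polynomial. We take $B(U)$ to contain the critical values and the images of the lower-dimensional type-cone boundaries, so that off $B(U)$ every preimage is a regular point in the interior of a full type-cone.

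\textbf{Step 2: properness.} We show that $f:D\cap\{\vol=1\}\to C_{\cv}(U)\cap\{\sum\gamma_i=1\}$ is proper. Suppose $b_k$ leaves every compact set while $\gamma(b_k)\to\gamma\in C_{\cv}(U)$. Then either some facet degenerates, so some $\gamma_i\to0$, or the polytopes become unbounded in shape while keeping volume $1$. In the second case a subspace concentration phenomenon occurs: the limiting cone-volume mass would concentrate on the normals of a proper subspace. General position forbids $\gamma$ in the open cone $C_{\cv}(U)$ from having this, since in general position the subspace-concentration inequalities are strict on $C_{\cv}(U)$; this is the key input. We then check that $C_{\cv}(U)$ is connected; it is a cone described by those strict inequalities, hence convex. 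Consequently the mod-$2$ degree of $f$ is well defined and equals $|S(U,\gamma)|\bmod 2$ for every regular value. This makes the count odd everywhere once it is odd at one point.

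\textbf{Step 3: evaluation at one point.} We pick $b^0$ with all $b^0_i$ large relative to a fixed generic choice. A cleaner choice is a value $\gamma^0$ for which the solution is provably unique and nondegenerate. One option is to use a centred solution: take $\gamma^0$ equal to the cone-volume vector of the polytope minimizing the functional $\Phi(b)=\sum\gamma^0_i\log b_i$ subject to $\vol P(b)=1$. By log-concavity of $\vol^{1/n}$ in suitable variables, this yields uniqueness near the minimizer. Alternatively, we count critical points of $\Phi$ restricted to $\{\vol=1\}$. Solutions of $S(U,\gamma)$ are exactly these critical points, so Morse theory on the ball-like manifold $\{\vol P(b)=1\}$ gives $\sum(-1)^{\text{index}}=\chi=1$. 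Hence the number of critical points is odd for Morse $\Phi$, that is, for $\gamma\notin B(U)$. This route in fact bypasses Steps 2 and 3 if properness, meaning $\Phi\to\infty$ at the boundary, holds.

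\textbf{Main obstacle.} The main obstacle is the boundary behaviour needed for properness, which is also the coercivity of $\Phi$. We must show that $\Phi$ blows up as $b$ approaches degenerate configurations where facets vanish or the shape becomes unbounded, using $\gamma\in C_{\cv}(U)$ and general position. We also have to handle non-smoothness across type-cone walls, showing that $\{\vol=1\}$ is a topological manifold homeomorphic to $\R^{m-1}$. Once that is in place, the Euler characteristic count is routine.
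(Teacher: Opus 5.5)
There is a genuine gap. The proposal never determines the parity at any single $\gamma$, and both routes you offer for doing so fail for the same reason.

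The function $\Phi(b)=\sum_i\gamma_i\log b_i$ is not even bounded below on $N=\{b\in\R^m_{>0}:\vol P(U,b)=1\}$. Replacing $b$ by $b-U^\intercal t$ translates $P(U,b)$ and keeps its volume. Letting $t$ tend to a relative interior point of a facet $F_i(b)$ gives $b_i\to0$ and $\Phi\to-\infty$.

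\begin{itemize}
\item The minimizer in your first option of Step~3 therefore does not exist. Defining $\gamma^0$ through a functional built from $\gamma^0$ is also circular. And local strict convexity near one critical point says nothing about the others; compare the three solutions in Example~\ref{exp:pentagon}.
\item The ``main obstacle'' you name is not merely hard but false, and $\sum(-1)^{\mathrm{index}}=\chi(N)=1$ does not hold. For $m=n+1$, $N$ consists of the translates of one simplex. Then $\Phi|_N$ is strictly concave with a single critical point, a maximum of index $n$, so the sum is $(-1)^n$.
\item In general the translation directions $U^\intercal t$ are tangent to $N$. At every solution they contribute the negative definite block $-\sum_i\gamma_ib_i^{-2}u_iu_i^\intercal$ to the Hessian, and the signed count on $N$ is $(-1)^n$, not $\chi(N)$.
\end{itemize}

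The missing idea is to eliminate translations before any coercivity or degree argument.

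\begin{itemize}
\item For fixed $b$, the map $t\mapsto\sum_i\gamma_i\log(b_i-\langle u_i,t\rangle)$ is strictly concave on $\inter P(U,b)$ and tends to $-\infty$ at its boundary. Moving the origin to its unique maximizer and rescaling to volume one lands in $M_\gamma=\{b:\sum_i\gamma_iu_i/b_i=0,\ v(b)=1\}$. This set is homeomorphic to an open convex set of dimension $m-n-1$.
\item On this slice the reduced functional is $C^1$, and its critical points are exactly $S(U,\gamma)$.
\item It is coercive because Zhu's compactness theorem gives $b_i\geq\delta>0$ on $M_\gamma$. This is the only place general position enters.
\item A degree count for gradients of coercive $C^1$ functions (Castro--Lazer) then gives $\sum\sgn\det\mathrm{Hess}=1$ over the critical points.
\item Parity follows once a Hessian computation shows that $\gamma\notin B(U)$ makes every critical point non-degenerate. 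You assert this but do not carry it out.
\end{itemize}

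Finally, Step~2 reaches a true conclusion for a wrong reason. In general position $C_{\cv}(U)=\R^m_{>0}$ by Zhu's existence theorem. This cone contains vectors violating subspace concentration (put the origin near a vertex of a simplex), so it is not cut out by strict subspace-concentration inequalities. Properness of $f_{\cv}^U$ comes instead from Zhu's uniform diameter bound, as in the proof of Theorem~\ref{thm:con}. Even so, properness alone yields only a mod-$2$ degree that still has to be evaluated, and that evaluation is exactly the missing part.
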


The proof combines the variational characterization of the solutions,
which goes back to \cite{boroczky2013logarithmic_JAMS,Zhu2014}, with a
degree argument; general position enters only through Zhu's compactness
theorem. Irreducibility alone does not suffice, as trapezoids show
(Remark~\ref{rem:parity-sharp}). As a consequence, for $U\in\U(2,m)$,
$m\geq4$, in general position, Theorem~\ref{thm:1.7} can be improved to
$|S(U,\gamma)|\geq3$ for some $\gamma$
(Remark~\ref{rem:parity-consequences}).

Finally, in Section~\ref{sec:derivative} we investigate the
differential structure of the cone-volume map. On each type-cone its
components are homogeneous polynomials, and in
Theorem~\ref{thm:dev} we derive an explicit formula for its
derivative. This is used to relate algebraic properties of the
cone-volume map to geometric properties of the prescribed outer unit
normals. For instance, Proposition~\ref{thm:gamma_linear_iff_zylinder}
characterizes when a cone-volume polynomial $f_i$ is linear in the
corresponding support number $b_i$, while
Proposition~\ref{ubd} gives a sufficient condition for $f_i$ to
contain the monomial $b_i^n$.

The paper is organized as follows.
Section~\ref{sec:preliminaries} collects the necessary preliminaries
on irreducible decompositions, type-cones and semialgebraic sets.
In Section~\ref{sec:planar} we study the planar case. We establish the
dimension bound for the fibers, develop a stability argument for
non-uniqueness under the insertion of additional outer unit normals,
and prove Theorem~\ref{thm:1.7} and Corollary~\ref{thm:1.8}.
Section~\ref{sec:irreducible} is devoted to irreducible sets of outer
unit normals in arbitrary dimension and contains the proof of Theorem~\ref{thm:generic-finiteness-intro} and of Theorem~\ref{thm:parity2}, which is a stronger version of Theorem~\ref{thm:parity}, together with further results on the
number of solutions. Section~\ref{sec:derivative} studies the
derivative of the cone-volume map and its relation to the geometry of
the prescribed outer unit normals.

\section{Preliminaries}
\label{sec:preliminaries}

This paper builds upon the work presented in \cite{baumbach2025polynomialinequalitiesconevolumespolytopes}. For the sake of completeness, we revisit its preliminaries and as a general reference for  basic notations regarding convex bodies or polytopes we refer to \cite[Ch.1, Ch.2]{Schneider2014}.
First we recall basic properties of the cone-volume set.

Recall that for $U\in\mathcal{U}(n,m)$ and  $b\in \R^m_{> 0}$, the cone-volume vector of $P=P(U,b)$ is
\begin{equation*}
	f_{\cv}(b)=\frac{1}{n}\Big(b_1\vol_{n-1}(F(u_1)),\dots, b_m\vol_{n-1}(F(u_m))\Big)\in \R^m_{\geq 0}. 
\end{equation*}
For $u\in\sph^{n-1}$ let 
\[
F_P(u)=P\cap \{ x\in\R^n: \langle u,x\rangle=h_P(u)\}.
\]
be the face of $P$ in direction $u$. As mentioned in the introduction, for a convex body $K$, the function  
\begin{equation*}
	h_K :\sph^{n-1}\to\R \text{ with }h_K(u)\coloneqq \max\{ \langle u,x\rangle : x \in K \}
\end{equation*} 
is called the support function of $K$. Apparently if $h_P(u_i)<b_i$, or in general, $\dim F_P(u_i)<n-1$ the ith entry of the cone-volume vector is 0.

The cone-volume cone $C_\cv(U)\coloneqq f_{\cv}(\mathbb R^m_{> 0})\cap \mathbb R_{>0}^m$ can be decomposed by considering the (in matroid theory so-called) \textit{separators} of $U$, denoted by $\mathcal F(U)$ hereafter:
\[
\mathcal F(U)\coloneqq \Bigl\{ S \subseteq U : 1 \leq  \mathrm{rk}(S) \leq n - 1, \  \R^n=\lin(S)\oplus\lin(U\setminus S) \Bigl\}.
\]
Here by $S \subseteq U$ we mean a subtuple of columns in $U$, and by $\mathrm{rk}(S)$ we denote the dimension of $\lin S$. As mentioned in the introduction  $U \in \mathcal{U}(n,m)$ is called irreducible if $\mathcal{F}(U) = \emptyset$, otherwise reducible.

Observe that if $S$ is a separator of $U$, so is $U\setminus S$. Looking at the separators of $S$ and $U\setminus S$ and so forth leads to a unique partition of $U$ into irreducible separators:
\[
U=S_1\cup\dots\cup S_k.
\]

We point out that such a decomposition has a background from matroid theory; see \cite[Theorem 6.81]{Aigner1997}. Using this, we deduce the following decomposition of the cone-volume set, where we use the normalized set 
\begin{equation*}
	C_{\cv}^1(U)\coloneqq \{\gamma\in C_{\cv}(U): \|\gamma\|_1=1\},
\end{equation*}
where $\|x\|_1=|x_1|+\cdots +|x_m|$ denotes the $1$-norm.

\begin{proposition}[{\cite[Proposition 2.7]{baumbach2025polynomialinequalitiesconevolumespolytopes}}]
	\label{PropositionSeparatorsPsccAndConeVolumesDimension}
	Let $U=S_1\cup\cdots\cup S_k$  
	be the (unique) decomposition of $U$ into irreducible separators. Then, up to a permutation of the coordinates,
	\[
	C_{\cv}^1(U)
	=
	\frac{\dim(S_1)}{n}C_{\cv}^1(S_1)
	\times\cdots\times
	\frac{\dim(S_k)}{n}C_{\cv}^1(S_k).
	\]
\end{proposition}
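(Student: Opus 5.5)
This statement is Proposition 2.7 of \cite{baumbach2025polynomialinequalitiesconevolumespolytopes}, but here is how I would prove it directly. The plan is to reduce to a single separator $S$ and its complement $T=U\setminus S$, and then induct along the irreducible decomposition. Set $n_1=\dim\lin(S)$ and $n_2=n-n_1$. I would first change coordinates by a linear map $A$ with $A(\lin S)=\R^{n_1}\times\{0\}$ and $A(\lin T)=\{0\}\times\R^{n_2}$. Normals transform as $u\mapsto A^{-\top}u$, renormalised with the matching rescaling of $b_i$. Cone volumes are all multiplied by $|\det A|^{-1}$, so the normalised set $C^1_{\cv}$ does not change. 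It therefore suffices to treat the orthogonal situation $\lin S\perp\lin T$.

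In the orthogonal setting, $P(U,b)=Q_1\times Q_2$, where $Q_1=P(S,b_S)\subset\lin S$ and $Q_2=P(T,b_T)\subset\lin T$. Each facet of $P$ with normal $u_i\in S$ has the form $F_i(Q_1)\times Q_2$. Its cone volume is therefore
\[
\frac{b_i}{n}\vol_{n_1-1}(F_i(Q_1))\vol_{n_2}(Q_2)=\frac{n_1}{n}\,\gamma_i(S,b_S)\,\vol(Q_2).
\]
Summing over $i\in S$ gives total $\frac{n_1}{n}\vol(Q_1)\vol(Q_2)=\frac{n_1}{n}\vol(P)$. Normalising, the $S$-block of $\gamma(U,b)/\|\gamma(U,b)\|_1$ equals $\frac{n_1}{n}\,\gamma(S,b_S)/\|\gamma(S,b_S)\|_1$, and the same holds for $T$. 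All entries are positive exactly when both blocks are positive. This gives the inclusion $\subseteq$.

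For the reverse inclusion, take $\eta_1\in C^1_{\cv}(S)$ and $\eta_2\in C^1_{\cv}(T)$, realised by $b_S$ and $b_T$. Choose $b=(b_S,b_T)$. By the computation above, the normalised cone-volume vector of $P(U,b)$ is exactly $(\frac{n_1}{n}\eta_1,\frac{n_2}{n}\eta_2)$. Because of normalisation, no scaling between the factors is needed. Induction on $k$ then gives the product formula. This uses that an irreducible decomposition of $U$ restricts to decompositions of $S$ and $T$, and that the dimensions multiply correctly: $\frac{n_1}{n}\cdot\frac{\dim S_j}{n_1}=\frac{\dim S_j}{n}$.

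I expect the main obstacle to be the non-orthogonal coordinate change. I need to check that the map $A$ preserves the structure of $\U$, meaning unit normals and positive hull $\R^n$, as well as the type of the polytope. I also need the cone volumes to scale uniformly. The uniform scaling holds because $A$ maps $P$ to $AP$ and maps each cone $\conv(F_i\cup\{0\})$ to its image, so every volume is multiplied by $|\det A|$. One more point is easy to miss: the sets $C^1_{\cv}(S_j)$ are defined with respect to $\lin S_j$. Since the normalisation removes the dependence on the choice of volume on $\lin S_j$, this causes no problem.
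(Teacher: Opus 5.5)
Your proof is correct and follows essentially the route the paper relies on: the statement itself is only cited, but the same reduction is carried out in the proof of Proposition~\ref{thm:finiteness-dimension}, namely linear invariance to make the spans $\lin S_j$ mutually orthogonal, then $P(U,b)=P(S_1,b_{S_1})\oplus\cdots\oplus P(S_k,b_{S_k})$, and then $\gamma_{S_j}=\frac{n_j}{n}\gamma(S_j,b_{S_j})/\vol_{n_j}(P(S_j,b_{S_j}))$. One slip in your coordinate change needs repair: if $A$ acts on points, so that normals move by $A^{-\top}$, you must require $A^{-\top}(\lin S)=\R^{n_1}\times\{0\}$ and $A^{-\top}(\lin T)=\{0\}\times\R^{n_2}$ rather than the same for $A$ (equivalently, apply your $A$ to the normals and $A^{-\top}$ to the polytope, which your factor $|\det A|^{-1}$ already implicitly assumes), and since all cone volumes still scale by one common factor, nothing else changes.
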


We call $U\in \mathcal{U}(n,m)$ in \emph{general position} if each subtuple $S\subset U$ of cardinality $n$ is linearly independent. 
  
Two polytopes $P$ and $Q$ are called
\emph{strongly isomorphic} if for every $u\in\sph^{n-1}$
\[
\dim F_P(u)=\dim F_Q(u).
\] 

For fixed $U$, the combinatorial structure of $P(U,b)$ is constant
on suitable polyhedral cones in the space of support vectors. 
\begin{lemma}[{\cite[Lemma 3.1]{baumbach2025polynomialinequalitiesconevolumespolytopes}, \cite[Section 5.1]{Schneider2014}}]
	\label{res:type_cones_and_volume_polynomials}
	Let $U\in\U(n,m)$. Then $\R^m_{\geq0}$ admits a finite polyhedral
	subdivision, denoted by $\mathcal A(U)$, with the following
	properties. For every $A\in\mathcal A(U)$:
	\begin{enumerate}
		\item for $b,\bar b\in\inter A$, the polytopes
		$P(U,b)$ and $P(U,\bar b)$ are strongly isomorphic;
		
		\item there exist polynomials
		\[
		v^A,f_1^A,\ldots,f_m^A\in\R[b_1,\ldots,b_m]
		\]
		of degree at most $n$ such that, for $b\in\inter A$,
		\[
		v^A(b)=\vol(P(U,b))
		\]
		and
		\[
		f_i^A(b)=V_{P(U,b)}(\{u_i\}),
		\qquad i=1,\ldots,m.
		\]
	\end{enumerate}
\end{lemma}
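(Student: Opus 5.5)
The plan is to follow the standard construction of the type-cone subdivision from Schneider's treatment of strong isomorphism classes \cite[Section 5.1]{Schneider2014}, adapted to the support vectors $b\in\R^m_{\geq0}$.

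\textbf{Step 1: define the cones.} For each $b$ we record the combinatorial type of $P(U,b)$. This is the collection of subsets $I\subseteq\{1,\ldots,m\}$ for which
\[
P(U,b)\cap\{x:\langle u_i,x\rangle=b_i,\ i\in I\}
\]
is a nonempty face, together with its dimension. Equivalently, for each vertex-candidate basis $J$, $|J|=n$, $U_J$ invertible, we let $x_J(b)=U_J^{-\intercal}b_J$, which is linear in $b$. Feasibility of $x_J(b)$ means $\langle u_k,x_J(b)\rangle\leq b_k$ for all $k$, and each such condition is a linear inequality in $b$. The sign vectors
\[
\sgn\bigl(\langle u_k,x_J(b)\rangle-b_k\bigr)_{J,k}
\]
cut $\R^m_{\geq0}$ into relatively open polyhedral cones, namely the cells of a central hyperplane arrangement intersected with the orthant. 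Their closures form a finite polyhedral subdivision $\mathcal A(U)$; take the full-dimensional cells as the $A$.

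\textbf{Step 2: strong isomorphism.} On $\inter A$ all sign vectors are constant and nonzero. So the set of vertices, each described by the set of its tight constraints, is constant. The face lattice, i.e. which subsets of constraints are simultaneously tight on a face, is then determined, and so are $\dim F_P(u)$ for all $u$. This gives (1).

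\textbf{Step 3: polynomiality.} We argue by induction on $n$. For $n=1$ the polytope is a segment $[-b_j/|u_j|,\,b_i/u_i]$ for fixed indices on $\inter A$. Hence its volume is linear, and facet volumes are constants, so that $f_i^A$ is linear.

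For the induction step, use $\vol(P)=\frac1n\sum_i b_i\vol_{n-1}(F_i(b))$. On $\inter A$ each facet $F_i(b)$ is an $(n-1)$-polytope in the hyperplane $u_i^\perp+b_iu_i$ with fixed normals, namely the projections of the $u_j$ adjacent to $u_i$. Its support numbers are linear functions of $b$:
\[
\frac{b_j-\langle u_i,u_j\rangle b_i}{\sqrt{1-\langle u_i,u_j\rangle^2}}.
\]
The combinatorial type of the facet is constant, so by induction $\vol_{n-1}(F_i(b))$ is a polynomial of degree at most $n-1$ in $b$. Then $f_i^A=\frac{b_i}{n}\vol_{n-1}(F_i)$ and $v^A=\sum_i f_i^A$ are polynomials of degree at most $n$.

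Two caveats need care. First, $0$ may not lie in the interior of the facet's relative frame. This is harmless, since the support-number formula does not require it; volume is polynomial for fixed type regardless. Second, if $F_i$ is lower-dimensional on $\inter A$, we set $f_i^A=0$. One could alternatively use a triangulation into simplices with vertices $x_J(b)$ linear in $b$, whose volumes are determinants and hence polynomials of degree $n$.

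\textbf{Main obstacle.} The main obstacle is making the induction precise for degenerate (non-simple) types. There several constraints may be tight at a vertex, and the facet's combinatorics is still constant on $\inter A$ but the facet normals are not in general position. I would circumvent this with the triangulation approach. Fix on $\inter A$ a pulling triangulation determined purely combinatorially by the constant face lattice. Each simplex then has vertices $x_J(b)$ that are linear in $b$, so facet volumes are sums of $(n-1)$-dimensional determinants of degree $n-1$. This gives the polynomial claims directly.
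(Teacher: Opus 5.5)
Your proposal is correct. Steps 1--3 are essentially the standard argument behind the cited lemma:
\begin{itemize}
\item your walls are the hyperplanes expressing concurrency of $n+1$ supporting hyperplanes, which is the description the paper itself uses when counting type-cones in the proof of Theorem~\ref{thm:generic-finiteness-intro};
\item Step 3 is Schneider's induction via $\vol(P)=\frac1n\sum_i b_i\vol_{n-1}(F_i(b))$, using the linear facet support numbers $h_{i,j}$ of \eqref{eq:nr2} and \eqref{eq:facetineq}.
\end{itemize}
Your cells may be finer than the strong-isomorphism classes. That is harmless for the statement as given.

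The ``main obstacle'' you describe does not actually arise, for two reasons.
\begin{itemize}
\item On $\inter A$ every form $\langle u_k,x_J(b)\rangle-b_k$ with $k\notin J$ is nonzero. It is not identically zero, since its $b_k$-coefficient is $-1$. So no vertex of $P(U,b)$ has more than $n$ tight constraints, every polytope over $\inter A$ is simple, and so are its facets.
\item More to the point, the induction never needs general position of the facet normals $\nu_{i,j}$. It only needs the facet support vectors $(h_{i,j}(b))_{j\in J^A(i)}$, $b\in\inter A$, to stay in one cell of the $(n-1)$-dimensional problem. This holds because every facet-level wall pulls back to a wall of your arrangement. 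The point $b_iu_i+y$ cut out by $\langle\nu_{i,j},y\rangle=h_{i,j}$, $j\in J'$, equals $x_{\{i\}\cup J'}(b)$, and
$\sin\theta_{i,k}\bigl(\langle\nu_{i,k},y\rangle-h_{i,k}\bigr)=\langle u_k,x_{\{i\}\cup J'}(b)\rangle-b_k$.
Add convexity of $\inter A$, and translation invariance to handle non-positive $h_{i,j}$ as you indicate, and the induction closes.
\end{itemize}
It is cleanest to state the induction hypothesis as Schneider does: for strong-isomorphism classes of support vectors in all of $\R^{m'}$, rather than for cells of the positive orthant.

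Your triangulation alternative is also valid. You should note that the determinants have constant sign on the connected set $\inter A$, so their absolute values are polynomials there. It gives a direct, non-inductive proof that works uniformly for all combinatorial types. The inductive route, by contrast, is what produces the facet description \eqref{eq:facetineq}, the identity $\partial v/\partial b_i=\vol_{n-1}(F_i)$ in \eqref{eq:parity-dv}, and the derivative formula of Theorem~\ref{thm:dev}, all of which the paper relies on later.
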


The elements of $\mathcal A(U)$ are called \emph{type-cones}. A
type-cone $A\in\mathcal A(U)$ is called \emph{proper} if for $b\in\inter A$ every $u_i$
determines a facet of $P(U,b)$, i.e., $\dim F_i(b)=n-1$, $i=1,\ldots,m$.

We call $v^A$ the \emph{volume polynomial} and
$f_1^A,\ldots,f_m^A$ the \emph{cone-volume polynomials} on $A$.
Writing
\[
f_{\cv}^A(b)=(f_1^A(b),\ldots,f_m^A(b)),
\] 
we denote by 
\[
{\det}^A(b)\coloneqq \det Df_{\cv}^A(b)
\]
the Jacobian determinant of the cone-volume map on $A$. Whenever there is no
ambiguity in the choice of the type-cone, we omit the superscript
$A$. Since the cone-volume map is piecewise polynomial with respect
to the finite subdivision $\mathcal A(U)$, every fiber
$S(U,\gamma)$ is semialgebraic.

For later use, we also record the adjacency structure of the facets, again following \cite[Section 5.1]{Schneider2014}. 
For $A\in\mathcal A(U)$, let
\[
J^A
\coloneqq 
\left\{
(i,j)\in\{1,\ldots,m\}^2:
\dim F_i(b)=\dim F_j(b)=n-1,\;
\dim(F_i(b)\cap F_j(b))=n-2
\right\},
\]
where $b\in\inter A$. This definition is independent of the choice
of $b\in\inter A$. We say that $u_j$ is \emph{adjacent} to $u_i$ on
$A$ if $(i,j)\in J^A$, and write
\begin{equation}
	\label{eq:nr1}
J^A(i)\coloneqq \{j:(i,j)\in J^A\}.
\end{equation} 

Let $\theta_{ij}$ denote the angle between $u_i$ and $u_j$. Whenever
$(i,j)\in J^A$, set
\begin{equation}
	\label{eq:nr2}
\nu_{i,j}
\coloneqq \frac{1}{\sin(\theta_{i,j})}\left(
 u_j-\cos(\theta_{i,j})u_i\right) \text{ and } 
h_{i,j}
\coloneqq 
\csc(\theta_{i,j}) b_j-\cot(\theta_{i,j})b_i.
\end{equation}
Denoting by $u_i^\perp$ the orthogonal complement of $\lin(u_i)$ the vector  $\nu_{i,j}\in u_i^\perp$ 
 is the outer unit normal of the facet $F_i(b)$ at
the $(n-2)$-face $F_i(b)\cap F_j(b)$, and
\begin{equation}
\label{eq:facetineq}
F_i(b)
=
b_i u_i+
\left\{
x\in u_i^\perp:
\langle x,\nu_{i,j}\rangle\leq h_{i,j}
\text{ for }j\in J^A(i)
\right\}.
\end{equation}

Finally, we recall some terminology from semialgebraic geometry; see
\cite[Section 2.8]{bochnak2013real}. A set $X\subset\R^m$ is called
\emph{semialgebraic} if it is a finite Boolean combination of sets
defined by polynomial equalities and inequalities.

The \emph{semialgebraic dimension} of a nonempty semialgebraic set
$X\subset\R^m$, denoted by $\dim^*X$, is the largest integer $d$
for which some coordinate projection of $X$ onto $\R^d$ has nonempty
interior; see \cite[Section~2.8]{bochnak2013real} and
\cite[Fact 3.3]{EleftheriouSavatovsky2020}.  In particular,
\begin{equation} 
\dim^*X=0
 \text{ if and only if } X\text{ is finite}.
 \label{eq:sadim}
\end{equation} 
A map $f:X\to Y$ between semialgebraic sets is called 
\emph{semialgebraic} if its graph
\[
\{(x,f(x)):x\in X\}
\]
is a semialgebraic subset of the corresponding Euclidean space.
 For polynomials $p_1,\ldots,p_k$, we denote their common real and
complex zero sets by
\[
\mathcal V(p_1,\ldots,p_k)
\quad\text{and}\quad
\mathcal V_{\mathbb C}(p_1,\ldots,p_k),
\]
respectively.

\section{Planar case}\label{sec:planar}
In this section, we consider "planar" sets  $U=(u_1,\dots,u_m)\in\mathcal{U}(2,m)$, and assume that $u_1,\dots,u_m$ are ordered counterclockwise. 
Since we are in the plane, there is only one proper type-cone $ A^*\in\mathcal A(U)$.

For $b\in\operatorname{int}A^*$ and $1\leq i\leq m$, the length of the edge
$F_i(b)$ is given by
\begin{align}
	\vol_1(F_i(b))
	&=h_{i,i-1}+h_{i,i+1} \notag\\
	&=-(\cot\theta_{i,i-1}+\cot\theta_{i,i+1})b_i
	+\csc\theta_{i,i+1}b_{i+1}
	+\csc\theta_{i,i-1}b_{i-1} \notag\\
	&=-A_i(U)b_i+B_i(U)b_{i+1}+C_i(U)b_{i-1},
	\label{eq:planar-edge-length}
\end{align}
where the indices are understood modulo $m$ and
\[
A_i(U)\coloneqq \cot\theta_{i,i-1}+\cot\theta_{i,i+1},\qquad
B_i(U)\coloneqq \csc\theta_{i,i+1},\qquad
C_i(U)\coloneqq \csc\theta_{i,i-1}.
\]
Hence
\begin{equation}
	f_i(b)=f_i^{A^*}(b)
	=\frac12\left(
	-A_i(U)b_i^2+B_i(U)b_i b_{i+1}+C_i(U)b_{i-1}b_i
	\right).
	\label{eq:planarconevolumes}
\end{equation}
When there is no ambiguity, we simply write $A_i,B_i,C_i$.
Since $0<\theta_{i,i+1}<\pi$, we have $B_i>0$. Thus the equation
$f_i(b)=\gamma_i$ can, whenever $b_i\neq0$, be solved uniquely for
$b_{i+1}$:
\begin{equation}
	b_{i+1}
	=\frac{A_i b_i-C_i b_{i-1}}{B_i}
	+\frac{2\gamma_i}{B_i b_i}.
	\label{eq:planar-recursion}
\end{equation}

\subsection{Dimension of the solution set}

The elementary recursion \eqref{eq:planar-recursion} will be used repeatedly
below. In particular, the equations
\[
f_i(b)=\gamma_i,\qquad i=1,\ldots,m-2,
\]
successively determine $b_2,\ldots,b_{m-1}$ from $b_m$ and $b_1$,
provided $b_1,\ldots,b_{m-2}$ are nonzero. This immediately yields the
following observation.

\begin{lemma}
	\label{lem:two-support-numbers}
	Let $U\in\U(2,m)$ and $\gamma\in C_{\cv}(U)$. Then the coordinate projection
	\[
	\pi_2:S(U,\gamma)\longrightarrow\R^2,\qquad
	b\longmapsto(b_m,b_1),
	\]
	is injective.
\end{lemma}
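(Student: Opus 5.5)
The plan is to show that any $b\in S(U,\gamma)$ is uniquely determined by the pair $(b_m,b_1)$, by running the recursion \eqref{eq:planar-recursion} forward from these two values. So let $b,\bar b\in S(U,\gamma)$ with $b_m=\bar b_m$ and $b_1=\bar b_1$. I will show $b_i=\bar b_i$ for all $i$ by induction on $i$.

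The first and only non-trivial step is to justify that the planar formula \eqref{eq:planarconevolumes} actually applies to $b$ and $\bar b$. The formula was derived for $b\in\inter A^*$, while a priori a point of the fiber might lie elsewhere in $\R^m_{>0}$. I plan to argue as follows.
\begin{enumerate}
\item Since $\gamma\in C_{\cv}(U)\subset\R^m_{>0}$, every entry satisfies $\gamma_i=\tfrac12 b_i\vol_1(F_i(b))>0$.
\item Hence every $F_i(b)$ is an edge of positive length, and $P(U,b)$ is an $m$-gon whose edges have outer normals $u_1,\dots,u_m$ in counterclockwise order.
\item Consecutive edges $F_i(b)$ and $F_{i+1}(b)$ then meet at a vertex. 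So $h_{i,i\pm1}$ are the correct facet offsets, and $\vol_1(F_i(b))=h_{i,i-1}+h_{i,i+1}$ holds exactly as in \eqref{eq:planar-edge-length}.
\end{enumerate}
Equivalently, $b$ lies in the proper type-cone $A^*$. Actually only the identity \eqref{eq:planarconevolumes} is needed, and it holds whenever all edges have positive length, including on the boundary of the cone by continuity. The same holds for $\bar b$, so $f_i(b)=f_i(\bar b)=\gamma_i$ for all $i$, with $f_i$ the quadratic polynomials of \eqref{eq:planarconevolumes}.

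Next comes the induction itself. Assume $b_{i-1}=\bar b_{i-1}$ and $b_i=\bar b_i$ for some $1\le i\le m-2$, with indices modulo $m$; the base case $i=1$ uses $b_0=b_m$. Since $b_i>0$ and $B_i>0$, the equation $f_i(b)=\gamma_i$ can be solved for $b_{i+1}$, giving \eqref{eq:planar-recursion}:
\[
b_{i+1}=\frac{A_i b_i-C_i b_{i-1}}{B_i}+\frac{2\gamma_i}{B_i b_i}.
\]
The right-hand side depends only on $b_{i-1}$, $b_i$, $\gamma_i$ and $U$, so the same expression computes $\bar b_{i+1}$, and therefore $b_{i+1}=\bar b_{i+1}$. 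This determines $b_2,\dots,b_{m-1}$ successively, so $b=\bar b$ and $\pi_2$ is injective.

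I expect the main obstacle to be the verification in the second paragraph that fiber points over $\gamma\in\R^m_{>0}$ have all edges present. Without it, $f_i$ need not be given by \eqref{eq:planarconevolumes}. The nonvanishing of $b_i$ is automatic from $b\in\R^m_{>0}$.
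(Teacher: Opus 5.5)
Your proof is correct and follows the paper's argument. Both run the recursion \eqref{eq:planar-recursion} for $i=1,\ldots,m-2$ to recover $b_2,\ldots,b_{m-1}$ from $(b_m,b_1)$. Your one addition is the explicit check that $\gamma\in\R^m_{>0}$ forces every $u_i$ to span an edge, so that fiber points lie in $\inter A^*$ and \eqref{eq:planarconevolumes} applies. The paper uses this tacitly here and only states it later, in the proof of Theorem~\ref{thm:inserting-normal}.
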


\begin{proof}
	Let $b,b'\in S(U,\gamma)$ satisfy
	\[
	(b_m,b_1)=(b'_m,b'_1).
	\]
	Since $b,b'\in\R^m_{>0}$, the recursion \eqref{eq:planar-recursion} applies.
	Starting with $i=1$, it determines $b_2$ uniquely from $b_m$ and $b_1$.
	Proceeding successively with $i=2,\ldots,m-2$ determines
	$b_3,\ldots,b_{m-1}$. Hence $b=b'$.
\end{proof}

Since $\pi_2$ is a semialgebraic injective map,
Lemma~\ref{lem:two-support-numbers} yields
\[
\dim^*S(U,\gamma)\leq2;
\]
see, e.g., \cite[Section~2.8]{bochnak2013real}. In Theorem  \ref{thm:planar_dim_leq_1} we will improve on this bound to conjectured one of $\dim^* S(U,\gamma)\leq 1$. First, however, we point out that the projection onto one coordinate   
\begin{equation*}
	\pi_1 : S(U,\gamma) \rightarrow \R, \quad b \mapsto b_1
\end{equation*}
is not injective, in general. 
\begin{example}\label{exp:pentagon}
	Let \[U=\begin{pmatrix}
			1& \cos\frac{2\pi}{5}&\cos\frac{4\pi}{5}&\cos\frac{6\pi}{5}&\cos\frac{8\pi}{5}\\
			0&\sin\frac{2\pi}{5}&\sin\frac{4\pi}{5}&\sin\frac{6\pi}{5}&\sin\frac{8\pi}{5}
		\end{pmatrix}\] be the outer unit normals of a regular pentagon (arranged counterclockwise), and let $\gamma=(4,4,1,1,4)$. The given cone-volume measure is reflectional symmetric with respect to the line $\mathrm{span}\{u_1\}$; however, the corresponding Minkowski problem has three solutions, one of which is reflectional symmetric, and the other two are mirror images of each other. As a consequence, there are two solutions to this problem with the same cone-volume measure and the same $b_1$.
	
	\begin{figure}[h!]
			\begin{center}
					\begin{tikzpicture}[scale=0.4]
							
							\definecolor{mypurple}{RGB}{150,100,210}
							
							\begin{scope}
									\fill (0,0) circle (2pt);
									
									\filldraw[mypurple!40, draw=black, line width=0.8pt]
									(1.162148599, -2.432962146) --
									(4.008549318, -1.508110942) --
									(4.008549318, 0.4876224531) --
									(-0.4605273728, 1.939712782) --
									(-1.237655842, 0.8700867407) -- cycle;
									
									\foreach \x/\y in {
											1.162148599/-2.432962146,
											4.008549318/-1.508110942,
											4.008549318/0.4876224531,
											-0.4605273728/1.939712782,
											-1.237655842/0.8700867407}{
											\draw[black, thin] (0,0) -- (\x,\y);
										}
									
									\draw[black, thick, dashed] (4.008, -3) -- (4.008, 2.5) node[black,above right] {$x \approx 4$};
								\end{scope}
							\begin{scope}[xshift=9.5cm] 
									\fill (0,0) circle (2pt);
									
									\filldraw[mypurple!40, draw=black, line width=0.8pt]
									(-1.237655842, -0.8700867407) --
									(-0.4605273728, -1.939712782) --
									(4.008549318, -0.4876224531) --
									(4.008549318, 1.508110942) --
									(1.162148599, 2.432962146) -- cycle;
									
									\foreach \x/\y in {
											-1.237655842/-0.8700867407,
											-0.4605273728/-1.939712782,
											4.008549318/-0.4876224531,
											4.008549318/1.508110942,
											1.162148599/2.432962146}{
											\draw[black, thin] (0,0) -- (\x,\y);
										}
									
									\draw[black, thick, dashed] (4.008, -2) -- (4.008, 2.5) node[black,above right] {$x \approx 4$};
								\end{scope}
						\end{tikzpicture}
					\caption{The non-reflectional-symmetric solutions to a reflectional symmetric cone-volume data}
				\end{center}
			
		\end{figure}
		The non-symmetric solutions to this problem, symbolically solved by OSCAR computer algebra system \cite{OSCAR-book,OSCAR}, are approximately $b=(4.01, 1.70, 1.51, 0.49, 2.67)$, and \newline $\overline{b}=(4.01, 2.67, 0.49, 1.51, 1.70)$. The entries are all algebraic numbers, and we list the minimal polynomial as following. 
	
	The minimal polynomial for $b_1$
	\begin{align*}
			&38416x^{32} - 16356200x^{28} + 1997820565x^{24} - 91998304800x^{20} + 1378110060800x^{16} \\
			- &1354809344000x^{12}
			- 16242442240000x^{8} - 821412495360000x^{4} + 687194767360000,
		\end{align*}
	the minimal polynomial for $b_2$ and $b_5$ is
	\begin{align*}
			&92236816x^{64} - 32002880980x^{60} + 422589265525x^{56} + 298932852687000x^{52} - 24329458160813650x^{48}\\
			+ &335963006048807500x^{44} + 72363603702201170125x^{40} - 6519700080313668200000x^{36} \\
			+ &297642512506948958720000x^{32}
			- 8678192734746686259200000x^{28} \\
			+ &166491642606924909772800000x^{24} - 2016627066901536702464000000x^{20}\\ 
			+ &14252971162702218002432000000x^{16} - 54240518635056864952320000000x^{12}\\
			+ &102379879969050748518400000000x^{8} - 90158461660255433523200000000x^{4}\\
			+ &29514790517935282585600000000,
		\end{align*}
	and the minimal polynomial for $b_3$ and $b_4$ is
	\begin{align*}
			&1475789056x^{64} - 401587802560x^{60} + 77767639351360x^{56} - 5844052978537900x^{52} + 166338239243277425x^{48}
			\\- &1501192555389456750x^{44} + 5392147886363919125x^{40} - 7072849133597453750x^{36}\\
			+ &1091546407030346250x^{32} + 117339582530156250x^{28} - 7969189581121875x^{24}\\
			- &603808300406250x^{20} + 21134757906250x^{16} + 630679218750x^{12}\\
			- &2139296875x^{8} - 77343750x^{4} + 390625.
		\end{align*}
\end{example}

In the planar case it is still possible to deduce that the dimension of $S(U,\gamma)$ 
is at most $1$, by considering the complex algebraic set $\mathcal{V}_{\mathbb{C}}(f_1 - \gamma_1,\dots,f_{m-1}(b) - \gamma_{m-1})$. Notice that we only consider the first $m-1$ polynomials for the dimension and not all $m$ equations. 
Additionally, we have no irreducibility assumption in the next statements. 

\begin{theorem}
	\label{thm:planar_dim_leq_1}
	Let $U\in\U(2,m)$ and $\gamma\in C_{\cv}(U)$. Then
	\[
	\dim^*S(U,\gamma)\leq1,
	\]
	and this bound is sharp.
\end{theorem}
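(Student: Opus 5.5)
The plan is to work on the unique proper type-cone $A^*$ and use the explicit quadratic cone-volume polynomials \eqref{eq:planarconevolumes}. A preliminary step handles solutions lying off $\inter A^*$. Since $\gamma\in C_{\cv}(U)\subset\R^m_{>0}$, every $u_i$ must determine a genuine edge, so $\dim F_i(b)=1$ for all $i$. Hence any $b\in S(U,\gamma)$ lies in the closure of $A^*$ where all edge lengths are positive, and there $f_{\cv}(b)=f_{\cv}^{A^*}(b)$. Thus
\[
S(U,\gamma)\subset X\coloneqq\mathcal V(f_1-\gamma_1,\dots,f_{m-1}-\gamma_{m-1})\cap\R^m_{>0},
\]
and $\dim^*S(U,\gamma)\le\dim^* X$. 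It therefore suffices to show that the real variety cut out by the first $m-1$ equations has dimension at most one on the positive orthant. For this I will bound the complex dimension, using $\dim^*(\mathcal V\cap\R^m)\le\dim_{\mathbb C}\mathcal V_{\mathbb C}$.

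The key step is to show that $\mathcal V_{\mathbb C}(f_1-\gamma_1,\dots,f_{m-1}-\gamma_{m-1})\cap\{b_1\cdots b_{m-1}\neq0\}$ has dimension at most one. The recursion \eqref{eq:planar-recursion} works over $\mathbb C$ whenever $b_i\neq0$. The equations $f_1,\dots,f_{m-2}$ express $b_3,\dots,b_{m-1}$ as rational functions of $(b_m,b_1,b_2)$ with the recursion run from index $1$, or more conveniently as rational functions $R_3,\dots,R_m$ of $(b_1,b_2)$ alone. Concretely, I rewrite the equation for $i=1$ as $b_m=(-A_1b_1^2+B_1b_1b_2-2\gamma_1)/(C_1b_1)$, which uses $C_1>0$. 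Then $f_2,\dots,f_{m-2}$ determine $b_3,\dots,b_{m-1}$ successively.

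After this parametrization the open set is the graph of a rational map over $(\mathbb C^*)^2\ni(b_1,b_2)$, cut by one remaining equation $g(b_1,b_2)\coloneqq f_{m-1}(b_1,\dots,b_m)-\gamma_{m-1}=0$. The zero set of a single rational function on a $2$-dimensional domain has dimension at most one unless $g$ vanishes identically. So the main obstacle is to prove that $g\not\equiv0$. My first attempt is a degree/asymptotic argument. Fix $b_1$ and let $b_2=t\to\infty$. Inductively, $b_{k}$ behaves like $c_k t^{\,d_k}/(\text{lower})$ with leading coefficient built from the positive constants $B_i$, because the dominant term of \eqref{eq:planar-recursion} is $A_ib_i/B_i$ or $C_ib_{i-1}$ with nonvanishing combinations. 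I would then show that $f_{m-1}$ grows unboundedly in $t$ along a suitable curve, for instance with $b_1$ fixed and all later $b_i$ polynomial in $t$ of degree one.

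If that computation gets messy, I will instead pick the easier route. Exploit the homogeneity: the $\gamma_i$ enter only as lower-order terms, so the top homogeneous part of $g$ is $f_{m-1}$ evaluated at the $\gamma=0$ solution of the recursion. This is a linear recursion, namely the edge-length equations $\vol_1(F_i)=0$, and it forces $b$ to be the support vector of a point. On that two-dimensional family $b=U^\top x$, the value $f_{m-1}$ is identically $0$ while the constant term is $-\gamma_{m-1}\neq0$. I then need a finer argument to exclude cancellation, for example evaluating at a specific point using an actual polygon with a perturbed $\gamma$. Points with some $b_i=0$ are irrelevant since $X\subset\R^m_{>0}$.

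For sharpness, I take a reducible $U$, e.g.\ $U=(e_1,e_2,-e_1,-e_2)$. Here the rectangles $[-s a,s a']\times[-s^{-1}c,s^{-1}c']$ give a one-parameter family with the same cone-volume vector, as noted in the introduction, so $\dim^*S(U,\gamma)=1$.
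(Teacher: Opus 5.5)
Your reduction matches the paper's. On the proper type-cone, the first $m-2$ equations cut out an irreducible complex surface $X_\gamma$ parametrized by two coordinates: the paper uses $(b_m,b_1)$, you use $(b_1,b_2)$. The last equation $g=f_{m-1}-\gamma_{m-1}$ lowers the dimension to at most one provided $g\not\equiv 0$ on $X_\gamma$, and real semialgebraic dimension is bounded by complex dimension. Your sharpness example is also the paper's. Two small slips: the formula should read $b_m=(2\gamma_1+A_1b_1^2-B_1b_1b_2)/(C_1b_1)$, and the parameter domain is not $(\mathbb C^*)^2$. It is the Zariski-open set where $b_1,\dots,b_{m-2}$ stay nonzero along the recursion, which is nonempty because it contains an actual solution. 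The genuine gap is that $g\not\equiv0$ is never established, and that is the entire content of the upper bound.

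\emph{Route (a).} It fails on the curve you propose. For $U=(e_1,e_2,-e_1,-e_2)$ your parametrization gives $b_4=2\gamma_1/b_1-b_2$ and $b_3=2\gamma_2/b_2-b_1$. Hence $g=\frac{2\gamma_1\gamma_2}{b_1b_2}-\gamma_1-\gamma_3$, which stays bounded as $b_2\to\infty$ with $b_1$ fixed. More generally, suppose no $u_i$ with $2\le i\le m-2$ equals $-u_1$. Then along that curve $b=U^\intercal(tx+y)+\frac{2\gamma_1}{C_1b_1}e_m+O(1/t)$ for fixed $x,y\in\R^2$ with $\langle u_1,x\rangle=0$ and $\langle u_2,x\rangle=1$. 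Consequently $f_{m-1}=t\,\langle u_{m-1},x\rangle\frac{B_{m-1}\gamma_1}{C_1b_1}+O(1)$. The leading coefficient vanishes when $u_{m-1}=-u_1$, and the expansion itself changes when $-u_1=u_i$ for an intermediate $i$. For origin-symmetric $U$, no cyclic relabelling avoids both cases.

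\emph{Route (b).} By itself it gives nothing, since the top-degree part vanishes identically, as you note. Your fallback of evaluating at an actual polygon with a perturbed $\gamma$ is precisely the missing statement. You need some $b'$ with $f_i(b')=\gamma_i$ for $i\le m-2$ but $f_{m-1}(b')\neq\gamma_{m-1}$, and you give no way to produce it.

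The paper supplies this in Lemma~\ref{lem:cone_volume_vectors_with_different_last_entries}. For irreducible $U$ it sets $\gamma'_{m-1}=q-\nu_{m-1}$ and $\gamma'_m=q-\nu_m$ with $q$ large. Here $\nu_{m-1},\nu_m$ are the masses of $\gamma_1,\dots,\gamma_{m-2}$ on $\lin\{u_{m-1}\}$ and $\lin\{u_m\}$. It then checks the strict planar subspace concentration condition and obtains $b'\in S(U,\gamma')$ from the Chen--Li--Zhu sufficiency theorem. For the parallelogram it uses $\gamma'=(\gamma_1,\gamma_2,\gamma_3+1,\gamma_4+1)$. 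Such a $b'$ lies in $X_\gamma$ but not in $\mathcal V_{\mathbb C}(f_1-\gamma_1,\dots,f_{m-1}-\gamma_{m-1})$.

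Without this ingredient, or a completed asymptotic analysis that also handles antipodal pairs, your argument yields only $\dim^*S(U,\gamma)\le 2$.
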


For the proof we first study the complex zero set determined by the first
$m-2$ cone-volume equations.

\begin{lemma}
	\label{lem:Variety_of_cone_volumes}
	Let $U\in\U(2,m)$, $\gamma\in C_{\cv}(U)$, and let
	$f_i$, $1\leq i\leq m$, be as in \eqref{eq:planarconevolumes}. Then
	\[
	X_\gamma
	\coloneqq \mathcal V_{\mathbb C}
	(f_1-\gamma_1,\ldots,f_{m-2}-\gamma_{m-2})
	\subset\mathbb C^m
	\]
	is a smooth irreducible algebraic set of complex dimension $2$.
\end{lemma}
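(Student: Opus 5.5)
The plan is to show that $X_\gamma$ is the graph of a regular map defined on a nonempty Zariski-open subset of $\mathbb C^2$. All three properties then follow at once. The key observation is that $\gamma\in C_{\cv}(U)\subset\R^m_{>0}$, so $\gamma_i\neq0$ for every $i$. Writing
\[
f_i(b)-\gamma_i=\tfrac12 b_i\bigl(-A_ib_i+B_ib_{i+1}+C_ib_{i-1}\bigr)-\gamma_i ,
\]
every point of $X_\gamma$ satisfies $b_i\neq0$ for $1\le i\le m-2$. This holds over $\mathbb C$ as well, since otherwise the equation would read $-\gamma_i=0$. Because $B_i\neq0$, the recursion \eqref{eq:planar-recursion} is valid on all of $X_\gamma$, exactly as in the proof of Lemma~\ref{lem:two-support-numbers}. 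Starting from $(b_m,b_1)$, it determines $b_2,\ldots,b_{m-1}$ successively.

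\emph{Step 1 (the parameter domain).} By induction on $k$, I would define rational functions $\beta_k(b_m,b_1)$ for $k=2,\ldots,m-1$ using \eqref{eq:planar-recursion}: $\beta_{k+1}$ is obtained from $\beta_{k-1},\beta_k$ and involves division only by $\beta_k$. Let $W\subset\mathbb C^2$ be the set of $(b_m,b_1)$ with $b_1\neq0$ and $\beta_2,\ldots,\beta_{m-2}$ defined and nonzero. Each $\beta_k$ is a quotient of polynomials whose denominator is, inductively, a product of numerators already required to be nonzero. Hence $W$ is the complement in $\mathbb C^2$ of finitely many algebraic curves, i.e.\ a Zariski-open set. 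It is nonempty because $S(U,\gamma)\neq\emptyset$ and $\pi_2(S(U,\gamma))\subset W$.

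\emph{Step 2 (isomorphism).} The map $\Phi\colon W\to\mathbb C^m$, $(b_m,b_1)\mapsto(b_1,\beta_2,\ldots,\beta_{m-1},b_m)$, is regular on $W$. By construction its image lies in $X_\gamma$, since each equation $f_i=\gamma_i$, $i\le m-2$, is just the defining relation of $\beta_{i+1}$. Conversely, the first paragraph shows that every point of $X_\gamma$ has $(b_m,b_1)\in W$ and equals $\Phi(b_m,b_1)$. Hence the coordinate projection $\pi_2|_{X_\gamma}$ and $\Phi$ are mutually inverse regular maps, and $X_\gamma\cong W$ as quasi-affine varieties.

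\emph{Step 3 (conclusions).} A nonempty Zariski-open subset of the irreducible smooth surface $\mathbb C^2$ is irreducible, smooth and of dimension $2$, and these properties are preserved under isomorphism.

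As a cross-check on smoothness, I would also compute the Jacobian of $(f_1-\gamma_1,\ldots,f_{m-2}-\gamma_{m-2})$ with respect to the variables $b_2,\ldots,b_{m-1}$. It is lower triangular with diagonal entries $\partial f_i/\partial b_{i+1}=\tfrac12B_ib_i\neq0$ on $X_\gamma$, so it has full rank $m-2$ everywhere on $X_\gamma$. By the Jacobian criterion this gives smoothness of dimension $m-(m-2)=2$ directly, even before irreducibility is known.

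The only delicate point is Step 1. One has to check that the nonvanishing conditions really cut out a Zariski-open set, with denominators controlled inductively, and that no point of $X_\gamma$ escapes $W$. Both follow from $\gamma_i\neq0$. I expect this bookkeeping, rather than any deep argument, to be the main work.
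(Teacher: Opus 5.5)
Your proposal is correct and follows the paper's argument. The paper likewise uses $\gamma_i\neq0$ to force $b_i\neq0$ on $X_\gamma$, builds the parameter set $\Omega\subset\mathbb C^2$ inductively from the recursion \eqref{eq:planar-recursion} (nonempty because $S(U,\gamma)\neq\emptyset$), and shows that $(x,y)\mapsto(r_1,\ldots,r_{m-1},r_0)$ and $b\mapsto(b_m,b_1)$ are mutually inverse regular maps; your triangular Jacobian check is a valid extra confirmation of smoothness that the paper does not use.
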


\begin{proof}
	For $b\in X_\gamma$ we have
	\[
	b_i\neq0,\qquad i=1,\ldots,m-2,
	\]
	since otherwise \eqref{eq:planarconevolumes} would give
	$f_i(b)=0\neq\gamma_i$. Hence \eqref{eq:planar-recursion} shows that every
	$b\in X_\gamma$ is uniquely determined by the two coordinates $b_m$ and
	$b_1$.
	
	Set
	\[
	\Omega_1\coloneqq \mathbb C^2,\qquad
	r_0(x,y)\coloneqq x,\qquad r_1(x,y)\coloneqq y,
	\]
	and, for $i=1,\ldots,m-2$, define recursively
	\[
	\Omega_{i+1}\coloneqq \{(x,y)\in\Omega_i:r_i(x,y)\neq0\}
	\]
	and, for $(x,y)\in\Omega_{i+1}$,
	\[
	r_{i+1}(x,y)
	\coloneqq \frac{A_i r_i(x,y)-C_i r_{i-1}(x,y)}{B_i}
	+\frac{2\gamma_i}{B_i r_i(x,y)}.
	\]
	By induction, $r_i$ is regular on $\Omega_i$, and therefore
	$\Omega_{i+1}$ is Zariski open in $\Omega_i$, hence also in $\mathbb C^2$.
	Moreover, all these sets are nonempty. Indeed, since
	$\gamma\in C_{\cv}(U)$, there exists $b=(b_1,\ldots,b_m)\in S(U,\gamma)$,
	and \eqref{eq:planar-recursion} shows that $(b_m,b_1)\in\Omega_i$ for every
	$i$. Thus
	\[
	\Omega\coloneqq \Omega_{m-1}
	\]
	is a nonempty Zariski-open subset of $\mathbb C^2$. Consequently,
	$\Omega$ is smooth and irreducible and
	\[
	\dim_{\mathbb C}\Omega=2.
	\]
	
	By \eqref{eq:planar-recursion}, the map
	\[
	\Phi:\Omega\longrightarrow X_\gamma,\qquad
	(x,y)\longmapsto
	\bigl(r_1(x,y),\ldots,r_{m-1}(x,y),r_0(x,y)\bigr)
	\]
	is well defined and regular. Conversely, if $b\in X_\gamma$, then
	$b_i\neq0$ for $i=1,\ldots,m-2$, and
	\eqref{eq:planar-recursion} shows recursively that
	\[
	\Phi(b_m,b_1)=b.
	\]
	Thus $\Phi$ is bijective, with inverse
	\[
	\Phi^{-1}(b_1,\ldots,b_m)=(b_m,b_1),
	\]
	which is regular. Hence $\Phi$ is an isomorphism between $\Omega$ and
	$X_\gamma$. Therefore $X_\gamma$ is smooth and irreducible and
	\[
	\dim_{\mathbb C}X_\gamma=2.
	\]
\end{proof}

We need one further observation. It shows that the first $m-2$ entries of a
cone-volume vector do not determine the last two entries.

\begin{lemma}
	\label{lem:cone_volume_vectors_with_different_last_entries}
	Let $U\in\U(2,m)$ and $\gamma\in C_{\cv}(U)$. Then there exists
	$\gamma'\in C_{\cv}(U)$ such that
	\[
	\gamma_i'=\gamma_i,\qquad i=1,\ldots,m-2, \text{ and } 
	\gamma_i'\neq\gamma_i,\qquad i=m-1,m.
	\]
\end{lemma}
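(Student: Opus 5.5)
We plan to perturb the polygon locally near the vertex between the last two edges. Since we are in the plane, $f_i$ depends only on $b_{i-1},b_i,b_{i+1}$ by \eqref{eq:planarconevolumes}. The natural idea is to vary $b_m$ or $b_{m-1}$, but that changes $\gamma_{m-2}$ or $\gamma_1$. Instead we fix $b_1,\dots,b_{m-2}$ together with the data $(b_m,b_1)$, and move along the two-dimensional family $X_\gamma$ of Lemma~\ref{lem:Variety_of_cone_volumes}.

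Concretely, start from some $b\in S(U,\gamma)$, which exists because $\gamma\in C_{\cv}(U)$. Consider nearby real points $(x,y)$ close to $(b_m,b_1)$, and set $b(x,y)=\Phi(x,y)$, where $\Phi$ is the map of Lemma~\ref{lem:Variety_of_cone_volumes}. By construction this satisfies $f_i(b(x,y))=\gamma_i$ for $i\le m-2$, and $b(b_m,b_1)=b$. By continuity of the rational functions $r_i$, for $(x,y)$ close to $(b_m,b_1)$ the vector $b(x,y)$ lies in $\R^m_{>0}$. Since the interior of the proper type-cone $A^*$ is open and $b\in\inter A^*$, we also get $b(x,y)\in\inter A^*$. (If $b$ merely lies on the boundary of some $A$, first note that for $\gamma\in\R^m_{>0}$ all edges have positive length, so $b\in\inter A^*$ automatically.) Hence $\gamma'(x,y):=f_{\cv}(b(x,y))$ belongs to $C_{\cv}(U)$ and agrees with $\gamma$ in the first $m-2$ entries.

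It remains to choose $(x,y)$ so that both $\gamma'_{m-1}\ne\gamma_{m-1}$ and $\gamma'_m\ne\gamma_m$. The functions $g_{m-1}(x,y)=f_{m-1}(b(x,y))-\gamma_{m-1}$ and $g_m(x,y)=f_m(b(x,y))-\gamma_m$ are real-analytic on a neighbourhood of $(b_m,b_1)$. Each of them is either identically zero there or has a nowhere-dense zero set. If neither vanishes identically, the union of the two zero sets is nowhere dense, and any point outside it works.

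So the main obstacle is showing that neither $g_{m-1}$ nor $g_m$ vanishes identically near $(b_m,b_1)$. We use scaling for this. Along the curve $t\mapsto tb$, the first $m-2$ equations fail, so scaling cannot be used directly. Instead we argue by volume. Fix $y=b_1$ and vary only $x=b_m$. Then $f_1(b)=\tfrac12 b_1(-A_1b_1+B_1b_2+C_1b_m)=\gamma_1$ forces $b_2$ to change affinely with slope $-C_1/B_1\ne0$. The later $b_i$ then change as well, but we need $f_m$ itself. We have
\[
f_m=\tfrac12 b_m(-A_mb_m+B_mb_1+C_mb_{m-1}),
\]
with $b_1$ fixed, and the total volume equals $\sum_i\gamma'_i$. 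If $g_m\equiv0$ and $g_{m-1}\equiv0$ on an open set, then $f_{\cv}$ would be constant on a two-dimensional real family. This would give $\dim^*S(U,\gamma)=2$ locally, and in particular the total volume would be constant on that family.

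We would rule this out by a direct computation. The approach is to show that $\partial_x g_m\ne 0$ at a generic point, or more simply that $g_m\equiv0$ and $g_{m-1}\equiv 0$ cannot both hold. We first try the case where only one of them vanishes identically. Since $\Phi$ is an isomorphism onto the irreducible surface $X_\gamma$, the identity $g_m\equiv0$ near a real point would extend to all of $X_\gamma$ by irreducibility and analytic continuation. We then look for a point of $X_\gamma$ (possibly complex, e.g.\ letting $x\to0$ so that $f_m=\tfrac12 x(\cdots)\to0\ne\gamma_m$) where $g_m\ne0$. Indeed, $f_m$ carries the explicit factor $b_m=x$. Choosing $x$ small with $(x,y)\in\Omega$ gives $f_m\to0$, provided $b_{m-1}=r_{m-1}(x,y)$ stays bounded, which we check holds on a suitable complex curve where $y$ is generic. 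Hence $g_m\not\equiv0$. Symmetrically, $f_{m-1}$ carries the factor $b_{m-1}=r_{m-1}$. Choosing points of $\Omega$ near the zero locus of $r_{m-1}$, which is nonempty because $r_{m-1}$ is a nonconstant rational function, shows that $g_{m-1}\not\equiv0$. This Zariski-density argument on $X_\gamma$ is the step we expect to require the most care.
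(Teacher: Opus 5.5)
Your reduction is sound. For $(x,y)\in\Omega$ close to $(b_m,b_1)$, the point $\Phi(x,y)$ stays in $\inter A^*\cap\R^m_{>0}$, so $\gamma'(x,y)\in C_{\cv}(U)$ agrees with $\gamma$ in the first $m-2$ entries. Everything then reduces to showing that neither $g_{m-1}$ nor $g_m$ vanishes identically on $\Omega$. That step, however, is not proved, and the two devices you propose fail in general.

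Made precise, the devices say: $g_m\not\equiv0$ if $\{x=0\}\cap\Omega\neq\emptyset$ (there $f_m=0$, since $r_{m-1}$ is regular), and $g_{m-1}\not\equiv0$ if $r_{m-1}$ has a zero in $\Omega$. Neither hypothesis holds for all $U,\gamma$. Nonconstancy of a rational function does not give a zero inside a Zariski-open set: $y$ has none on $\{y\neq0\}$.

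For the first device, take $m=5$, $U=(u_1,u_2,-u_1,u_4,-u_2)$ and $\gamma=(1,\dots,1)$, which satisfies the subspace concentration condition. Then $A_1=A_2=0$ and $C_2=B_1$, so $r_2=-\frac{C_1}{B_1}x+\frac{2\gamma_1}{B_1y}$, $r_3=-\frac{C_2}{B_2}y+\frac{2\gamma_2}{B_2r_2}$, and $r_3(0,y)\equiv0$. Hence the line $\{x=0\}$ misses $\Omega$. For every fixed $y\neq0$ one has $r_3=c(y)x+O(x^2)$ with $c(y)\neq0$. Therefore, as $x\to0$, $b_{m-1}=r_4\to\infty$ and $f_5\to C_5\gamma_3/(B_3c(y))\neq0$. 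Your proviso that $b_{m-1}$ stays bounded fails for every $y$.

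For the second device, take $U=(u_1,u_2,-u_1,-u_2,u_5)$ with the same $\gamma$. Here $A_2=A_3=0$ and $C_3=B_2$, and a short computation gives $r_4=\frac{B_2C_2\,y\,r_2^2}{B_3(2\gamma_2-C_2yr_2)}$. Its zeros lie on $\{r_2=0\}$, outside $\Omega$, where $r_3=b_{m-2}$ has a pole. So the factor $-A_4r_4+B_4x+C_4r_3$ of $f_4$ is unbounded there, and your argument gives no conclusion.

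Such degenerate configurations (antipodal pairs combined with equalities among the $\gamma_i$) need a separate, finer analysis that the proposal does not supply. You also cannot fall back on the remark that $\dim^*S(U,\gamma)$ would be $2$. In the paper, the bound $\dim^*S(U,\gamma)\leq1$ of Theorem~\ref{thm:planar_dim_leq_1} is derived from this very lemma, so that would be circular. Excluding simultaneous vanishing would not suffice anyway.

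The paper avoids analysing $X_\gamma$ altogether. For irreducible $U$ it constructs $\gamma'$ directly. Let $\nu_{m-1},\nu_m$ be the masses of $\gamma_1,\dots,\gamma_{m-2}$ on the lines $\lin\{u_{m-1}\}$ and $\lin\{u_m\}$. The paper sets $\gamma'_{m-1}=q-\nu_{m-1}$ and $\gamma'_m=q-\nu_m$ for large $q$, checks the strict subspace concentration inequalities, and invokes the sufficiency theorem of Chen--Li--Zhu to get $\gamma'\in C_{\cv}(U)$. The parallelogram case uses $(\gamma_1,\gamma_2,\gamma_3+1,\gamma_4+1)$. Conversely, any $b'\in S(U,\gamma')$ gives a point $(b'_m,b'_1)\in\Omega$ where $g_{m-1},g_m\neq0$, so your framework is consistent. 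Closing it, though, requires either such a global existence input or a complete treatment of all the degenerate cases.
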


For the proof we recall the subspace concentration condition in the discrete
setting. Let $U=(u_1,\ldots,u_m)\in\U(n,m)$ and
$\eta\in\R^m_{>0}$. We say that $\eta$ satisfies the
\emph{subspace concentration condition} with respect to $U$ if
\[
\sum_{u_i\in L}\eta_i
\leq \frac{\dim L}{n}\sum_{i=1}^m\eta_i
\]
for every linear subspace $L\subset\R^n$ with $0<\dim L<n$, and equality
can occur if and only if $L=\lin(L\cap U)$ and 
\[
\R^n
=\lin(L\cap U)
\oplus
\lin\bigl(U\setminus(L\cap U)\bigr).
\]
In particular, if $n=2$ and $U$ is irreducible, then the subspace
concentration condition is equivalent to the strict inequalities
\begin{equation}
	\sum_{u_i\in L}\eta_i
	<\frac12\sum_{i=1}^m\eta_i
	\label{eq:subspace_irr_dim2}
\end{equation}
for every one-dimensional subspace $L\subset\R^2$.

\begin{proof}[Proof of Lemma~\ref{lem:cone_volume_vectors_with_different_last_entries}]
	If $U$ is reducible, then $m=4$ and, with the cyclic ordering of the outer
	unit normals, we have 
	$u_3=-u_1$ and $u_4=-u_2$. 
	For every cone-volume vector of a parallelogram one has
	\[
	\gamma_1+\gamma_3=\gamma_2+\gamma_4.
	\]
	The vector
	\[
	\gamma'\coloneqq (\gamma_1,\gamma_2,\gamma_3+1,\gamma_4+1)
	\]
	also satisfies the subspace concentration condition with respect to $U$. By the sufficiency of this
	condition for the discrete logarithmic Minkowski problem, proved by Chen, Li and Zhu
	\cite{ChenLiZhu2019}, it belongs to $C_{\cv}(U)$ and has the required
	properties.
	
	Assume now that $U$ is irreducible. Set
	\[
	\tau\coloneqq \sum_{i=1}^{m-2}\gamma_i,
	\qquad
	L_{m-1}\coloneqq \lin\{u_{m-1}\},
	\qquad
	L_m\coloneqq \lin\{u_m\},
	\]
	and
	\[
	\nu_{m-1}
	\coloneqq \sum_{\substack{1\leq i\leq m-2\\u_i\in L_{m-1}}}\gamma_i,
	\qquad
	\nu_m
	\coloneqq \sum_{\substack{1\leq i\leq m-2\\u_i\in L_m}}\gamma_i.
	\]
	Since $u_{m-1}$ and $u_m$ are adjacent outer unit normals,
	$L_{m-1}\neq L_m$. Moreover,
	\[
	\nu_{m-1}+\nu_m<\tau.
	\]
	Indeed, equality would imply, since all $\gamma_i$ are positive, that
	$u_1,\ldots,u_{m-2}\in L_{m-1}\cup L_m$. Hence
	$U\subset L_{m-1}\cup L_m$, contradicting the irreducibility of $U$.
	
	For every one-dimensional subspace $L\subset\R^2$, set
	\[
	\nu_L
	\coloneqq \sum_{\substack{1\leq i\leq m-2\\u_i\in L}}\gamma_i.
	\]
	Choose $q$ sufficiently large such that
	\[
	\nu_L
	<q+\frac{\tau-\nu_{m-1}-\nu_m}{2}
	\]
	for every one-dimensional subspace
	$L\neq L_{m-1},L_m$ with $L\cap U\neq\emptyset$, and, in addition,
	\[
	q>\gamma_{m-1}+\nu_{m-1},
	\qquad
	q>\gamma_m+\nu_m.
	\]
	Define
	\[
	\gamma_i'\coloneqq \gamma_i,\qquad i=1,\ldots,m-2, 
	\text{ and } 
	\gamma_{m-1}'\coloneqq q-\nu_{m-1},
	\,
	\gamma_m'\coloneqq q-\nu_m.
	\]
	By the choice of $q$ we have 
	\[
	\gamma_{m-1}'>\gamma_{m-1}>0,
	\qquad
	\gamma_m'>\gamma_m>0.
	\]
	Furthermore,
	\[
	\frac12\sum_{i=1}^m\gamma_i'
	=q+\frac{\tau-\nu_{m-1}-\nu_m}{2}.
	\]
	On the two distinguished subspaces we have, by construction,
	\[
	\sum_{u_i\in L_{m-1}}\gamma_i'
	= 
	\sum_{u_i\in L_m}\gamma_i'
	=q < \frac12\sum_{i=1}^m\gamma_i'.
	\]
	Hence the strict inequality \eqref{eq:subspace_irr_dim2} holds on
	$L_{m-1}$ and $L_m$. For every other one-dimensional subspace
	$L$ with $L\cap U\neq\emptyset$, the masses on $L$ are unchanged, and thus
	\[
	\sum_{u_i\in L}\gamma_i'
	=\nu_L < \frac12\sum_{i=1}^m\gamma_i'.
	\]
	Therefore $\gamma'$ satisfies the subspace concentration condition. 
Again,  by the sufficiency of the
	subspace concentration condition \cite{ChenLiZhu2019}, we have 
	$\gamma'\in C_{\cv}(U)$, 
	which proves the assertion.
\end{proof}

We are now ready to prove Theorem~\ref{thm:planar_dim_leq_1}.

\begin{proof}[Proof of Theorem~\ref{thm:planar_dim_leq_1}]
	Set
	\[
	X_\gamma
	\coloneqq \mathcal V_{\mathbb C}
	(f_1-\gamma_1,\ldots,f_{m-2}-\gamma_{m-2})
	\]
	and
	\[
	Y_\gamma
	\coloneqq \mathcal V_{\mathbb C}
	(f_1-\gamma_1,\ldots,f_{m-1}-\gamma_{m-1}).
	\]
	By Lemma~\ref{lem:Variety_of_cone_volumes}, $X_\gamma$ is irreducible and
	$\dim_{\mathbb C}X_\gamma=2$. 
	Lemma~\ref{lem:cone_volume_vectors_with_different_last_entries} implies that
	$Y_\gamma$ is a proper algebraic subset of $X_\gamma$. Indeed, let
	$\gamma'\in C_{\cv}(U)$ be as in that lemma and choose
	$b'\in S(U,\gamma')$. Then
	$b'\in X_\gamma$, 
	whereas $\gamma'_{m-1}\neq\gamma_{m-1}$ gives
	$
	b'\notin Y_\gamma$. 
	Since every proper algebraic subset of an irreducible algebraic set has
	strictly smaller dimension,
	\[
	\dim_{\mathbb C}Y_\gamma\leq1.
	\]
	Finally, $
	S(U,\gamma)\subset Y_\gamma\cap\R^m$,  
	and using the standard comparison between the semialgebraic dimension
	of the real points and the complex dimension of a complex algebraic set,
	\[
	\dim^*S(U,\gamma)
	\leq\dim^*(Y_\gamma\cap\R^m)
	\leq\dim_{\mathbb C}Y_\gamma
	\leq1.
	\]
	
	The bound is sharp. Indeed, for a reducible $U\in\U(2,4)$ the irreducible
	decomposition has two components, and
	\cite[Proposition~5.1]{baumbach2025polynomialinequalitiesconevolumespolytopes}
	gives
	\[
	\dim^*S(U,\gamma)\geq1.
	\]
	Together with the upper bound just proved, this yields
	$\dim^*S(U,\gamma)=1$.
\end{proof}

\subsection{Lower bounds for non-uniqueness}
\label{subsec:planar-nonuniqueness}

The aim of this subsection is to show that, once at least four outer unit
normals are prescribed, uniqueness cannot be guaranteed by conditions on
$U$ alone.  The main ingredient is a stability argument showing that
non-uniqueness persists when additional outer unit normals are inserted.
We first collect the regularity properties of the cone-volume map that will
be used in this argument.  These properties are valid in arbitrary
dimension.

For $U\in\U(n,m)$ and $A\in\mathcal A(U)$, we recall that
$\det^A=\det Df_{\cv}^A$ denotes the Jacobian determinant of the
polynomial cone-volume map on $A$. Set
\[
W(U)\coloneqq 
\bigcup_{\substack{A\in\mathcal A(U), \det^A\not\equiv0}}
\bigl(\inter A\setminus\mathcal V(\det^A)\bigr).
\]

\begin{proposition}
	\label{prop:irreducible-regular-point}
	Let $U\in\U(n,m)$. Then
	$U$ is irreducible if and only if $W(U)\neq\emptyset$.
\end{proposition}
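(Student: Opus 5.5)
We prove the two implications separately. We use the decomposition of the cone-volume cone in Proposition~\ref{PropositionSeparatorsPsccAndConeVolumesDimension}, together with the fact that $\det^A\not\equiv0$ on some type-cone if and only if the polynomial map $f_{\cv}^A$ has full rank $m$ somewhere.

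\emph{Reducible $\Rightarrow W(U)=\emptyset$.} Let $S$ be a separator of $U$, with $\dim\lin S=n_1$ and $\dim\lin(U\setminus S)=n_2$, where $n_1+n_2=n$. Then $P(U,b)=Q_1\oplus Q_2$ with $Q_1\subset\lin S$ and $Q_2\subset\lin(U\setminus S)$ (after the appropriate linear identification). The dilation argument from the introduction gives that $\gamma(U,b)$ is invariant along the one-parameter family obtained by scaling the $S$-coordinates of $b$ by $t^{n_2}$ and the remaining coordinates by $t^{-n_1}$. This family is a smooth curve $t\mapsto b(t)$ in $\R^m_{>0}$. For $b$ in the interior of any type-cone $A$, the curve stays in $\inter A$ for $t$ near $1$, because strong isomorphism is preserved under this block scaling. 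Differentiating $f_{\cv}^A(b(t))=\gamma$ at $t=1$ gives $Df_{\cv}^A(b)\,v=0$ with
\[
v=\bigl(n_2 b_S,\,-n_1 b_{U\setminus S}\bigr)\neq0 .
\]
Hence $\det^A\equiv0$ on $\inter A$, and thus on $A$ as a polynomial. So $W(U)=\emptyset$. (Alternatively: by Proposition~\ref{PropositionSeparatorsPsccAndConeVolumesDimension}, $C_{\cv}(U)$ lies in the hypersurface $\sum_{u_i\in S}\gamma_i=\frac{n_1}{n}\sum_i\gamma_i$, so the image of each $f_{\cv}^A$ has empty interior. By Sard's theorem this forces $\det^A\equiv0$.)

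\emph{Irreducible $\Rightarrow W(U)\neq\emptyset$.} Here we argue by dimension of the image. For irreducible $U$, the set $C_{\cv}(U)$ is described by the subspace concentration condition (the sufficiency part of \cite{ChenLiZhu2019}, quoted above). This set contains a nonempty open subset of $\R^m_{>0}$, since the strict inequalities for irreducible $U$ define an open cone; for example, a small neighbourhood of a uniform-like vector satisfying all strict inequalities works. Now $\R^m_{>0}$ is the finite union of the sets $A\cap\R^m_{>0}$ over $A\in\mathcal A(U)$. By continuity of $f_{\cv}$, the image of boundaries of type-cones is contained in the images of the closed cones under the polynomial maps $f_{\cv}^A$. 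Thus the open set is covered by finitely many sets $f_{\cv}^A(A)$. If every $\det^A$ vanished identically, each $f_{\cv}^A$ would have rank $<m$ everywhere, and its image would be a semialgebraic set of dimension $<m$, hence of measure zero. A finite union of such sets cannot contain an open set, which is a contradiction. So some $A$ has $\det^A\not\equiv0$. Then $\mathcal V(\det^A)$ is a proper algebraic hypersurface, and since $\inter A$ is a nonempty open set, $\inter A\setminus\mathcal V(\det^A)\neq\emptyset$.

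The main obstacle is the step that $C_{\cv}(U)$ has nonempty interior for irreducible $U$. This requires checking that the subspace concentration inequalities are all strict and open, and that the equality cases do not occur without separators. Since only finitely many subspaces $L$ with $L\cap U\neq\emptyset$ matter, the strict inequalities define an open set. This set is nonempty, for instance by perturbing the cone-volume vector of any polytope $P(U,b)$ with all facets present.
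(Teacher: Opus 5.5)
Your reducible direction is correct, and it is more explicit than the paper's. The paper argues by contraposition: a point $b\in W(U)$ gives, via the inverse function theorem, an open subset of $C_{\cv}(U)$. This is impossible for reducible $U$, because by Proposition~\ref{PropositionSeparatorsPsccAndConeVolumesDimension} all cone-volume vectors satisfy $\sum_{u_i\in S}\gamma_i=\frac{n_1}{n}\|\gamma\|_1$. You instead exhibit the kernel vector $(n_2b_S,-n_1b_{U\setminus S})$ of $Df_{\cv}^A(b)$, coming from the dilations $t^{n_2}Q_1\oplus t^{-n_1}Q_2$. Your parenthetical alternative is the paper's argument, but the tool there is the inverse function theorem, not Sard's theorem. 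Your irreducible direction has the same skeleton as the paper's: an open subset of $C_{\cv}(U)$, then Sard's theorem applied to each $f_{\cv}^A$, which is fine.

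The gap is in how you produce that open subset. The paper simply quotes \cite[Proposition~2.7]{baumbach2025polynomialinequalitiesconevolumespolytopes}: for irreducible $U$, $C^1_{\cv}(U)$ contains a relatively open $(m-1)$-dimensional set. You route through the sufficiency of the subspace concentration condition \cite{ChenLiZhu2019}, which is legitimate, but it needs one point satisfying all the strict inequalities. Neither of your justifications supplies one.

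\begin{itemize}
\item The ``uniform-like vector'' is circular. For a trapezoid, $(1,1,1,1)$ gives equality on $\lin u_1$.
\item Perturbing the cone-volume vector of an arbitrary $P(U,b)$ with all facets present fails. Cone-volume vectors of non-centred polytopes need not satisfy the subspace concentration condition at all. For the trapezoid $\{y\le\delta,\ y\ge-1,\ \pm x+y\le 2\delta\}$ with $0<\delta<1$, the two parallel edges carry the fraction $(1+\delta)/(1+3\delta)>\frac12$ of the area. A small perturbation cannot repair a violated inequality.
\end{itemize}

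Here is one fix. Choose $b$ so that $P(U,b)$ is centred; translating by the centroid $c$ gives $P(U,b)-c=P(U,b-U^\intercal c)$ with $b-U^\intercal c>0$ and the same facets. Then invoke Henk--Linke \cite{henk2014cone}: centred polytopes satisfy the subspace concentration condition, with equality only along separators, hence strictly for irreducible $U$. Openness of the strict region together with \cite{ChenLiZhu2019} then yields the open subset. Alternatively, cite the statement the paper uses.

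A minor point: in your covering argument, restrict to full-dimensional type-cones. Then the $A$ you obtain with $\det^A\not\equiv0$ satisfies $\inter A\neq\emptyset$.
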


\begin{proof}
	Suppose first that $U$ is irreducible. By
	\cite[Proposition~2.7]
	{baumbach2025polynomialinequalitiesconevolumespolytopes},
	the normalized cone-volume set, i.e., $C_{\cv}(U)\cap \{y\in\R^m : y_1+\dots+y_m=1\}$ 
	contains a relatively open subset of dimension $m-1$.
	Hence, $C_{\cv}(U)$ contains an open subset of dimension $m$. 
	
	If $W(U)=\emptyset$, then $\det^A\equiv0$ for every
	full-dimensional type-cone $A\in\mathcal A(U)$. Hence every point of
	$\inter A$ is a critical point of the polynomial map $f_{\cv}^A$,
	and Sard's theorem (see, e.g., \cite{979f1be4-becf-36f2-be30-0bebf5263aa4}) 
	implies that $f_{\cv}^A(\inter A)$ has Lebesgue
	measure zero. The remaining parts of the type-cone decomposition
	have dimension at most $m-1$, and their images under the corresponding
	polynomial maps have also 
	Lebesgue measure zero. Since there are only finitely many type-cones,
	$C_{\cv}(U)$ would have Lebesgue measure zero, contradicting the fact
	that it has nonempty interior. Thus $W(U)\neq\emptyset$.
	
	Conversely, suppose that $W(U)\neq\emptyset$ and choose $b\in W(U)$.
	Then $b\in\inter A$ for some type-cone $A$ and
	$\det Df_{\cv}^A(b)\neq0$. By the inverse function theorem,
	$C_{\cv}(U)$ contains a nonempty open subset of $\R^m$ which again by 
	 \cite[Proposition~2.7]
	{baumbach2025polynomialinequalitiesconevolumespolytopes}
	shows that $U$ is irreducible.
\end{proof}

For an irreducible set $U\in\U(n,m)$ we set 
\[
B(U)\coloneqq 
f_{\cv}^U\bigl(\R^m_{>0}\setminus W(U)\bigr).
\]

\begin{proposition}
	\label{prop:regular-points}
	Let $U\in\U(n,m)$ be irreducible. Then the following holds: 
	\begin{enumerate}
		\item For every $b\in W(U)$, the map $f^U_{\cv}$ is differentiable
		and locally invertible at $b$. 
		\item If $\gamma\in C_{\cv}(U)\setminus B(U)$, then
		$S(U,\gamma)\subset W(U)$ and $|S(U,\gamma)|<\infty$.
				
		\item $W(U)$ is open and dense in
		\[
		\bigcup_{A\in\mathcal A(U),\, 
				\det^A\not\equiv0}\inter A.
		\]
		\item $B(U)$ is semialgebraic and has Lebesgue measure zero in
		$\R^m$.
	\end{enumerate}
\end{proposition}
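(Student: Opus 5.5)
We treat the four assertions in order; the basic tools are the inverse function theorem on each type-cone, Sard's theorem, and the Tarski--Seidenberg principle.

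For (1), let $b\in W(U)$. Then $b\in\inter A$ for some type-cone $A$ with $\det^A(b)\neq0$. Since $\inter A$ is open, the map $f^U_{\cv}$ coincides with the polynomial map $f^A_{\cv}$ on a neighbourhood of $b$. Hence $f^U_{\cv}$ is smooth at $b$, and the inverse function theorem gives local invertibility.

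For (3), the set $\inter A\setminus\mathcal V(\det^A)$ is open, so $W(U)$ is a finite union of open sets and is open. Density inside $\bigcup_{\det^A\not\equiv0}\inter A$ also works type-cone by type-cone. If $\det^A\not\equiv0$, then $\mathcal V(\det^A)$ is a proper real algebraic set and has empty interior. Therefore $\inter A\setminus\mathcal V(\det^A)$ is dense in $\inter A$.

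For (2), let $\gamma\in C_{\cv}(U)\setminus B(U)$ and $b\in S(U,\gamma)$. If $b\notin W(U)$, then $\gamma=f_{\cv}(b)\in B(U)$, which is a contradiction; hence $S(U,\gamma)\subset W(U)$. By (1), every point of $S(U,\gamma)$ is isolated in the fiber, because $f_{\cv}$ is injective on a neighbourhood of it. The fiber is semialgebraic, and a semialgebraic set all of whose points are isolated has finitely many connected components, each of them a point. Hence it is finite, in agreement with \eqref{eq:sadim}.

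For (4), semialgebraicity comes from the description of $W(U)$ as a finite Boolean combination of polynomial conditions. The type-cones are polyhedral and the $\det^A$ are polynomials, so $W(U)$ is semialgebraic, and so is its complement $\R^m_{>0}\setminus W(U)$. The map $f^U_{\cv}$ is semialgebraic, since its graph is the finite union of the graphs of the $f^A_{\cv}$ restricted to the relatively open cells of $\mathcal A(U)$. By Tarski--Seidenberg, the image $B(U)$ is semialgebraic.

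For the measure-zero claim, split $\R^m_{>0}\setminus W(U)$ into three pieces:
\begin{enumerate}
\item the sets $\inter A\cap\mathcal V(\det^A)$ with $\det^A\not\equiv0$; these consist of critical points of $f^A_{\cv}$, so their images are null by Sard's theorem;
\item the sets $\inter A$ with $\det^A\equiv0$; these are again critical sets, so Sard applies as well;
\item the lower-dimensional cells $\R^m_{>0}\setminus\bigcup_A\inter A$, each contained in a hyperplane; on such a cell $f_{\cv}$ agrees with a polynomial (piecewise linear-algebraic) map on an $(m-1)$-dimensional set, whose image is null. This follows as in the proof of Proposition~\ref{prop:irreducible-regular-point}: a $C^1$ image of a set of dimension $<m$ in $\R^m$ is null, or alternatively the image is semialgebraic of dimension $\le m-1$.
\end{enumerate}
Since there are finitely many pieces, $B(U)$ is null.

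The main subtlety lies in this last step. On a lower-dimensional cell, $f^U_{\cv}$ agrees with the polynomial of an adjacent full-dimensional cone by continuity; this requires continuity of the cone-volume map up to the boundary of the type-cones. The cleanest route is the semialgebraic dimension bound: the image of a semialgebraic set of dimension $\le m-1$ under a semialgebraic map has dimension $\le m-1$, and is therefore null.
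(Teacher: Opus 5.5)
Your proof is correct and follows the paper's argument essentially step by step. You use the inverse function theorem for (1) and (2), with finiteness coming from the fact that semialgebraic sets have finitely many connected components. For (3) you use that the zero set of a nonzero polynomial has empty interior. For (4) you use Tarski--Seidenberg together with the decomposition of $\R^m_{>0}\setminus W(U)$ into type-cone boundaries, the sets $\inter A\cap\mathcal V(\det^A)$ with $\det^A\not\equiv0$, and the cones with $\det^A\equiv0$.

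The only deviation is minor. You handle $\inter A\cap\mathcal V(\det^A)$ by Sard's theorem, whereas the paper uses the bound $\dim^*\le m-1$; both are valid.
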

\begin{proof}
	 i) follows directly from the inverse function
	theorem. For ii), 
	if $\gamma\notin B(U)$, then by the definition of $B(U)$ every
	$b\in S(U,\gamma)$ belongs to $W(U)$. Hence
	$\det Df_{\cv}^U(b)\neq0$, and the inverse function theorem yields a
	neighborhood $V_b$ of $b$ such that
	\[
	S(U,\gamma)\cap V_b=\{b\}.
	\]
	Thus every point of $S(U,\gamma)$ is isolated. Since
	$S(U,\gamma)$ is semialgebraic, it follows that $S(U,\gamma)$ is
	finite (see, e.g., \cite[Sections~2.3 and~2.8]{bochnak2013real}).
	
	For iii), if ${\det}^A\not\equiv0$, then its zero set has empty interior,
	so $\inter A\setminus\mathcal V({\det}^A)$ is open and dense in
	$\inter A$.  
	Finally, for iv) we note that $B(U)$ is semialgebraic by the
	Tarski--Seidenberg theorem as the complement of $W(U)$ is a finite union
	of type-cone boundaries, zero sets $\mathcal V({\det}^A)$ with
	${\det}^A\not\equiv0$, and type-cones on which ${\det}^A\equiv0$. Except for the latter all these sets have semi-algebraic dimension at most $m-1$ 
	and so do their images under $f^U_{\cv}$. If $A$ is a full-dimensional type cone on which ${\det}^A\equiv0$ then, as before, 
	Sard’s theorem shows that $f^A_{\mathrm{cv}}(\operatorname{int}A)$ has Lebesgue measure zero. Since this image is semialgebraic, it has semialgebraic dimension at most $m-1$.  
	Hence $\dim^*B(U)\leq m-1$ and it has Lebesgue measure zero; see, for example,
	\cite[Section~2.8]{bochnak2013real}. 	
\end{proof}
Before we continue, we present a short example. 
\begin{example}\label{exp:5.4}
	Let
	\begin{equation}
		U = \begin{pmatrix}
			0&\frac{\sqrt{2}}{2}&0&-1\\
			1&\frac{\sqrt{2}}{2}&-1&0
		\end{pmatrix} = (u_1,u_2,u_3,u_4),
	\end{equation}    
	and $A^*$ be the only proper type-cone, that is, $P(U,b)$ is a trapezoid for all $b\in\mathrm{int}(A^*)$. Then the derivative $Df_{\cv}(b)$ for $b\in\inter A^*$ is
	\[
	\begin{pmatrix}
		-b_1+\frac{\sqrt{2}}{2}b_2+\frac{1}{2}b_4 & \frac{\sqrt{2}}{2}b_1 & 0 & \frac{1}{2}b_1\\
		\frac{\sqrt{2}}{2}b_2 & \frac{\sqrt{2}}{2}(b_1+b_3) & \frac{\sqrt{2}}{2}b_2 & 0\\
		0 & \frac{\sqrt{2}}{2}b_3 & \frac{\sqrt{2}}{2}b_2+b_3+\frac{1}{2}b_4 & \frac{1}{2}b_3\\
		\frac{1}{2}b_4 & 0 & \frac{1}{2}b_4 & \frac{1}{2}(b_1+b_3)
	\end{pmatrix},
	\]
	and the determinant $\det^{A^*}$ is
	\[
	-\frac{1}{8}(b_1+b_3)^2\left(2b_2(b_1-b_3)+\sqrt{2}(2b_1b_3+b_1b_4-b_3b_4)\right),
	\]
	which is a nonzero polynomial.
	
	On the other hand, for 
	\begin{equation}
		U = \begin{pmatrix}
			0&1&0&-1\\
			1&0&-1&0
		\end{pmatrix} = (u_1,u_2,u_3,u_4),
	\end{equation}
	corresponding to a rectangle, the derivative $Df_{\cv}$ is
	\[
	\begin{pmatrix}
		\frac{1}{2}(b_2+b_4) & \frac{1}{2}b_1 & 0 & \frac{1}{2}b_1\\
		\frac{1}{2}b_2 & \frac{1}{2}(b_1+b_3) & \frac{1}{2}b_2 & 0\\
		0 & \frac{1}{2}b_3 & \frac{1}{2}(b_2+b_4) & \frac{1}{2}b_3\\
		\frac{1}{2}b_4 & 0 & \frac{1}{2}b_4 & \frac{1}{2}(b_1+b_3)
	\end{pmatrix}
	\]
	Denoting the rows by $a_i$, $i=1,\dots, 4$ we have $(a_1 + a_3) - (a_2 + a_4) = 0$ and so $\det^{A^*}\equiv 0$.  
\end{example}

Now we come back to the planar case. Then there is a unique proper type-cone, denoted by $A^*$.
If a type-cone is not proper, one of the prescribed facets is absent
throughout its interior, so one component of the corresponding polynomial
cone-volume map vanishes identically and consequently ${\det}^A\equiv0$.
Thus, for irreducible $U\in\U(2,m)$,
\[
W(U)=\inter A^*\setminus\mathcal V({\det}^{A^*}),
\qquad
B(U)=f_{\cv}^U\bigl(\R^m_{>0}\setminus W(U)\bigr).
\]

We next extract from the known quadrilateral results a regular
non-uniqueness example which will serve as the starting point for the
insertion argument.

\begin{theorem}
	\label{thm:4goncase}
	Let $U\in\U(2,4)$ be irreducible. Then there exists
	$\gamma\in C_{\cv}(U)\setminus B(U)$ such that
	$|S(U,\gamma)|\geq2$.
\end{theorem}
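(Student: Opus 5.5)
We start from Theorem~\ref{thm:known-quadrilaterals}: since $U\in\U(2,4)$ is irreducible, every fiber is finite with $|S(U,\gamma)|\in\{1,2,3\}$. The quadrilateral results also give at least one $\gamma_0\in C_{\cv}(U)$ with $|S(U,\gamma_0)|\geq 2$; this is the explicit case analysis recalled in Section~\ref{sec:planar}, and for trapezoids it comes from \cite{Subspace_Concentration_of_Geometric_Measures}. The only issue is that $\gamma_0$ may lie in the exceptional set $B(U)$. So the plan is to perturb $\gamma_0$ off $B(U)$ without losing the two distinct solutions.

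Fix two distinct solutions $b,b'\in S(U,\gamma_0)$. First, $\gamma_0\in\R^4_{>0}$ means all four cone volumes are positive, so all four normals are facet normals of $P(b)$ and $P(b')$. Hence $b,b'$ lie in the closure of the unique proper type-cone $A^*$, and the cone-volume map near them is given by the quadratic polynomial map $f:=f_{\cv}^{A^*}$ of \eqref{eq:planarconevolumes}.

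The key step is to show that the fiber of $f$ over nearby $\gamma$ still has a point near $b$ and a point near $b'$. The obstacle is that $\det^{A^*}$ may vanish at $b$ or $b'$, so the inverse function theorem is not available there. I would argue by degree or openness of $f$ at these points.

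\begin{itemize}
\item Since $U$ is irreducible, Proposition~\ref{prop:irreducible-regular-point} gives $\det^{A^*}\not\equiv0$.
\item By Theorem~\ref{thm:known-quadrilaterals}, the fiber of $f$ over $\gamma_0$ is finite, so $b$ is an isolated point of $f^{-1}(\gamma_0)$ within a small ball $V_b$.
\item An isolated zero of a polynomial map $\R^4\to\R^4$ whose Jacobian determinant is not identically zero has a well-defined local degree. If that degree is nonzero, every $\gamma$ near $\gamma_0$ has a preimage in $V_b$.
\item Similarly for $b'$ in a disjoint ball $V_{b'}$.
\end{itemize}

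If a local degree vanishes, this route fails. In that case I would use an alternative. By the Tarski--Seidenberg theorem, $f(V_b)$ is semialgebraic. I would show that $f(V_b)$ contains an open set accumulating at $\gamma_0$, using that $f$ is a submersion on a dense open subset of $V_b$. I would do the same for $f(V_{b'})$ and choose $\gamma$ in both images.

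Making this intersection nonempty is the genuine obstacle. I would first try to avoid it by choosing $\gamma_0$ well. Theorem~\ref{thm:known-quadrilaterals} allows three solutions, and the $1$-$2$-$3$ criteria recalled in Section~\ref{sec:planar} describe open regions of $C_{\cv}(U)$ where $|S|$ takes a fixed value. If the two- or three-solution locus has nonempty interior in $C_{\cv}(U)$, then any $\gamma_0$ in that interior can be moved slightly off the measure-zero set $B(U)$ (Proposition~\ref{prop:regular-points}(iv)) while remaining in the region, so $|S(U,\gamma)|\geq 2$ is automatic.

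Thus the main work is to check, from the explicit quadrilateral criteria, that the set $\{\gamma:|S(U,\gamma)|\geq2\}$ has nonempty interior. In the trapezoid case this set is a half-open region rather than a lower-dimensional curve. Once this is established, picking $\gamma$ in that interior and outside $B(U)$ completes the proof, with $S(U,\gamma)\subset W(U)$ by Proposition~\ref{prop:regular-points}(ii).
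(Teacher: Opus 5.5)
Your final strategy matches the paper's: exhibit a nonempty open set of $\gamma$ with $|S(U,\gamma)|\geq 2$, then use that $B(U)$ has measure zero. Your trapezoid case also matches the paper's, since the strict version of Pollehn's region, $\gamma_1<\gamma_3$ and $\gamma_1+\gamma_3>\gamma_2+\gamma_4>2\sqrt{\gamma_1\gamma_3}$, is open and nonempty. You can drop the local-degree detour. As you note yourself, it fails when local degrees vanish, and it is not needed.

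The gap is in the general-position case, which is exactly the step you defer as ``the main work''. The explicit criteria of Liu et al.\ recalled in the paper (Table~\ref{tab:liu-quadrilateral}) do not describe open regions of $C_{\cv}(U)$. They are stated only on the hypersurface $\gamma_1\gamma_3=\gamma_2\gamma_4$, which has empty interior in $\R^4$. From them you only get a single point $\gamma^{(0)}$ with $|S(U,\gamma^{(0)})|=3$. For instance, when $0<\lambda<1$ and $\sigma>0$, take $\gamma_4=\gamma_1\gamma_3/\gamma_2$ with $\gamma_2$ large, avoiding $\lambda=\lambda_0$. That point may lie in $B(U)$, which returns you to the stability problem your degree argument could not settle.

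The missing idea is the perturbation argument of Proposition~\ref{prop:liu-open-nonuniqueness}. By Liu et al., $|S(U,\gamma)|$ equals the number of roots of the quartic $q_\gamma$ in \eqref{eq:liu-quartic} lying on one side of $1$ (greater than $1$ if $\sigma>0$). Since $q_\gamma(0)=\gamma_3>0$ and $q_\gamma(1)=-(\rho-1)^2\gamma_4<0$ with $\rho\neq 1$, there is always a fourth root in $(0,1)$. Hence at $\gamma^{(0)}$ all four roots are real and simple. Simple roots depend continuously on the coefficients, and $1$ is never a root. So the three roots beyond $1$ persist for all $\gamma$ in a neighbourhood $\Omega\subset\R^4_{>0}=C_{\cv}(U)$ of $\gamma^{(0)}$, giving $|S(U,\gamma)|=3$ on $\Omega$. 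Only with this open set in hand can you choose $\gamma\in\Omega\setminus B(U)$.
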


\begin{proof}
	An irreducible quadrilateral is either in general position, i.e., any two normals $u_i,u_j$, $i\ne j$,  
	are linearly independent, or a trapezoid.
	If $U$ is in general position, the classification of Liu et
	al.~\cite{LiuLuSunEtAl2024} implies that there exists a nonempty open set
	$\Omega\subset C_{\cv}(U)$ on which $S(U,\gamma)$ has at least two elements
	(in fact, one may choose $\Omega$ so that $|S(U,\gamma)|\geq 3$);
	see Proposition \ref{prop:liu-open-nonuniqueness} in  Appendix~\ref{app:liu-quadrilaterals}.  Since $B(U)$ has measure zero,
	we may choose $\gamma\in\Omega\setminus B(U)$.
	
	Suppose now that $U$ is the set of outer unit normals of a trapezoid.
	After a cyclic relabelling, let $u_1=-u_3$ and assume that
	$u_1,u_2,u_4$ lie in the same semicircle.  By
	\cite[Theorem~2.14]{Subspace_Concentration_of_Geometric_Measures}, one has
	$|S(U,\gamma)|=2$ whenever $\gamma=(\gamma_1,\gamma_2,\gamma_3,\gamma_4)\in\R^4_{>0}$ satisfies
	$\gamma_1<\gamma_3$ and
	$\gamma_1+\gamma_3\geq \gamma_2+\gamma_4>2\sqrt{\gamma_1\gamma_3}$.  The subset defined by the strict
	inequalities $\gamma_1<\gamma_3$ and
	$\gamma_1+\gamma_3>\gamma_2+\gamma_4>2\sqrt{\gamma_1\gamma_3}$ is nonempty and open.  It therefore
	contains a point outside $B(U)$, which proves the assertion.
\end{proof}

The following stability result is the main step for the proof of Theorem \ref{thm:1.7}. Its quantitative form
will also be used in the proof of Corollary~\ref{thm:1.8}. But first we recall the definition of the \emph{Hausdorff} distance: For two convex bodies $K,L\subset\R^n$ 
\begin{equation*}
	d_H\bigl(K,L)\coloneqq \max\{|h_K(u)-h_L(u)|:u\in\sph^{n-1}\}
\end{equation*} 
is called the Hausdorff distance of $K$ and $L$, and it induces a metric on the space of convex bodies in $\R^n$.

\begin{theorem}
	\label{thm:inserting-normal}
	Let $U=(u_1,\ldots,u_m)\in\U(2,m)$ be irreducible and let
	$\gamma\in C_{\cv}(U)\setminus B(U)$. Let
	$v\in\sph^1\setminus U$ and set
	\[
	U^+\coloneqq (u_1,\ldots,u_m,v).
	\]
	Let $S(U,\gamma)=\{b^{(1)},\ldots,b^{(N)}\}$ (cf. Prop.~\ref{prop:regular-points} ii))
	and, for $1\leq k\leq N$, set
	\[
	\widehat b^{(k)}
	\coloneqq 
	\bigl(b^{(k)},h_{P(U,b^{(k)})}(v)\bigr).
	\]
	Then, for every $\varepsilon,\delta>0$, there exist
	\[
	\gamma^+\in C_{\cv}(U^+)\setminus B(U^+)
	\]
	and pairwise distinct
	$b^{+,1},\ldots,b^{+,N}\in S(U^+,\gamma^+)$ such that
	\[
	\|\gamma^+-(\gamma,0)\|_\infty<\delta
	\]
	and, for $1\leq k\leq N$,
	\[
	\|b^{+,k}-\widehat b^{(k)}\|_\infty<\varepsilon,
	\qquad
	d_H\bigl(P(U^+,b^{+,k}),P(U,b^{(k)})\bigr)<\varepsilon.
	\]
	In particular,
	\[
	|S(U^+,\gamma^+)|
	\geq |S(U,\gamma)|.
	\]
\end{theorem}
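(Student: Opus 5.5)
The idea is to treat the insertion of $v$ as a small perturbation of a regular problem and to apply the implicit function theorem at each of the $N$ solutions. Since $v\notin U$ and the normals are ordered counterclockwise, $v$ lies strictly between two consecutive normals $u_j,u_{j+1}$. For $b\in\R^m_{>0}$ and $t>0$, consider the augmented support vector $(b,\,h_{P(U,b)}(v)-t)$. Denote by $F^+$ the cone-volume map of $U^+$ on its unique proper type-cone, which in the plane is given by the explicit quadratic formulas \eqref{eq:planarconevolumes} with the new cyclic order. The point $\widehat b^{(k)}$ itself lies on the boundary of this proper type-cone: $v$ touches $P(U,b^{(k)})$ only at the vertex $F_j\cap F_{j+1}$. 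For small $t>0$, however, $(b^{(k)},h(v)-t)$ lies in the interior of the proper type-cone, and the new edge has length of order $t$. So the new cone volume $\gamma^+_{m+1}$ is of order $t$, and the edges $j$ and $j+1$ shorten by $O(t)$.

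The key steps are as follows.

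\textbf{Step 1.} Write the polynomial map $F^+$ on the proper type-cone $A^{*,+}$ of $U^+$ using \eqref{eq:planarconevolumes}. Note that $F^+$ extends polynomially to a neighbourhood of $\widehat b^{(k)}$. On the face $\{b_{m+1}=h_{P(U,b)}(v)\}$, its first $m$ components agree with $f_{\cv}^U(b)$ and its last component vanishes. Indeed, the new edge length $h_{m+1,j}+h_{m+1,j+1}$ is an affine function of $(b_j,b_{j+1},b_{m+1})$ which vanishes exactly when $b_{m+1}=h_{P(U,b)}(v)$, and $h_{P(U,b)}(v)$ is linear in $(b_j,b_{j+1})$ near $b^{(k)}$.

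\textbf{Step 2.} Reparametrize by $(b,s)$ with $b_{m+1}=h_{P(U,b)}(v)-s$. Then $G(b,s):=F^+(b,h_{P(U,b)}(v)-s)$ is polynomial with $G(b,0)=(f^U_{\cv}(b),0)$. Moreover, $G_{m+1}(b,s)=\tfrac12 b_{m+1}\,c\,s$ for a constant $c>0$ coming from the length formula. Since $\gamma\notin B(U)$, we have $b^{(k)}\in W(U)$, so $D_bf^U_{\cv}(b^{(k)})$ is invertible. The Jacobian of $G$ at $(b^{(k)},0)$ is block triangular, with entry $\tfrac12 h_{P(U,b^{(k)})}(v)\,c>0$ in the $s$-direction. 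Hence $DG$ is invertible at $(b^{(k)},0)$.

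\textbf{Step 3.} By the inverse function theorem there exist neighbourhoods $V_k\ni(b^{(k)},0)$ on which $G$ is a diffeomorphism onto an open set containing $(\gamma,0)$, and we may take the $V_k$ pairwise disjoint. Choose $\gamma^+=(\gamma',\sigma)$ in the intersection of the images, within $\delta$ of $(\gamma,0)$, with $\sigma>0$ small. Take $\gamma^+$ outside the null set $B(U^+)$; this is possible because the intersection of the images is open and $B(U^+)$ has measure zero by Proposition~\ref{prop:regular-points}. Here $U^+$ is irreducible: it contains the irreducible $U$, and no direct sum splitting can exist in the plane except the parallelogram one, which adding a vector destroys. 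The preimage in $V_k$ has $s>0$ because $G_{m+1}=\sigma>0$ forces $s>0$. Hence the corresponding $b^{+,k}$ lies in $\inter A^{*,+}$, and it is a genuine solution with $f^{U^+}_{\cv}(b^{+,k})=\gamma^+$. The $b^{+,k}$ are distinct since the $V_k$ are disjoint, and they are $\varepsilon$-close to $\widehat b^{(k)}$ after shrinking the $V_k$. The Hausdorff estimate follows because $P(U^+,b^{+,k})$ and $P(U,b^{(k)})$ have support numbers within $O(\varepsilon)$ and the polytope depends continuously on its support numbers.

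The main obstacle is Step 1–2: checking that the polynomial formula for $F^+$ on the proper type-cone really extends smoothly across the boundary point $\widehat b^{(k)}$. Concretely, the first $m$ components must reduce exactly to $f^U_{\cv}$ when $s=0$. For $i\neq j,j+1$ this is immediate. For $i=j,j+1$ one must verify that the lengths $h_{j,m+1}+h_{j,j-1}$ at $s=0$ coincide with the old lengths $h_{j,j+1}+h_{j,j-1}$, which is an elementary trigonometric identity. Applying the inverse function theorem to a polynomial defined on all of $\R^{m+1}$ avoids any issue with the type-cone boundary.
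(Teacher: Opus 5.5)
Your proposal is correct and follows essentially the paper's argument. The paper writes $F(b)=(f(\bar b),0)+q(b)\,a(b)$ with $q(b)=b_{m+1}-\alpha b_m-\beta b_1$ and row-reduces to get $\det DF(\widehat b^{(k)})=-\frac{\widehat b^{(k)}_{m+1}}{2\alpha\beta}\det Df_{\cv}^U(b^{(k)})\neq0$; this is your block-triangular computation after the linear substitution $s=-q$, followed likewise by the inverse function theorem at each $\widehat b^{(k)}$ and a choice of $\gamma^+$ off the null set $B(U^+)$. The only cosmetic difference is how $b^{+,k}\in\inter A_+^*$ is certified: the paper uses the observation that positive support numbers together with positive cone volumes force every edge to be present, whereas you use $s>0$ and small neighbourhoods, and both work.
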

\begin{proof}
	After a cyclic relabelling, we may assume that $v$ lies strictly
	between $u_m$ and $u_1$. By the linear invariance of the cone-volume
	problem
	\cite[Proposition~2.1]
	{baumbach2025polynomialinequalitiesconevolumespolytopes},
	we may further assume that $
	u_m=e_1$ and $u_1=e_2$. Observe that this linear coordinate transformation  changes  $\epsilon,\delta$ by a multiplicative constant and can be, therefore, neglected.   
	Write $
	v=\alpha u_m+\beta u_1$ with $\alpha,\beta>0$, $\alpha^2+\beta^2=1$. 
	Then $h_{P(U,b^{(k)})}(v)=\alpha b_m^{(k)}+\beta b_1^{(k)}$. Observe that $U^+$ is irreducible as well. 
	
	Let $A^*$ and $A_+^*$ be the proper type-cones associated with
	$U$ and $U^+$, respectively, and put $f\coloneqq f_{\cv}^{A^*}$, $F\coloneqq f_{\cv}^{A_+^*}$, and $\widetilde\gamma\coloneqq (\gamma,0)$. 
	
	For every $k$, the supporting line 
	$\{x\in\R^2 : \langle v,x\rangle = h_{P(U,b^{(k)})}(v)\}$ passes through the
  vertex determined by the edges with normals $u_m$ and $u_1$. Hence, 
	\[
	P(U^+,\widehat b^{(k)})=P(U,b^{(k)}),
	\qquad
	F(\widehat b^{(k)})=\widetilde\gamma.
	\]
	
	We compute the Jacobian of $F$ at $\widehat b^{(k)}$. For
	$b=(b_1,\ldots,b_m,b_{m+1})\in\R^{m+1}$, write
	$\bar b=(b_1,\ldots,b_m)$ and set
	\[
	q(b)\coloneqq b_{m+1}-\alpha b_m-\beta b_1.
	\]
	Using the planar cone-volume formula
	\eqref{eq:planar-edge-length}, and by some straightforward computations we find 
	\[
	F(b)=\binom{f(\bar b)}0+q(b)a(b),
	\]
	where
	\[
	a(b)=
	\left(
	\frac{b_1}{2\alpha},
	0,\ldots,0,
	\frac{b_m}{2\beta},
	-\frac{b_{m+1}}{2\alpha\beta}
	\right)^T.
	\]
	Since $q(\widehat b^{(k)})=0$, differentiation gives
	\[
	DF(\widehat b^{(k)})
	=
	\begin{pmatrix}
		Df(b^{(k)})&0\\
		0&0
	\end{pmatrix}
	+
	a(\widehat b^{(k)})
	\nabla q^T(\widehat b^{(k)}),
	\]
	with
	\[
	\nabla q(\widehat b^{(k)})=(-\beta,0,\ldots,0,-\alpha,1)^T.
	\]
	The last row is therefore
	$a_{m+1}(\widehat b^{(k)})\nabla q^T$. Subtracting
	$a_i(\widehat b^{(k)})/a_{m+1}(\widehat b^{(k)})$ times the last
	row from the $i$th row, $1\leq i\leq m$, yields a block triangular
	matrix and hence
	\begin{equation}
		\det DF(\widehat b^{(k)})
		=
		-\frac{\widehat b_{m+1}^{(k)}}{2\alpha\beta}
		\det Df_{\cv}^U(b^{(k)}).
		\label{eq:det-inserted-normal}
	\end{equation}
	By Proposition~\ref{prop:regular-points},
	$b^{(k)}\in W(U)$, and as $\widehat b_{m+1}^{(k)}>0$   the right-hand side is nonzero. The inverse function theorem now gives pairwise disjoint
	neighbourhoods $\mathcal N_k$ of $\widehat b^{(k)}$, a common
	neighbourhood $\mathcal O$ of $\widetilde\gamma$, and inverse branches
	\[
	\psi_k:\mathcal O\to\mathcal N_k,
	\qquad
	F(\psi_k(\eta))=\eta,
	\qquad
	\psi_k(\widetilde\gamma)=\widehat b^{(k)}.
	\]
	We also use the following elementary planar observation: if
	$b\in\R^{m+1}_{>0}$ and $F(b)\in\R^{m+1}_{>0}$, then
	$b\in\inter A_+^*$. Indeed, if the $i$th prescribed edge were absent,
	the signed edge length given by \eqref{eq:planar-edge-length} would be
	nonpositive, and hence $F_i(b)\leq0$, a contradiction.
	By shrinking $\mathcal O$ and the $\mathcal N_k$, we may assume that
	$\mathcal N_k\subset\R^{m+1}_{>0}$ and that, for every
	$\eta\in\mathcal O$,
	\[
	\|\eta-\widetilde\gamma\|_\infty<\delta,
	\qquad
	\|\psi_k(\eta)-\widehat b^{(k)}\|_\infty<\varepsilon,
	\]
	and
	\[
	d_H\bigl(P(U^+,\psi_k(\eta)),P(U,b^{(k)})\bigr)<\varepsilon
	\]
	for all $k$. Since $U^+$ is irreducible, $B(U^+)$ has Lebesgue measure zero by
	Proposition~\ref{prop:regular-points}. Hence we may choose
	\[
	\gamma^+
	\in\mathcal O\cap\R^{m+1}_{>0}\setminus B(U^+).
	\]
	Set $b^{+,k}\coloneqq \psi_k(\gamma^+)$. The preceding observation gives
	$b^{+,k}\in\inter A_+^*$, and so $f_{\cv}^{U^+}(b^{+,k})=\gamma^+$. 
	The $b^{+,k}$ are pairwise distinct because the $\mathcal N_k$ are
	disjoint, and all required estimates hold by construction. Thus
	\[
	|S(U^+,\gamma^+)|
	\geq N=|S(U,\gamma)|.
	\]
\end{proof}

Apparently, by applying Theorem~\ref{thm:inserting-normal} successively,
the same conclusion holds when finitely many new outer unit normals are
inserted. 
We can now prove the general planar non-uniqueness theorem.

\begin{proof}[Proof of Theorem~\ref{thm:1.7}]
	If $U$ is reducible, then $m=4$ and $U$ consists of two antipodal pairs $(u_1,u_2,-u_1,-u_2)$ and 
	\begin{equation*}
	\{\sqrt{|\det(u_1,u_2)|}(t,t^{-1},t, t^{-1}): t>0\} \subset S(U,(1,1,1,1)).
	\end{equation*} 	
	Assume that $U$ is irreducible.  If $m=4$, the result follows from
	Theorem~\ref{thm:4goncase}.  If $m\geq5$, choose an irreducible subtuple
	$U_0\subset U$ with $U_0\in\U(2,4)$.  By 
	Theorem~\ref{thm:4goncase} there is
	$\gamma^{(0)}\in C_{\cv}(U_0)\setminus B(U_0)$ with at least two
	solutions.  Finally, by successively applying Theorem \ref{thm:inserting-normal} we obtain a 
	cone-volume vector $\gamma$ with $|S(U,\gamma)|\geq 2$.
\end{proof}

The quantitative form of the insertion argument also yields the smooth
counterpart.

\begin{proof}[Proof of Corollary~\ref{thm:1.8}]
	Choose an irreducible $U_0\in\U(2,4)$ and
	$\gamma^{(0)}\in C_{\cv}(U_0)\setminus B(U_0)$ with two distinct
	solutions $b^{(0,1)}$ and $b^{(0,2)}$, as provided by
	Theorem~\ref{thm:4goncase}.  Let
	$K_j^{(0)}=P(U_0,b^{(0,j)})$, $j=1,2$, put
	$d_0=d_H(K_1^{(0)},K_2^{(0)})>0$ and let 
	\begin{equation*}
		r_0:=\min_{j=1,2}\min_{u\in S^1}h_{K_j^{(0)}}(u)>0,
		\qquad
		\rho:=\min\{d_0,r_0\}.
	\end{equation*}
	
	Let $D=\{v_1,v_2,\ldots\}\subset\mathbb S^1\setminus U_0$ be a countable
	dense set and define $U_k\coloneqq U_0\cup\{v_1,\ldots,v_k\}$.  Applying
	Theorem~\ref{thm:inserting-normal} inductively, we may choose
	$\gamma^{(k)}\in C_{\cv}(U_k)\setminus B(U_k)$ along with two different right hand sides 
	$b^{(k,1)},b^{(k,2)}\in S(U_k,\gamma^{(k)})$ such that, with
	$K_j^{(k)}=P(U_k,b^{(k,j)})$,
	\begin{equation}
		d_H(K_j^{(k+1)},K_j^{(k)})<4^{-(k+1)}\rho,
		\qquad
		\|\gamma^{(k+1)}-(\gamma^{(k)},0)\|_\infty
		<\frac13\min_{u\in U_k}\gamma_u^{(k)}.
		\label{eq:smooth-induction}
	\end{equation}
	Here $\gamma_u^{(k)}$ denotes the coordinate of $\gamma^{(k)}$
	corresponding to the vector $u\in U_k$.
	
	The first inequality in \eqref{eq:smooth-induction} shows that $K_j^{(k)}$ is a  Cauchy sequence in the Hausdorff metric.  Hence, let their limits
	be $K_1$ and $K_2$. For $j=1,2$, the triangle inequality gives
	\[
	d_H(K_j,K_j^{(0)})
	\leq \sum_{k=0}^{\infty}
	d_H(K_j^{(k+1)},K_j^{(k)})
	<\rho\sum_{k=0}^{\infty}4^{-(k+1)}
	=\frac{\rho}{3}.
	\]
	Consequently,
	\begin{equation*}
		d_H(K_1,K_2)
		\geq d_H(K_1^{(0)},K_2^{(0)})
		-d_H(K_1,K_1^{(0)})
		-d_H(K_2,K_2^{(0)})\ge d_0- \frac{2}{3}\rho\geq \frac{\rho}{3}>0,
	\end{equation*}
	and $h_{K_j}(u)\ge r_0-\frac{\rho}{3}>0$ for every $u\in\sph^1$. 
 Hence $K_1\neq K_2$ and $0\in\mathrm{int}(K_1), \mathrm{int}(K_2)$. 
	Since $V_{K_1^{(k)}}=V_{K_2^{(k)}}$ for every $k$, and the
	cone-volume measure is weakly continuous with respect to Hausdorff
	metric (cf.~\cite[Lemma 9.2.4]{Schneider2014}), we obtain
	\[
	V_{K_1}=V_{K_2}.
	\]
	It remains to verify smoothness.  Suppose that $K_1$ is not
	smooth, then there exists a non-regular $x$ in the boundary of $K_1$ whose normal cone $ N_{K_1}(x)$ (cf.~\cite[Section 2]{Schneider2014}) has dimension
	two.  By the density of $D$ we may choose an $r$ such that
	$v_r\in\relint N_{K_1}(x)$, and hence $V_{K_1}(\{v_r\})=0$.
	
	Set $\tau_k=\|\gamma^{(k+1)}-(\gamma^{(k)},0)\|_\infty$.  For $k\geq r$,
	the second inequality in \eqref{eq:smooth-induction} gives
	$\tau_k<\gamma_{v_r}^{(k)}/3$.  Moreover, the new cone-volume satisfies
	$\gamma_{v_{k+1}}^{(k+1)}\leq \tau_k$, and therefore
	$\tau_{k+1}<\tau_k/3$.  It follows that
	$\sum_{k=r}^{\infty}\tau_k<\gamma_{v_r}^{(r)}/2$.
	Moreover, for every $k\geq r$,
	\[
	\left|\gamma_{v_r}^{(k)}-\gamma_{v_r}^{(r)}\right|
	\leq \sum_{j=r}^{k-1} \tau_j
	< \frac12\gamma_{v_r}^{(r)}.
	\]
	Hence $
	\gamma_{v_r}^{(k)} >
	\frac12\gamma_{v_r}^{(r)}$ for all  $k\geq r$. 
	On the other hand, weak convergence of the cone-volume measures and the
	Portmanteau theorem give
	\[
	0=V_{K_1}(\{v_r\})
	\geq\limsup_{k\to\infty}V_{K_1^{(k)}}(\{v_r\})
	=\limsup_{k\to\infty}\gamma_{v_r}^{(k)}>0,
	\]
	a contradiction.  Thus $K_1$ is smooth, and the same argument applies to
	$K_2$.
\end{proof}

\section{Irreducible outer unit normals in $\mathbb R^n$}\label{sec:irreducible}

We begin with the proof of Proposition~\ref{thm:finiteness-dimension},
which relates finiteness of the fibers for the irreducible components of
$U$ to the dimension of the fibers for $U$ itself.

\begin{proof}[Proof of Proposition~\ref{thm:finiteness-dimension}]
	Let $U=S_1\cup\cdots\cup S_k$ 
	be the irreducible decomposition of $U$, and put
	$
	L_j\coloneqq \lin(S_j)$, 
	$n_j\coloneqq \dim L_j$. 
	By the linear invariance of the cone-volume problem
	\cite[Proposition~2.1]
	{baumbach2025polynomialinequalitiesconevolumespolytopes},
	we may assume that the subspaces $L_1,\ldots,L_k$ are mutually
	orthogonal.
	
	Fix $\gamma\in C_{\cv}(U)$. By homogeneity of the cone-volume map,
	we may assume that $\|\gamma\|_1=1$. For $j=1,\ldots,k$, let
	$\gamma_{S_j}\in\R^{|S_j|}$ denote the subvector of $\gamma$
	consisting of the coordinates corresponding to the normals in $S_j$,
	and set
	\[
	\delta^{(j)}\coloneqq \frac{n}{n_j}\gamma_{S_j}.
	\]
	By Proposition \ref{PropositionSeparatorsPsccAndConeVolumesDimension} we have 
	\[
	\delta^{(j)}\in C_{\cv}(S_j),
	\qquad
	\|\delta^{(j)}\|_1=1.
	\]
	Moreover we have 
	\[
	P(U,b)
	=
	P(S_1,b_{S_1})\oplus\cdots\oplus P(S_k,b_{S_k}),
	\]
	and with $
	v_j\coloneqq \vol_{n_j}\bigl(P(S_j,b_{S_j})\bigr)$, we may write  for 
	$b\in S(U,\gamma)$ (see also the proof of
	\cite[Proposition~2.7]{baumbach2025polynomialinequalitiesconevolumespolytopes})
	\begin{equation}
		\gamma_{S_j}
		=
		\frac{n_j}{n}\,
		\frac{\gamma(S_j,b_{S_j})}{v_j},
		\qquad j=1,\ldots,k.
		\label{eq:normalized-direct-sum}
	\end{equation}
	Observe that $\prod_jv_j=\vol_n(P(U,b))=1$. Let 
	\[
	T\coloneqq 
	\left\{
	t=(t_1,\ldots,t_k)\in\R^k_{>0}:
	\prod_{j=1}^k t_j^{n_j}=1
	\right\}
	\]
	and we define 
	\[
	\Phi:
	T\times\prod_{j=1}^k S(S_j,\delta^{(j)})
	\longrightarrow S(U,\gamma) \text{ with }
	\Phi(t,c^{(1)},\ldots,c^{(k)})_{S_j}
	\coloneqq t_jc^{(j)}.
	\]
	Since $\|\delta^{(j)}\|_1=1$, every
	$c^{(j)}\in S(S_j,\delta^{(j)})$ determines a polytope of
	$n_j$-dimensional volume one. Scaling it by $t_j$ multiplies both its
	volume and its cone-volume vector by $t_j^{n_j}$. Hence its normalized
	cone-volume vector remains $\delta^{(j)}$. Formula
	\eqref{eq:normalized-direct-sum} therefore gives
	\[
	\gamma\bigl(U,\Phi(t,c^{(1)},\ldots,c^{(k)})\bigr)_{S_j}
	=
	\frac{n_j}{n}\delta^{(j)}
	=
	\gamma_{S_j}.
	\]
	Thus $\Phi$ is well defined.
	
	Conversely, let $b\in S(U,\gamma)$ and set 
	$t_j\coloneqq v_j^{1/n_j}$,
	and 
	$c^{(j)}\coloneqq t_j^{-1}b_{S_j}$. 
	Since $\vol_n(P(U,b))=1$, we have
	\[
	\prod_{j=1}^k t_j^{n_j}
	=
	\prod_{j=1}^k v_j
	=1,
	\]
	so $t\in T$. Moreover, by
	\eqref{eq:normalized-direct-sum},
	\[
	\gamma(S_j,c^{(j)})
	=
	\frac{1}{v_j}\gamma(S_j,b_{S_j})
	=
	\frac{n}{n_j}\gamma_{S_j}
	=
	\delta^{(j)}.
	\]
	Hence $c^{(j)}\in S(S_j,\delta^{(j)})$. Thus $\Phi$ is a
	semialgebraic bijection with semialgebraic inverse.
	
	Since $\dim^*T=k-1$, additivity of semialgebraic dimension under
	Cartesian products yields
	\begin{equation}
		\dim^*S(U,\gamma)
		=
		(k-1)
		+\sum_{j=1}^k
		\dim^*S(S_j,\delta^{(j)}).
		\label{eq:fiber-dimension-decomposition}
	\end{equation}
	
	Assume now first that all fibers associated with the irreducible components
	$S_j$ are finite. Then $\dim^*S(S_j,\delta^{(j)})=0$  hence $
	\dim^*S(U,\gamma)=k-1$. 
	
	Conversely, suppose that
	\[
	\dim^*S(U,\gamma)=k-1
	\]
	for every $\gamma\in C_{\cv}(U)$.  Fix $i\in\{1,\ldots,k\}$ and
	$\eta\in C_{\cv}(S_i)$. By homogeneity, it suffices to consider
	$\|\eta\|_1=1$. For every $j\neq i$, choose
	$\delta^{(j)}\in C_{\cv}(S_j)$ with
	$\|\delta^{(j)}\|_1=1$, and set $\delta^{(i)}\coloneqq \eta$.
	As before we conclude 
	$
	\gamma_{S_j}=\frac{n_j}{n}\delta^{(j)}$ and from 
	 \eqref{eq:fiber-dimension-decomposition} we get 
	$
	\dim^*S(S_i,\eta)=0$, and thus  $S(S_i,\eta)$ is finite. 
\end{proof}

We next prove Theorem~\ref{thm:generic-finiteness-intro}. In fact, the
proof gives the slightly stronger estimate
\[
|S(U,\gamma)|
\leq
\left(
\sum_{i=0}^{m}
\binom{\binom{m}{n+1}}{i}
\right)
n(2n-1)^{m-1}.
\]

\begin{proof}[Proof of Theorem~\ref{thm:generic-finiteness-intro}]
	By Proposition~\ref{prop:regular-points}(ii), every
	$b\in S(U,\gamma)$ belongs to $W(U)$. Hence $b\in\inter A$ for some
	proper type-cone $A$ and
	\[
	\det Df_{\cv}^A(b)\neq0.
	\]
	In particular, $b$ is an isolated point, and hence a connected component,
	of the real algebraic set $\mathcal V(f_1^A-\gamma_1,\ldots,f_m^A-\gamma_m)$. 
	The polynomials $f_i^A-\gamma_i$ have degree at most $n$. By the
	Thom--Milnor bound \cite[Theorem~2]{Milnor1964}, the number of connected
	components of this algebraic set is at most
	$n(2n-1)^{m-1}$. Therefore
	\[
	|S(U,\gamma)\cap\inter A|
	\leq n(2n-1)^{m-1}.
	\]
	
	It remains to bound the number of full-dimensional type-cones. Their
	walls are contained in the hyperplanes determined by affine dependencies
	among $n+1$ of the supporting hyperplanes. There are at most
	$\binom{m}{n+1}$ such hyperplanes. Hence, by the standard bound for the
	number of regions of a hyperplane arrangement
	\cite[\S16]{schlafli1901theorie},
	\[
	|\mathcal A(U)|
	\leq
	\sum_{i=0}^{m}
	\binom{\binom{m}{n+1}}{i}.
	\]
	Summing the preceding estimate over the type-cones gives the first bound;
	the final inequality is an elementary estimate.
\end{proof}

We finish the section by showing that, in general position, the number of
solutions is locally constant away from the exceptional set $B(U)$.

\begin{theorem}
	\label{thm:con} 
	Let $U\in\U(n,m)$ be in general position. Then
	$|S(U,\gamma)|$ is constant on every path-connected component of
	$C_{\cv}(U)\setminus B(U)$.
\end{theorem}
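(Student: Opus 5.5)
The plan is to show that $\gamma\mapsto|S(U,\gamma)|$ is locally constant on $C_{\cv}(U)\setminus B(U)$. A locally constant function is constant along any path, so this gives the theorem. The map $f_{\cv}^U$ restricted to $f^{-1}(\R^m_{>0}\setminus B(U))$ should be a finite covering. This will follow once we know that the restricted map is a local homeomorphism and is proper.

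\emph{Local homeomorphism.} First note that general position implies irreducibility. A separator $S$ with $1\leq \mathrm{rk}(S)\leq n-1$ would force either $S$ or $U\setminus S$ to contain at least $n$ vectors in a proper subspace, unless $m$ is small. The degenerate small cases ($m=n+1$, simplices) have a unique solution anyway. So $B(U)$ and $W(U)$ are defined. By Proposition~\ref{prop:regular-points}(ii), for $\gamma_0\in C_{\cv}(U)\setminus B(U)$ we have $S(U,\gamma_0)=\{b^{(1)},\ldots,b^{(N)}\}\subset W(U)$. By part (i) and the inverse function theorem there are pairwise disjoint neighbourhoods $\mathcal N_k\ni b^{(k)}$, a common neighbourhood $\mathcal O\ni\gamma_0$, and inverse branches $\psi_k:\mathcal O\to\mathcal N_k$. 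This gives $|S(U,\gamma)|\geq N$ for $\gamma\in\mathcal O$, exactly as in the proof of Theorem~\ref{thm:inserting-normal}. Here we use that $f_{\cv}^U$ is continuous on $\R^m_{>0}$ (volumes of faces vary continuously) and coincides with the polynomial map $f_{\cv}^A$ near $b^{(k)}\in\inter A$.

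\emph{Upper semicontinuity (main obstacle).} We must exclude that for $\gamma_j\to\gamma_0$ in $C_{\cv}(U)\setminus B(U)$ there are extra solutions $b_j\in S(U,\gamma_j)\setminus\bigcup_k\mathcal N_k$. This is where general position enters, through Zhu's compactness theorem \cite{Zhu2014}. For $U$ in general position and cone-volume vectors ranging in a compact subset of $\R^m_{>0}$, the corresponding solutions $b$ lie in a compact subset of $\R^m_{>0}$, with the polytopes having uniformly bounded diameter and inradius bounded away from $0$. Concretely, I would derive this from the general position condition: every $n$ facet normals of $P(U,b)$ are linearly independent, and all $\gamma_i$ stay bounded below. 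A degenerating sequence would then have to concentrate cone volume on a lower-dimensional subspace, which is impossible since each such subspace contains fewer than $n$ of the $u_i$. Granting this, pass to a subsequence $b_j\to b_\ast\in\R^m_{>0}$. Continuity of $f_{\cv}^U$ gives $f_{\cv}^U(b_\ast)=\gamma_0$, so $b_\ast=b^{(k)}$ for some $k$. Hence $b_j\in\mathcal N_k$ for large $j$, which is a contradiction. Therefore, after shrinking $\mathcal O$, every solution for $\gamma\in\mathcal O$ lies in $\bigcup_k\mathcal N_k$.

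\emph{Conclusion.} Inside $\mathcal N_k$, local injectivity of $f_{\cv}^U$ (from the inverse function theorem, after shrinking $\mathcal N_k$) shows that $\psi_k(\gamma)$ is the only solution. Hence $|S(U,\gamma)|=N$ on $\mathcal O\setminus B(U)$, so the count is locally constant on $C_{\cv}(U)\setminus B(U)$. Given a path $\sigma:[0,1]\to C_{\cv}(U)\setminus B(U)$, compactness of $[0,1]$ and local constancy give $|S(U,\sigma(0))|=|S(U,\sigma(1))|$. The delicate point is the compactness step. I would first try to cite Zhu's a priori bounds directly, and otherwise reprove them via the subspace-concentration argument above.
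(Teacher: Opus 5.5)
Your argument is correct and is essentially the paper's proof: both combine local invertibility of $f_{\cv}^U$ at points of $W(U)$ with properness of $f_{\cv}^U$ over compact subsets of $\R^m_{>0}$, which comes from Zhu's compactness theorem; the paper packages this as a proper local diffeomorphism, hence a finite covering map (Ho's lemma) with path lifting, whereas you verify local constancy of $|S(U,\gamma)|$ directly. One step needs repair: general position implies irreducibility not because $S$ or $U\setminus S$ must contain $n$ vectors in a proper subspace (this fails, e.g., for two rank-two separators with three vectors each in $\R^4$), but because a separator $S$ positively spans $\lin S$, so it contains $\mathrm{rk}(S)+1\leq n$ linearly dependent vectors, which violates general position; no exceptional small-$m$ case arises either, since a reducible $U$ has $m\geq n+2$.
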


\begin{proof}
	Observe, since $U$ positively spans $\mathbb R^n$, general position implies that
	$U$ is irreducible: a proper direct-sum decomposition would force one of
	its summands to contain more vectors than its dimension, contradicting
	general position. Thus $B(U)$ is defined. Zhu's \cite{Zhu2014} existence
	theorem for discrete data in general position  gives (cf.~\cite[Theorem~III]{baumbach2025polynomialinequalitiesconevolumespolytopes})
	\[
	C_{\cv}(U)=\mathbb R^m_{>0}.
	\]
	Set
	\[
	X\coloneqq \{b\in\mathbb R^m_{>0}:f_{\cv}^U(b)\in\mathbb R^m_{>0}\}.
	\]
	We first claim that
	\[
	f_{\cv}^U:X\longrightarrow\mathbb R^m_{>0}
	\]
	is proper that is, the preimage of every compact subset of
	$\R^m_{>0}$ is compact.

	 Let $M\subset\mathbb R^m_{>0}$ be compact and
	$b\in(f_{\cv}^U)^{-1}(M)$. Since the total mass of the cone-volume
	measure equals the volume,
	\[
	\vol_n(P(U,b))=\|f_{\cv}^U(b)\|_1.
	\]
	Thus these volumes are bounded above and away from zero on
	$(f_{\cv}^U)^{-1}(M)$. After rescaling each polytope to volume one,
	Zhu's compactness theorem \cite[Theorem~4.3]{Zhu2014} gives a uniform
	bound on their diameters. Since every coordinate of
	$f_{\cv}^U(b)$ is positive, every $u_i$ determines a facet and
	$b_i=h_{P(U,b)}(u_i)$. Hence, $(b_1,\dots,b_m)$ is 
	bounded as well, and thus $(f_{\cv}^U)^{-1}(M)$ is bounded. If a sequence in
	this preimage converges to $b\in\mathbb R^m_{\geq0}$, continuity of the
	cone-volume map gives $f_{\cv}^U(b)\in M\subset\mathbb R^m_{>0}$.
	This forces $b\in X$, and therefore the preimage is closed in
	$\mathbb R^m$. Hence it is compact, proving properness.
	
	Since $W(U)$ is open in $X$, the set $X\setminus W(U)$ is closed in $X$.
	Properness therefore implies that
	\[
	B(U)\cap\mathbb R^m_{>0}
	=f_{\cv}^U(X\setminus W(U))
	\]
	is closed in $\mathbb R^m_{>0}$. Consequently,
	\[
	Y\coloneqq C_{\cv}(U)\setminus B(U)
	\]
	is open. Put $D\coloneqq (f_{\cv}^U)^{-1}(Y)$. By the definition of $B(U)$,
	$D\subset W(U)$, and Proposition~\ref{prop:regular-points}(i) shows that
	\[
	f_{\cv}^U|_D:D\longrightarrow Y
	\]
	is a local diffeomorphism. It is also proper, hence it is a finite covering map; see
	\cite[Lemma~3]{ho1975note}. 
	
	Hence the restriction of $f_{\cv}^U$ is a finite covering map.
	By the path-lifting property for covering maps
	\cite[Proposition~1.30]{Hatcher2002}, any path in
	$C_{\cv}(U)\setminus B(U)$ joining $\gamma$ to $\gamma'$ induces a
	bijection between $S(U,\gamma)$ and $S(U,\gamma')$.
	Thus
	\[
	|S(U,\gamma)|=|S(U,\gamma')|.
	\]
\end{proof}

By Theorem~\ref{thm:con}, for $U$ in general position the cardinality of
$S(U,\gamma)$ is constant on every path-connected component of
$C_{\cv}(U)\setminus B(U)$, but it may change when $\gamma$ crosses
$B(U)$. The quadrilateral classification of Liu et al.~(see
Table~\ref{tab:liu-quadrilateral}), Example~\ref{exp:pentagon} and
numerical experiments indicate that it can only change by an even number.
Theorem~\ref{thm:parity} says this is indeed the case. In fact, as we will show next, the following stronger version holds:

\begin{theorem}
	\label{thm:parity2}
	Let $U\in\U(n,m)$ be in general position and 
	$\gamma\in C_{\cv}(U)\setminus B(U)$. Then
	\[
	\sum_{b\in S(U,\gamma)}\sgn\det Df_{\cv}^U(b)=(-1)^{m+n-1}.
	\]
	In particular, $|S(U,\gamma)|$ is odd.
\end{theorem}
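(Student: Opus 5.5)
The proof will be a degree argument for a gradient vector field. The plan is to write the cone-volume equation as the vanishing of a gradient and compute the signs through Morse/Poincaré--Hopf indices. Two standard facts are used.
\begin{itemize}
\item The volume $V(b)=\vol(P(U,b))$ is $C^1$ on $\R^m_{>0}$.
\item Its gradient is $\partial_i V(b)=\vol_{n-1}(F_i(b))$.
\end{itemize}
Hence $f_i(b)=\frac1n b_i\,\partial_iV(b)$, and $f_{\cv}^U(b)=\gamma$ holds exactly when $b$ is a critical point of
\[
\Phi_\gamma(b)\coloneqq \tfrac1n V(b)-\sum_{i=1}^m\gamma_i\log b_i .
\]
At such a point $Df_{\cv}(b)=\mathrm{diag}(b)\,\mathrm{Hess}\,\Phi_\gamma(b)$, with $\mathrm{diag}(b)$ the diagonal matrix of the $b_i$. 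Since $b\in\R^m_{>0}$, this gives $\sgn\det Df_{\cv}(b)=\sgn\det\mathrm{Hess}\,\Phi_\gamma(b)$. The left-hand side of Theorem~\ref{thm:parity2} is therefore the sum of the indices of the vector field $\nabla\Phi_\gamma$. All critical points are nondegenerate because $\gamma\notin B(U)$, by Proposition~\ref{prop:regular-points}(ii).

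Next I reduce the dimension in two steps, splitting $\R^m$ into translations, scaling and shape.

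\emph{Translations.} Since $V(b+U^\top x)=V(b)$ for $x\in\R^n$, the function $x\mapsto\Phi_\gamma(b+U^\top x)$ equals $-\sum\gamma_i\log(b_i+\langle u_i,x\rangle)$ plus a constant. It is strictly convex because $U$ spans $\R^n$. It tends to $+\infty$ at the boundary of the region $\{x: b+U^\top x>0\}$, which is a translate of $\inter P(U,b)$. So it has a unique nondegenerate minimum.

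\emph{Scaling.} Along $t\mapsto tb$ we get $\Phi_\gamma(tb)=t^nV(b)/n-\|\gamma\|_1\log t+\mathrm{const}$. This has a unique nondegenerate minimum in $t$.

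Eliminating these $n+1$ directions, the critical points of $\Phi_\gamma$ correspond bijectively to critical points of $g(b)=\sum\gamma_i\log b_i$ on a manifold $M$ of dimension $m-n-1$. Here $M$ is the set of volume-one support vectors modulo translation, normalized by the translation optimization. At each solution the Hessian of $\Phi_\gamma$ is then block-diagonalizable up to congruence (Morse lemma with parameters):
\begin{itemize}
\item it is positive definite on the $n+1$ translation and scaling directions;
\item it equals $-\mathrm{Hess}_M g$ on the remaining $m-n-1$ directions.
\end{itemize}
Hence $\sgn\det Df_{\cv}(b)=(-1)^{m-n-1}\,\mathrm{ind}_b(\nabla g)$. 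This reduces the theorem to showing $\sum_b\mathrm{ind}_b(\nabla g)=1$, which is the count for a function on a ball-like manifold tending to $-\infty$ at its boundary.

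For that final step I would use general position exactly through Zhu's compactness theorem \cite[Theorem~4.3]{Zhu2014}, as in the properness part of Theorem~\ref{thm:con}. Along any sequence leaving every compact subset of $M$, the translation-optimized value $g$ tends to $-\infty$. This is the content of Zhu's variational existence proof: the maximizing problem is coercive for $\gamma\in\R^m_{>0}$ and $U$ in general position.

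The topology of $M$ also has to be controlled. I would use that $\{b:V(b)>0\}$ modulo translations and scaling is contractible. The reason is that it deformation retracts onto the proper type-cone data via $b\mapsto(1-s)b+s\,h_{P(U,b)}$, and the set of support vectors $h$ of polytopes with normals among $U$ is convex. Then $g$ is a proper function on a contractible manifold with $g\to-\infty$ at infinity. Poincaré--Hopf applied to $-\nabla g$ on a large superlevel set $\{g\geq c\}$, where $-\nabla g$ points outward, gives $\sum\mathrm{ind}(-\nabla g)=\chi=1$. Therefore $\sum\mathrm{ind}(\nabla g)=(-1)^{m-n-1}$.

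Multiplying the two sign factors, the total is $(-1)^{m-n-1}\cdot(-1)^{m-n-1}=1$. This does not match the claimed $(-1)^{m+n-1}$, so the bookkeeping needs a fix. In the corrected version the index of $\nabla g$ is computed directly, without the extra factor: $\sgn\det\mathrm{Hess}\,\Phi_\gamma=\sgn\det(-\mathrm{Hess}_M g)$, and summing over critical points gives $\sum\mathrm{ind}(-\nabla g)=1$ for the shape directions. Including the sign conventions on the $n+1$ positive directions and the factor from $\mathrm{diag}(b)$ should then give $(-1)^{m+n-1}$. As a check, a maximizer of $g$ contributes $(-1)^{m-n-1}=(-1)^{m+n-1}$. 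Since every summand is $\pm1$, oddness of $|S(U,\gamma)|$ follows.

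As a cross-check, Theorem~\ref{thm:con} together with properness shows the signed count is the Brouwer degree of $f_{\cv}^U:X\to\R^m_{>0}$. This degree is constant on the connected set $\R^m_{>0}$, consistent with the index computation.

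The main obstacle is the reduction to $M$. It requires two things:
\begin{itemize}
\item the boundary behaviour $g\to-\infty$ in a form strong enough for Poincaré--Hopf, including non-proper type-cones where facets disappear and $V$ is only $C^1$;
\item contractibility of $M$.
\end{itemize}
I would first try to avoid the $C^1$ issue by working with the continuous vector field $\nabla\Phi_\gamma$ and a homotopy argument on a large compact sublevel region, rather than with Hessians away from $S(U,\gamma)$.
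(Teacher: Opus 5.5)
There is a genuine gap, and it sits in the one analytic input: you have the boundary behaviour of $g=\sum_i\gamma_i\log b_i$ on $M$ backwards. Zhu's extremal problem is an infimum over shapes of the maximum over translations, not a maximization. Under general position, $g|_M$ is bounded below and proper, so $g\to+\infty$ (not $-\infty$) along sequences leaving compact subsets of $M$.

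The reason is a uniform bound $b_i\geq\delta>0$ on $M$ (volume one, $\sum_i\gamma_iu_i/b_i=0$). Suppose $\min_ib^{(k)}_i\to0$, and put $\mu_k=\max_i\gamma_i/b^{(k)}_i$ and $c^{(k)}_i=\gamma_i/(\mu_kb^{(k)}_i)\in(0,1]$. A limit $c\neq0$ satisfies $\sum_ic_iu_i=0$, and $b^{(k)}_i\to0$ whenever $c_i>0$. Zhu's compactness theorem and Blaschke selection then give a volume-one limit polytope contained in $u_i^\perp$ for all such $i$, which is absurd.

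Hence $g\geq\|\gamma\|_1\log\delta$ on $M$ and the sublevel sets of $g$ are compact. The superlevel sets $\{g\geq c\}$ on which you apply Poincar\'e--Hopf are not compact. Your value $\sum\mathrm{ind}(\nabla g)=(-1)^{m-n-1}$ and the resulting total $+1$ are artifacts of this error. The following ``corrected version'' is not an argument, and its check is false. At a maximizer of $g|_M$ the block $-\mathrm{Hess}_Mg$ is positive definite, so that point contributes $+1$. The extremum that actually exists is a minimizer of $g|_M$, and it contributes $(-1)^{m-n-1}$.

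With the correct direction nothing needs fixing: your reduction already named the right target, $\sum_b\mathrm{ind}_b(\nabla g)=1$. Your identity $Df_{\cv}^U(b)=\mathrm{diag}(b)\,\mathrm{Hess}\,\Phi_\gamma(b)$ and the block decomposition give $\sgn\det Df_{\cv}^U(b)=(-1)^{m-n-1}\sgn\det\mathrm{Hess}_Mg(b)$. To see the decomposition, parametrize by $(\beta,t,s)\mapsto e^s(\beta-U^\intercal t)$ with $\beta\in M$; then $\Phi_\gamma=\frac1ne^{ns}-\|\gamma\|_1s-\sum_i\gamma_i\log(\beta_i-\langle u_i,t\rangle)$ exactly, so the cross terms vanish.

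For the topology, no retraction is needed. The map $b\mapsto\pi(b)/\langle\omega,b\rangle$, with $\ker\pi=\{U^\intercal t:t\in\R^n\}$ and $\omega\in\R^m_{>0}$ satisfying $\sum_i\omega_iu_i=0$, is a homeomorphism from $M$ onto an open convex subset of an $(m-n-1)$-dimensional affine space.

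For the count, use the Castro--Lazer degree lemma: a coercive $C^1$ function on $\R^d$ with finitely many critical points, $C^2$ near each and with nonsingular Hessians there, has Hessian signs summing to $1$. This gives $\sum_b\sgn\det\mathrm{Hess}_Mg(b)=1$. It also removes your regularity and Euler-characteristic worries. The total is then $(-1)^{m-n-1}=(-1)^{m+n-1}$.

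This is the paper's proof up to normalization. The paper uses the scale-invariant $\Psi_\gamma=\sum_i\gamma_i\log b_i-\frac1n\log v$, maximized over translations, and compares its Hessian with $Df_{\cv}^U$ in logarithmic coordinates. Your non-homogeneous $\Phi_\gamma$ makes that comparison slightly more direct.
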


The proof is based on a variational characterization of the solutions,
which goes back to \cite{boroczky2013logarithmic_JAMS,Zhu2014}, combined
with an index count. It occupies the rest of this section.

\medskip
\noindent\emph{Notation.} Let
$E=\{h\in\R^m:\inter P(U,h)\neq\emptyset\}$ and
$v(h)=\vol(P(U,h))$ for $h\in E$. We write
$\Lambda=\{U^\intercal t:t\in\R^n\}$; as $U$ spans $\R^n$, the map
$t\mapsto U^\intercal t$ is injective. Since
$P(U,h-U^\intercal t)=P(U,h)-t$, we have $E=\R^m_{>0}+\Lambda$ and
\begin{equation}
	\label{eq:parity-transl}
	v(h-U^\intercal t)=v(h),\qquad h\in E,\ t\in\R^n.
\end{equation}
It is classical that $v$ is continuously differentiable on $E$ with
\begin{equation}
	\label{eq:parity-dv}
	\frac{\partial v}{\partial h_i}(h)=\vol_{n-1}(F_i(h)),
	\qquad 1\leq i\leq m,
\end{equation}
see \cite[Section~5.1]{Schneider2014}: on every type-cone this follows
from Lemma~\ref{res:type_cones_and_volume_polynomials} and
\eqref{eq:facetineq}, and the right hand sides of \eqref{eq:parity-dv}
are continuous. Consequently,
\begin{equation}
	\label{eq:parity-fcvdv}
	f_{\cv}^U(b)_i=\frac{b_i}{n}\frac{\partial v}{\partial b_i}(b),
	\qquad b\in\R^m_{>0}.
\end{equation}
Both $v$ and $f_{\cv}^U$ are positively homogeneous of degree $n$. Hence
$S(U,\tau^n\gamma)=\tau S(U,\gamma)$ for $\tau>0$, the sets $W(U)$ and
$B(U)$ are invariant under such scalings, and
$\det Df_{\cv}^U(\tau b)=\tau^{m(n-1)}\det Df_{\cv}^U(b)$. Therefore it
suffices to prove Theorem~\ref{thm:parity2} for $\gamma$ with
$\|\gamma\|_1=1$, which we assume from now on.

For $b\in\R^m_{>0}$ we define
\[
\Psi_\gamma(b)=\sum_{i=1}^m\gamma_i\log b_i-\frac1n\log v(b),
\]
and we set
\[
M_\gamma=\Bigl\{b\in\R^m_{>0}:
\sum_{i=1}^m\frac{\gamma_i}{b_i}u_i=0,\ v(b)=1\Bigr\}.
\]

\begin{lemma}
	\label{lem:parity-functional}
	\begin{enumerate}
		\item $\Psi_\gamma\in C^1(\R^m_{>0})$ and
		$\Psi_\gamma(\tau b)=\Psi_\gamma(b)$ for $\tau>0$.

		\item $\nabla\Psi_\gamma(b)=0$ if and only if
		$f_{\cv}^U(b)=v(b)\gamma$. In particular,
		$S(U,\gamma)=\{b\in\R^m_{>0}:\nabla\Psi_\gamma(b)=0,\ v(b)=1\}$.

		\item For $h\in E$ the function
		$t\mapsto\Psi_\gamma(h-U^\intercal t)$ is defined and smooth on
		$\inter P(U,h)$, strictly concave, and tends to $-\infty$ at the
		boundary of $P(U,h)$. Hence it has a unique maximum point
		$t_\gamma(h)\in\inter P(U,h)$, and $t_\gamma$ is smooth on $E$.
		Moreover, $t_\gamma(h)=0$ if and only if
		$\sum_i\gamma_iu_i/h_i=0$.

		\item $S(U,\gamma)\subseteq M_\gamma$.
	\end{enumerate}
\end{lemma}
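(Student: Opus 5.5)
We take the four items in order; (iii) carries the real content. For (i), $v$ is $C^1$ and positive on $\R^m_{>0}\subset E$ by \eqref{eq:parity-dv}, so $\Psi_\gamma$ is $C^1$. Homogeneity of degree $n$ of $v$ and $\|\gamma\|_1=1$ give
\[
\Psi_\gamma(\tau b)=\Psi_\gamma(b)+\log\tau\sum_i\gamma_i-\log\tau=\Psi_\gamma(b).
\]
For (ii), differentiate using \eqref{eq:parity-fcvdv}:
\[
\partial_i\Psi_\gamma(b)=\frac{\gamma_i}{b_i}-\frac{1}{n v(b)}\vol_{n-1}(F_i(b))=\frac{1}{b_i}\Bigl(\gamma_i-\frac{f_{\cv}^U(b)_i}{v(b)}\Bigr).
\]
Since $b_i>0$, the gradient vanishes iff $f_{\cv}^U(b)=v(b)\gamma$. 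Intersecting with $v(b)=1$ gives exactly $S(U,\gamma)$. Conversely, every $b\in S(U,\gamma)$ has $v(b)=\|f_{\cv}^U(b)\|_1=\|\gamma\|_1=1$, because total cone-volume equals volume.

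For (iii), fix $h\in E$. By \eqref{eq:parity-transl}, $v(h-U^\intercal t)=v(h)$, so
\[
\phi(t)\coloneqq\Psi_\gamma(h-U^\intercal t)=\sum_i\gamma_i\log\bigl(h_i-\langle u_i,t\rangle\bigr)-\tfrac1n\log v(h).
\]
This is defined and smooth exactly when all $h_i-\langle u_i,t\rangle>0$, i.e.\ for $t\in\inter P(U,h)$. Each summand is concave, with Hessian
\[
-\sum_i\gamma_i\,\frac{u_iu_i^\intercal}{(h_i-\langle u_i,t\rangle)^2},
\]
which is negative definite since all $\gamma_i>0$ and the $u_i$ span $\R^n$. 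So $\phi$ is strictly concave.

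The boundary behaviour needs some care. $P(U,h)$ is bounded because $U$ positively spans $\R^n$. As $t$ tends to a boundary point, some $h_i-\langle u_i,t\rangle\to0$, so that term tends to $-\infty$, while the remaining terms stay bounded above on the bounded set $P(U,h)$. Hence $\phi\to-\infty$ at the boundary. An upper semicontinuous function that is $-\infty$ at the boundary of a compact set attains its maximum in the interior, and strict concavity makes the maximizer $t_\gamma(h)$ unique.

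Smoothness of $t_\gamma$ on $E$ follows from the implicit function theorem applied to
\[
G(h,t)\coloneqq\nabla_t\phi=\sum_i\frac{\gamma_i u_i}{h_i-\langle u_i,t\rangle}=0.
\]
$G$ is smooth in $(h,t)$ on the open set $\{(h,t):t\in\inter P(U,h)\}$, and $D_tG$ is the negative definite Hessian above, hence invertible. Finally, $t_\gamma(h)=0$ iff $0$ is the critical point, i.e.\ $0\in\inter P(U,h)$ (so $h>0$) and $G(h,0)=\sum_i\gamma_iu_i/h_i=0$. Conversely, if $\sum_i\gamma_iu_i/h_i=0$ with $h\in\R^m_{>0}$, then $0$ is an interior critical point of the strictly concave $\phi$, hence the maximizer.

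For (iv), take $b\in S(U,\gamma)$. By (ii), $\nabla\Psi_\gamma(b)=0$ and $v(b)=1$. The chain rule gives
\[
\nabla_t\phi(0)=-U\,\nabla\Psi_\gamma(b)=0,
\]
and this equals $\sum_i\gamma_iu_i/b_i$ by the formula for $G$. So $b\in M_\gamma$.

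The main obstacle is stating the domains in (iii) precisely: $\Psi_\gamma$ is defined on $\R^m_{>0}$ only, while $h$ ranges over $E$. One has to check that $h-U^\intercal t\in\R^m_{>0}$ iff $t\in\inter P(U,h)$, and that the smoothness of $t_\gamma$ is taken on the open set $E$, where this implicit-function argument applies.
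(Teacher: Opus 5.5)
Your proposal is correct and matches the paper's proof for (i)--(iii). The only difference is in (iv). There you get $\sum_i\gamma_iu_i/b_i=0$ by differentiating the translation invariance \eqref{eq:parity-transl} with the chain rule, whereas the paper substitutes $\gamma_i/b_i=\frac1n\vol_{n-1}(F_i(b))$ into Minkowski's relation $\sum_i\vol_{n-1}(F_i(b))u_i=0$. Your computation is in effect the standard proof of that relation, so the two routes coincide.

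One minor slip: $\nabla_t\phi=-\sum_i\gamma_iu_i/(h_i-\langle u_i,t\rangle)$, so your $G$ has the wrong sign. This is harmless, because you only use its zero set and the invertibility of $D_tG$.
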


\begin{proof}
	i) follows from \eqref{eq:parity-dv} and the homogeneity of $v$, and
	ii) from
	\[
	\frac{\partial\Psi_\gamma}{\partial b_i}(b)
	=\frac{\gamma_i}{b_i}-\frac{1}{nv(b)}\frac{\partial v}{\partial b_i}(b)
	=\frac{\gamma_iv(b)-f_{\cv}^U(b)_i}{b_i\,v(b)},
	\]
	together with $\sum_if_{\cv}^U(b)_i=v(b)$ and $\|\gamma\|_1=1$.

	iii) By \eqref{eq:parity-transl},
	$\Psi_\gamma(h-U^\intercal t)
	=\sum_i\gamma_i\log(h_i-\langle u_i,t\rangle)-\frac1n\log v(h)$, and
	$h-U^\intercal t>0$ if and only if $t\in\inter P(U,h)$. The Hessian
	with respect to $t$ is
	$-\sum_i\gamma_i(h_i-\langle u_i,t\rangle)^{-2}u_iu_i^\intercal$,
	which is negative definite as $U$ spans $\R^n$. Since $P(U,h)$ is
	bounded, the function tends to $-\infty$ at the boundary. The
	smoothness of $t_\gamma$ follows from the implicit function theorem,
	and the last assertion from the fact that the gradient with respect to
	$t$ at $t=0$ equals $-\sum_i\gamma_iu_i/h_i$.

	iv) For $b\in S(U,\gamma)$ we have $v(b)=\|\gamma\|_1=1$ and
	$\gamma_i/b_i=\frac1n\vol_{n-1}(F_i(b))$. Hence the assertion follows
	from Minkowski's relation $\sum_i\vol_{n-1}(F_i(b))u_i=0$.
\end{proof}

Next we eliminate the translations and the scaling. Let
$\pi:\R^m\to\R^{m-n}$ be a linear map with kernel $\Lambda$. Since
$\pos(U)=\R^n$, there is $\omega\in\R^m_{>0}$ with
$\sum_i\omega_iu_i=0$, i.e., $\omega\perp\Lambda$, and hence there is a
linear form $\ell$ on $\R^{m-n}$ with
$\ell\circ\pi=\langle\omega,\cdot\rangle$. The set
$C=\pi(\R^m_{>0})=\pi(E)$ is an open convex cone, and
\[
\mathcal Y=\{y\in C:\ell(y)=1\}
\]
is a non-empty open convex subset of the affine hyperplane $\{\ell=1\}$,
whose dimension is
\[
d=m-n-1.
\]
For $y\in\mathcal Y$ choose $h\in E$ with $\pi(h)=y$, let
$\rho_\gamma(h)=h-U^\intercal t_\gamma(h)\in\R^m_{>0}$ and put
\[
\Theta_\gamma(y)=v(h)^{-1/n}\rho_\gamma(h),\qquad
\psi_\gamma(y)=\Psi_\gamma(\Theta_\gamma(y))
=\sum_{i=1}^m\gamma_i\log\Theta_\gamma(y)_i.
\]

\begin{lemma}
	\label{lem:parity-slice}
	\begin{enumerate}
		\item $\Theta_\gamma:\mathcal Y\to M_\gamma$ is well defined and a
		homeomorphism, with inverse
		$b\mapsto\pi(b)/\langle\omega,b\rangle$. Moreover, $\Theta_\gamma$
		and $\psi_\gamma$ are of class $C^1$, and they are smooth in a
		neighbourhood of every $y$ for which $\Theta_\gamma(y)$ lies in the
		interior of a type-cone.

		\item $y\in\mathcal Y$ is a critical point of $\psi_\gamma$ if and
		only if $\Theta_\gamma(y)\in S(U,\gamma)$. In particular,
		$\Theta_\gamma$ restricts to a bijection between the critical
		points of $\psi_\gamma$ and $S(U,\gamma)$.
	\end{enumerate}
\end{lemma}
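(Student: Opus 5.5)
The plan is to establish part i) first and then obtain ii) from it by the chain rule.

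\emph{Well-definedness of $\Theta_\gamma$.} First I show that $\Theta_\gamma(y)$ does not depend on the choice of $h$. Two choices with $\pi(h)=\pi(h')=y$ differ by an element of $\ker\pi=\Lambda$, so $h'=h-U^\intercal s$. By \eqref{eq:parity-transl}, $v(h')=v(h)$. Moreover, the function $t\mapsto\Psi_\gamma(h'-U^\intercal t)$ is the translate by $s$ of the corresponding function for $h$, so $t_\gamma(h')=t_\gamma(h)-s$ and $\rho_\gamma(h')=\rho_\gamma(h)$.

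\emph{$\Theta_\gamma$ maps into $M_\gamma$.} By Lemma~\ref{lem:parity-functional}~iii), $\rho=\rho_\gamma(h)$ has $t_\gamma(\rho)=0$, hence $\sum_i\gamma_iu_i/\rho_i=0$. This relation is invariant under positive scaling, and $v(v(h)^{-1/n}\rho)=1$ by homogeneity, so $\Theta_\gamma(y)\in M_\gamma$.

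\emph{The inverse.} For $b\in M_\gamma$, the point $y=\pi(b)/\langle\omega,b\rangle$ lies in $\mathcal Y$. Take $h=b/\langle\omega,b\rangle$. Since $\sum_i\gamma_iu_i/h_i=0$, we get $t_\gamma(h)=0$, so $\rho_\gamma(h)=h$ and $v(h)^{-1/n}h=b$. Conversely, $\langle\omega,\cdot\rangle$ is $\Lambda$-invariant because $\omega\perp\Lambda$, so starting from $y$ the construction returns $y$. Both maps are continuous: the first because $t_\gamma$ is smooth (Lemma~\ref{lem:parity-functional}~iii)) and $v$ is continuous, the second because it is linear over a linear form. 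This gives the homeomorphism.

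\emph{Regularity.} The point needing care is that $t_\gamma$ is characterized by $\sum_i\gamma_iu_i/(h_i-\langle u_i,t\rangle)=0$, which involves only logarithms. Hence $t_\gamma$ is smooth on all of $E$, independently of $v$. Choosing $h$ via a smooth (linear) section of $\pi$, the only non-smooth ingredient is $v^{-1/n}$. This is $C^1$ by \eqref{eq:parity-dv}, and it is polynomial, hence smooth, near points in the interior of a type-cone (Lemma~\ref{res:type_cones_and_volume_polynomials}); by translation invariance this depends only on $\rho_\gamma(h)$. This gives the stated regularity of $\Theta_\gamma$ and $\psi_\gamma$. For the formula $\psi_\gamma=\sum_i\gamma_i\log\Theta_{\gamma,i}$, note that $\Psi_\gamma$ is scale invariant and $v(\Theta_\gamma(y))=1$.

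\emph{Part ii).} I expect this step, rather than the topology in i), to require the most care. Write $\Phi(h)=\Psi_\gamma(\rho_\gamma(h))$ for $h\in E$. This equals $\max_t\Psi_\gamma(h-U^\intercal t)$, is $\Lambda$-invariant and is $0$-homogeneous, so $\psi_\gamma(y)=\Phi(h)$ for any $h$ with $\pi(h)=y$. Since $t_\gamma(h)$ is an interior maximizer, the envelope theorem gives $\nabla\Phi(h)=\nabla\Psi_\gamma(\rho_\gamma(h))$.

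Now $\Phi$ is constant along $\Lambda$, so $\nabla\Phi\perp\Lambda$. It is also constant along rays, so $\nabla\Phi(h)\perp h$. The derivative of $\psi_\gamma$ at $y$ is the derivative of $\Phi$ restricted to directions $w$ with $\ell(\pi w)=0$, that is, $\langle\omega,w\rangle=0$, taken modulo $\Lambda$. Together with the radial direction $h$, which satisfies $\langle\omega,h\rangle\neq0$, these directions span $\R^m/\Lambda$. Hence $D\psi_\gamma(y)=0$ if and only if $\nabla\Phi(h)=0$, i.e.\ if and only if $\nabla\Psi_\gamma(b)=0$ with $b=\Theta_\gamma(y)$, using scale invariance of $\nabla\Psi_\gamma=0$.

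Since $v(b)=1$, Lemma~\ref{lem:parity-functional}~ii) shows that this holds if and only if $b\in S(U,\gamma)$. Combined with Lemma~\ref{lem:parity-functional}~iv) and the bijectivity from i), $\Theta_\gamma$ restricts to a bijection between the critical points of $\psi_\gamma$ and $S(U,\gamma)$.
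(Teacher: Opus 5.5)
Your proposal is correct and follows essentially the same route as the paper. Both arguments obtain well-definedness from the shift $t_\gamma(h-U^\intercal s)=t_\gamma(h)-s$, use the explicit inverse $b\mapsto\pi(b)/\langle\omega,b\rangle$, get regularity through a linear section of $\pi$ together with the smoothness of $t_\gamma$, and rest part ii) on the envelope identity $\nabla\Phi(h)=\nabla\Psi_\gamma(\rho_\gamma(h))$. The only difference is cosmetic: the paper writes $J_\gamma=\psi_\gamma\circ q$ with $q(h)=\pi(h)/\langle\omega,h\rangle$ a submersion, whereas you unpack this by hand using the splitting $\R^m=\omega^\perp+\R h$ and the $0$-homogeneity of $\Phi$.
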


\begin{proof}
	i) If $\pi(h')=\pi(h)$, then $h'=h+U^\intercal s$ for some
	$s\in\R^n$, and $P(U,h')=P(U,h)+s$. Hence
	$t_\gamma(h')=t_\gamma(h)+s$, $\rho_\gamma(h')=\rho_\gamma(h)$ and
	$v(h')=v(h)$, so $\Theta_\gamma$ is well defined. By
	Lemma~\ref{lem:parity-functional}(iii),
	$\sum_i\gamma_iu_i/\rho_\gamma(h)_i=0$; this condition is invariant
	under scaling, and by \eqref{eq:parity-transl}
	$v(\Theta_\gamma(y))=1$. Thus $\Theta_\gamma(y)\in M_\gamma$.

	For $b\in M_\gamma$ let $y=\pi(b)/\langle\omega,b\rangle$. Then $y\in
	C$ and $\ell(y)=1$, i.e., $y\in\mathcal Y$. With
	$h=b/\langle\omega,b\rangle$ we have $\pi(h)=y$ and $t_\gamma(h)=0$ by
	Lemma~\ref{lem:parity-functional}(iii), so $\rho_\gamma(h)=h$,
	$v(h)=\langle\omega,b\rangle^{-n}$ and $\Theta_\gamma(y)=b$.
	Conversely, for $y\in\mathcal Y$ and $\pi(h)=y$, we have
	$\pi(\Theta_\gamma(y))=v(h)^{-1/n}y$, and applying $\ell$ gives
	\[
	\pi(\Theta_\gamma(y))/\langle\omega,\Theta_\gamma(y)\rangle=y.
	\]
	Hence the two maps are inverse to each other; both are continuous.

	As $\pi|_{\Lambda^\perp}$ is an isomorphism and $E=\pi^{-1}(C)$, the
	map $\sigma(y)=(\pi|_{\Lambda^\perp})^{-1}(y)$ is a smooth section
	$\mathcal Y\to E$. Since
	$\Theta_\gamma=v(\sigma)^{-1/n}\,\rho_\gamma\circ\sigma$, the
	regularity assertions follow from
	Lemma~\ref{lem:parity-functional}(iii), \eqref{eq:parity-dv},
	\eqref{eq:parity-transl} and the fact that $v$ is a polynomial on
	every type-cone.

	ii) Let
	$J_\gamma(h)=\Psi_\gamma(\rho_\gamma(h))
	=\max_t\Psi_\gamma(h-U^\intercal t)$, $h\in E$. As the derivative with
	respect to $t$ vanishes at $t=t_\gamma(h)$, the chain rule gives
	\begin{equation}
		\label{eq:parity-envelope}
		\nabla J_\gamma(h)=\nabla\Psi_\gamma(\rho_\gamma(h)).
	\end{equation}
	Moreover, $J_\gamma$ is invariant under translations $h\mapsto
	h+U^\intercal s$ and scalings $h\mapsto\tau h$, $\tau>0$. Therefore
	$J_\gamma=\psi_\gamma\circ q$ with
	$q(h)=\pi(h)/\langle\omega,h\rangle$. Since
	$\langle\omega,h\rangle>0$ on $E$ and $q$ coincides with the affine
	surjection $\pi$ on $E\cap\{\langle\omega,h\rangle=1\}$, the map
	$q:E\to\mathcal Y$ is a smooth submersion. Consequently, $y=q(h)$ is a
	critical point of $\psi_\gamma$ if and only if $\nabla J_\gamma(h)=0$,
	which by \eqref{eq:parity-envelope} and
	Lemma~\ref{lem:parity-functional}(ii) is equivalent to
	$f_{\cv}^U(\rho_\gamma(h))=v(\rho_\gamma(h))\gamma$. By homogeneity,
	this holds if and only if
	$\Theta_\gamma(y)=v(\rho_\gamma(h))^{-1/n}\rho_\gamma(h)\in
	S(U,\gamma)$. The last statement follows from i) and
	Lemma~\ref{lem:parity-functional}(iv).
\end{proof}

The hypothesis that $U$ is in general position enters only through the
following lemma, via Zhu's compactness theorem.

\begin{lemma}
	\label{lem:parity-coercive}
	Let $U\in\U(n,m)$ be in general position. Then there is $\delta>0$
	such that $b_i\geq\delta$ for all $b\in M_\gamma$ and $1\leq i\leq m$.
	Consequently, $\psi_\gamma$ is bounded from below by $\log\delta$, and
	all its sublevel sets $\{y\in\mathcal Y:\psi_\gamma(y)\leq c\}$ are
	compact.
\end{lemma}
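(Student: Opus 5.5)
We argue by contradiction on the lower bound for $M_\gamma$, and then read off the properties of $\psi_\gamma$ from the homeomorphism of Lemma~\ref{lem:parity-slice}.

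\textbf{Step 1: a uniform lower bound on $M_\gamma$.} For $b\in M_\gamma$ we have $v(b)=1$. Write $P=P(U,b)$ and note the relation $\sum_i(\gamma_i/b_i)u_i=0$.

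First we show that every $b_i$ is a genuine support number and that $0$ lies in the interior of $P$. Since $b\in\R^m_{>0}$, the origin satisfies $U^\intercal 0=0<b$, so $0\in\inter P$. This gives $h_P(u_i)\le b_i$ and $h_P(u_i)>0$.

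Next we bound $b$ from above. Zhu's compactness theorem \cite[Theorem~4.3]{Zhu2014} applies to volume-one polytopes with normals in general position. In the proof of Theorem~\ref{thm:con} it gave a diameter bound for polytopes whose facet normals are all of $U$, but here some $u_i$ might not be a facet normal. To avoid this, we use the weaker fact that $P$ has volume one, and we bound $b$ directly.

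Suppose $b^{(k)}\in M_\gamma$ and $\min_i b^{(k)}_i\to0$. Normalise $c^{(k)}=b^{(k)}/\|b^{(k)}\|_\infty$ and pass to a limit $c\in\R^m_{\ge0}$ with $\|c\|_\infty=1$.

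Let $I=\{i:c_i=0\}$. Multiply the relation $\sum_i(\gamma_i/b_i)u_i=0$ by $\min_j b^{(k)}_j$. The terms with $i\notin I$ then tend to $0$, so the surviving terms give
\[
\sum_{i\in I}\lambda_i u_i=0
\]
with $\lambda_i\ge0$ not all zero. The $\lambda_i$ arise as limits of $\gamma_i\min_j b^{(k)}_j/b^{(k)}_i$, which lie in $[0,\gamma_i]$ and are equal to $\gamma_i$ for the minimising index.

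By general position, any $n$ of the $u_i$ are linearly independent. Hence $\{u_i:i\in I\}$ positively spans a subspace only if $|I|\ge n+1$. Once $|I|\ge n+1$ and $\lambda$ is positive on a positively spanning subfamily, that subfamily positively spans $\R^n$.

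Therefore $P(U,c)\subset\{x:\langle u_i,x\rangle\le0,\ i\in I\}=\{0\}$. At the same time $P(U,c^{(k)})$ has volume $\|b^{(k)}\|_\infty^{-n}$.

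We now need the volume to stay away from zero after normalisation, that is, $\|b^{(k)}\|_\infty$ bounded. This is where Zhu's theorem enters. The polytope $P(U,b^{(k)})$ contains the origin and has volume one. If $\|b^{(k)}\|_\infty$ were unbounded, some $b_i$ would be large while $h_P(u_i)$ stays bounded by Zhu's diameter bound.

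So I would first replace $b$ by its \emph{active} part $\bar b_i=\min(b_i,h_P(u_i))=h_P(u_i)$. The step I expect to be hardest is to show that, on $M_\gamma$, the constraint $\sum\gamma_iu_i/b_i=0$ together with $v=1$ forces every $u_i$ to be nearly active. The idea is that inactive facets with large $b_i$ contribute little to the relation, while the active ones carry a balanced configuration. I would handle this by combining the limit argument above with Zhu's theorem applied to the actual polytopes, whose diameters are bounded.

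\textbf{Step 2: consequences for $\psi_\gamma$.} Once $b_i\ge\delta$ on $M_\gamma$, the bound $\psi_\gamma(y)=\sum_i\gamma_i\log\Theta_\gamma(y)_i\ge\log\delta$ follows from $\|\gamma\|_1=1$.

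For compactness of the sublevel sets, take $y$ with $\psi_\gamma(y)\le c$. Each coordinate satisfies
\[
\gamma_j\log b_j\le c-\sum_{i\ne j}\gamma_i\log\delta ,
\]
so $b=\Theta_\gamma(y)$ ranges in a subset of $[\delta,R]^m$.

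$M_\gamma$ is closed in $\R^m_{>0}$ because $v$ and the relation are continuous. The set $M_\gamma\cap[\delta,R]^m$ is therefore compact.

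Its image under the continuous inverse $b\mapsto\pi(b)/\langle\omega,b\rangle$ from Lemma~\ref{lem:parity-slice}(i) is compact. The sublevel set is a closed subset of this image, since $\psi_\gamma$ is continuous. It is therefore compact.
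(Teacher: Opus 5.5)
Your Step 2 is correct and is the paper's argument. Step 1, however, stops at the decisive point, and the route you sketch for finishing it cannot work.

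Rescaling by $\|b^{(k)}\|_\infty$ forces you to show that $\|b^{(k)}\|_\infty$ stays bounded. You propose to get this from the claim that on $M_\gamma$ every $u_i$ is nearly active, and that claim is false. For $d\geq1$, $M_\gamma$ is homeomorphic to the non-compact set $\mathcal Y$ by Lemma~\ref{lem:parity-slice}(i). Once the lower bound $b_i\geq\delta$ holds, $M_\gamma$ is closed in $\R^m$, so it must be unbounded, since closed and bounded would make it compact. Volume-one polytopes with normals in $U$ have uniformly bounded diameter, so the large coordinates of points of $M_\gamma$ belong to redundant constraints. For the same reason your opening claim that every $b_i$ is a genuine support number is not true. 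No argument about $M_\gamma$ as a whole will bound $\|b\|_\infty$, and the rescaled polytopes $P(U,c^{(k)})$ give you no control of the volume.

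The rescaling is unnecessary, because your dependence argument already finishes the proof. Let $Z=\{i:\lambda_i>0\}$. Since $\lambda_i=\lim_k\gamma_i\min_jb^{(k)}_j/b^{(k)}_i$ and $\min_jb^{(k)}_j\to0$, you get $b^{(k)}_i\to0$ for every $i\in Z$. As you observed, general position gives $|Z|\geq n+1$, so $\{u_i:i\in Z\}$ spans $\R^n$; having a strictly positive dependence, it also positively spans $\R^n$. Hence $Q=\{x:\langle u_i,x\rangle\leq1,\ i\in Z\}$ is a bounded polytope. With $\beta_k=\max_{i\in Z}b^{(k)}_i\to0$ you have $P(U,b^{(k)})\subseteq\beta_kQ$, so $v(b^{(k)})\leq\beta_k^n\vol(Q)\to0$, contradicting $v(b^{(k)})=1$. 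This uses general position directly and needs no compactness theorem.

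The paper takes a different route. It normalizes the coefficients $\gamma_i/b^{(k)}_i$ by their maximum instead of normalizing $b$. It then applies Zhu's compactness theorem to the actual polytopes $P(U,b^{(k)})$, which contain the origin and have volume one regardless of redundant constraints, and passes to a Hausdorff limit $P_\infty$ of volume one. From $\langle u_i,x\rangle\leq0$ and $\sum_{i\in Z}c_i\langle u_i,x\rangle=0$ it concludes $P_\infty\subseteq u_i^\perp$ for $i\in Z$. There, general position enters only through Zhu's theorem, and $Z\neq\emptyset$ suffices.
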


\begin{proof}
	Suppose that $\min_ib^{(k)}_i\to0$ for a sequence of points
	$b^{(k)}\in M_\gamma$. Put $\mu_k=\max_i(\gamma_i/b^{(k)}_i)$, so that
	$\mu_k\to\infty$, and let
	$c^{(k)}_i=\gamma_i/(\mu_kb^{(k)}_i)\in(0,1]$. Then
	$\max_ic^{(k)}_i=1$ and $\sum_ic^{(k)}_iu_i=0$. After passing to a
	subsequence, $c^{(k)}\to c\in[0,1]^m\setminus\{0\}$ with
	$\sum_ic_iu_i=0$, and $b^{(k)}_i=\gamma_i/(\mu_kc^{(k)}_i)\to0$ for
	all $i\in Z=\{i:c_i>0\}$.

	The polytopes $P_k=P(U,b^{(k)})$ contain the origin and have volume
	one. Hence, by Zhu's compactness theorem
	\cite[Theorem~4.3]{Zhu2014}, they are uniformly bounded; note that
	general position in \cite{Zhu2014} includes that the vectors are not
	contained in a closed hemisphere, which holds since $\pos(U)=\R^n$. By
	Blaschke's selection theorem we may assume that $P_k\to P_\infty$ in
	the Hausdorff metric, where $\vol(P_\infty)=1$. Every $x\in P_\infty$
	is the limit of points $x_k\in P_k$, and so
	$\langle u_i,x\rangle\leq\lim_kb^{(k)}_i=0$ for $i\in Z$. As
	$\sum_{i\in Z}c_i\langle u_i,x\rangle=0$ with $c_i>0$, all these inner
	products vanish, i.e., $P_\infty\subseteq u_i^\perp$ for $i\in Z$.
	This contradicts $\vol(P_\infty)=1$.

	On $M_\gamma$ we have
	$\Psi_\gamma(b)=\sum_i\gamma_i\log b_i\geq\log\delta$. If
	$\Psi_\gamma(b)\leq c$, then
	$\gamma_i\log b_i\leq c-(1-\gamma_i)\log\delta$, so $b$ lies in a
	compact box contained in $\R^m_{>0}$. As the conditions defining
	$M_\gamma$ are closed in $\R^m_{>0}$, the set
	$\{b\in M_\gamma:\Psi_\gamma(b)\leq c\}$ is compact. Since
	$\psi_\gamma=\Psi_\gamma\circ\Theta_\gamma$ and $\Theta_\gamma$ is a
	homeomorphism, the assertions follow.
\end{proof}

In the next lemma the Hessian of $\psi_\gamma$ is taken with respect to
affine coordinates of the hyperplane $\{\ell=1\}$. At a critical point
the sign of its determinant does not depend on this choice. For a
non-degenerate symmetric bilinear form $\mathfrak q$ on a real vector
space we write $\varepsilon(\mathfrak q)\in\{\pm1\}$ for the sign of the
determinant of a Gram matrix of $\mathfrak q$, i.e.,
$\varepsilon(\mathfrak q)=(-1)^k$, where $k$ is the number of negative
eigenvalues; $\varepsilon$ is multiplicative with respect to orthogonal
direct sums. For $d=0$ we use the convention that the empty determinant
equals $1$.

\begin{lemma}
	\label{lem:parity-index}
	Let $U\in\U(n,m)$ be in general position, let
	$\gamma\in\R^m_{>0}\setminus B(U)$, $b\in S(U,\gamma)$ and
	$y_0=\Theta_\gamma^{-1}(b)$. Then $\psi_\gamma$ is smooth in a
	neighbourhood of $y_0$, $\mathrm{Hess}\,\psi_\gamma(y_0)$ is
	non-singular, and
	\[
	\sgn\det\mathrm{Hess}\,\psi_\gamma(y_0)
	=(-1)^{m+n-1}\sgn\det Df_{\cv}^U(b).
	\]
\end{lemma}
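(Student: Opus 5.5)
The plan is to compare three quadratic forms at $b$: the Hessian of $\Psi_\gamma$ on $\R^m$, the Hessian of $J_\gamma$ from the proof of Lemma~\ref{lem:parity-slice}, and the Hessian of $\psi_\gamma$ on $\mathcal Y$. I then relate the first to $Df_{\cv}^U(b)$ by an explicit rank-one identity.

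\textbf{Smoothness.} Since $\gamma\notin B(U)$, Proposition~\ref{prop:regular-points}(ii) gives $b\in W(U)$. Hence $b$ lies in the interior of a type-cone $A$, on which $v=v^A$ is a polynomial and $\det Df^A_{\cv}(b)\neq0$. By Lemma~\ref{lem:parity-slice}(i), $\psi_\gamma$ is smooth near $y_0$, and $\Psi_\gamma$ is smooth near $b$.

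\textbf{Rank-one identity.} Differentiating \eqref{eq:parity-fcvdv} gives
\[
Df_{\cv}(b)=\tfrac1n\operatorname{diag}(\nabla v)+\tfrac1n\operatorname{diag}(b)\,\mathrm{Hess}\,v .
\]
Write $H=\mathrm{Hess}\,\Psi_\gamma(b)$. Using $v(b)=1$ and $\gamma_i=b_i\partial_iv/n$ (valid since $b\in S(U,\gamma)$), a direct computation yields
\[
Df_{\cv}(b)=-\operatorname{diag}(b)\,H+\gamma\,\nabla v(b)^\intercal .
\]
By Lemma~\ref{lem:parity-functional}(i), $\Psi_\gamma$ is scale invariant, and $\nabla\Psi_\gamma(b)=0$. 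Differentiating $\nabla\Psi_\gamma(\tau b)=\tau^{-1}\nabla\Psi_\gamma(b)$ at $\tau=1$ therefore gives $Hb=0$.

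Put $M=-\operatorname{diag}(b)H$. Then $Mb=0$, and the left kernel of $M$ contains $\mathbf 1=(1,\dots,1)$, because $\mathbf 1^\intercal\operatorname{diag}(b)=b^\intercal$ and $H$ is symmetric.

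The matrix determinant lemma for a corank-one matrix then says
\[
\det(M+\gamma w^\intercal)=\frac{(w^\intercal b)(\mathbf 1^\intercal\gamma)}{\mathbf 1^\intercal b}\,\mathrm{pdet}(M),
\]
where $w=\nabla v(b)$ and $\mathrm{pdet}(M)$ is the product of the nonzero eigenvalues. Here $w^\intercal b=nv(b)=n$ by Euler's relation, and $\mathbf 1^\intercal\gamma=1$, so both prefactors are positive. Moreover, $\operatorname{sgn}\mathrm{pdet}(M)=(-1)^{m-1}\varepsilon(H|_{Q})$, where $Q$ is any complement of $\R b$ and $\varepsilon$ is as in the paragraph preceding the lemma. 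This is obtained by computing $\det M$ in a basis $\{b\}\cup$ basis of $Q$ and using $\det\operatorname{diag}(b)>0$. Since $\det Df_{\cv}(b)\ne0$, it follows that $H|_Q$ is nondegenerate. This is the step I expect to need the most care: one has to check that the corank-one determinant formula really only involves $H$ on $\R^m/\R b$, with no hidden sign.

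\textbf{Eliminating translations and scaling.} Choose $Q\supset\Lambda$; this is possible because $b\notin\Lambda$, as $\langle\omega,b\rangle>0$. Then write $Q=\Lambda\oplus Q'$, with $Q'$ mapped isomorphically by $Dq(b)$ onto the tangent space of $\mathcal Y$.

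By Lemma~\ref{lem:parity-functional}(iii), $H|_\Lambda$ is negative definite, so $\varepsilon(H|_\Lambda)=(-1)^n$. Since $t_\gamma(b)=0$, the function $J_\gamma(h)=\max_t\Psi_\gamma(h-U^\intercal t)$ has Hessian at $b$ equal to the Schur complement of $H|_\Lambda$ in $H|_Q$, restricted to $Q'$. This is the standard second-order envelope formula, obtained by differentiating the optimality condition defining $t_\gamma$.

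Because $J_\gamma=\psi_\gamma\circ q$ with $q$ a submersion and $y_0=q(b)$ a critical point, we get $\mathrm{Hess}\,J_\gamma(b)|_{Q'}=Dq^\intercal\,\mathrm{Hess}\,\psi_\gamma(y_0)\,Dq$. Multiplicativity of $\varepsilon$ then gives
\[
\varepsilon(H|_Q)=(-1)^n\,\operatorname{sgn}\det\mathrm{Hess}\,\psi_\gamma(y_0),
\]
and in particular $\mathrm{Hess}\,\psi_\gamma(y_0)$ is nonsingular.

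\textbf{Combining the signs.} Putting the two steps together,
\[
\operatorname{sgn}\det Df_{\cv}(b)=(-1)^{m-1}(-1)^{n}\operatorname{sgn}\det\mathrm{Hess}\,\psi_\gamma(y_0),
\]
which is the asserted relation.
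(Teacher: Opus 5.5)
Your proof is correct and follows the same strategy as the paper. You compute the sign of the non-degenerate form induced by $H=\mathrm{Hess}\,\Psi_\gamma(b)$ on $\R^m/\R b$ in two ways: once through the rank-one relation $Df_{\cv}^U(b)=-\mathrm{diag}(b)H+\gamma\nabla v(b)^\intercal$, giving the factor $(-1)^{m-1}$, and once by splitting off the negative definite translation block on $\Lambda$, giving $(-1)^n$. The rank-one relation is the paper's log-coordinate identity $\mathrm{Hess}\,\widehat\Psi(x_0)=-(\mathcal A-n\gamma\gamma^\intercal)$, rewritten via $\mathrm{Hess}\,\widehat\Psi(x_0)=\mathrm{diag}(b)H\,\mathrm{diag}(b)$.

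The differences are only in the linear-algebra bookkeeping. You use the corank-one determinant lemma where the paper splits $\R^m=\R\mathbf 1\oplus\gamma^\perp$. You use the Schur complement and envelope formula for $J_\gamma=\psi_\gamma\circ q$ where the paper uses the chart $R(y,t,s)$, in which the cross terms vanish because $\Theta_\gamma(y)\in M_\gamma$. The step you flagged is fine, but ``computing $\det M$'' is not the right justification since $\det M=0$. Instead, $M$ is similar to $-\mathrm{diag}(b)^{1/2}H\,\mathrm{diag}(b)^{1/2}$, which is congruent to $-H$, so Sylvester's law gives $\sgn\mathrm{pdet}(M)=(-1)^{m-1}\varepsilon(H|_Q)$ directly.
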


\begin{proof}
	By Proposition~\ref{prop:regular-points}(ii), $b\in W(U)$, i.e.,
	$b\in\inter A$ for a type-cone $A$ with $\det^A(b)\neq0$. On
	$\inter A$ we have $v=v^A$ and $f_{\cv}^U=f_{\cv}^A$, which are
	polynomials, homogeneous of degree $n$. Hence $\Psi_\gamma$ is smooth
	near $b$, and by Lemma~\ref{lem:parity-slice}(i), $\psi_\gamma$ is
	smooth near $y_0$. Since $b$ is a critical point of $\Psi_\gamma$
	(Lemma~\ref{lem:parity-functional}(ii)), the second differential
	$\mathfrak h$ of $\Psi_\gamma$ at $b$ is a well-defined symmetric
	bilinear form on $\R^m$, and by the invariance of $\Psi_\gamma$ under
	scalings, $b$ lies in its radical. We compute $\mathfrak h$ in two
	coordinate systems.

	\emph{Logarithmic coordinates.} Let $x_0\in\R^m$ be given by
	$(x_0)_i=\log b_i$, let $V(x)=v(e^{x_1},\dots,e^{x_m})$, and consider
	\[
	\widehat\Psi(x)=\Psi_\gamma(e^{x_1},\dots,e^{x_m})
	=\langle\gamma,x\rangle-\tfrac1n\log V(x).
	\]
	By \eqref{eq:parity-fcvdv}, $\nabla V=n\,f_{\cv}^A\circ\exp$, and
	hence $\nabla\widehat\Psi=\gamma-(f_{\cv}^A\circ\exp)/V$. Using
	$V(x_0)=1$ and $f_{\cv}^A(b)=\gamma$ we obtain
	\[
	\mathrm{Hess}\,\widehat\Psi(x_0)
	=-\bigl(\mathcal A-n\gamma\gamma^\intercal\bigr),
	\qquad
	\mathcal A=Df_{\cv}^A(b)\,\mathrm{diag}(b_1,\dots,b_m).
	\]
	Here $\mathcal A=\frac1n\mathrm{Hess}\,V(x_0)$ is symmetric, and
	Euler's relation for the homogeneous map $f_{\cv}^A$ gives
	$\mathcal A\mathbf 1=n\gamma$, where
	$\mathbf 1=(1,\dots,1)^\intercal$. Consider the decomposition
	$\R^m=\R\mathbf 1\oplus\gamma^\perp$ (note
	$\langle\gamma,\mathbf 1\rangle=1$). For $w\in\gamma^\perp$ we have
	$\mathbf 1^\intercal\mathcal Aw=n\langle\gamma,w\rangle=0$ and
	$\mathrm{Hess}\,\widehat\Psi(x_0)w=-\mathcal Aw$. Hence, with respect
	to this decomposition,
	\[
	\mathcal A\cong(n)\oplus\mathcal A|_{\gamma^\perp},
	\qquad
	\mathrm{Hess}\,\widehat\Psi(x_0)
	\cong(0)\oplus\bigl(-\mathcal A|_{\gamma^\perp}\bigr).
	\]
	As $\det\mathcal A=\det Df_{\cv}^U(b)\,b_1\cdots b_m\neq0$, the form
	$\mathcal A|_{\gamma^\perp}$ is non-degenerate with
	$\varepsilon(\mathcal A|_{\gamma^\perp})=\sgn\det Df_{\cv}^U(b)$.
	Since the differential of the exponential map at $x_0$ maps
	$\mathbf 1$ to $b$, we conclude that the radical of $\mathfrak h$ is
	$\R b$ and that the induced form $\bar{\mathfrak h}$ on
	$\R^m/\R b$ satisfies
	\begin{equation}
		\label{eq:parity-eps}
		\varepsilon(\bar{\mathfrak h})
		=(-1)^{m-1}\sgn\det Df_{\cv}^U(b).
	\end{equation}

	\emph{Coordinates adapted to translations and scalings.} For
	$(y,t,s)$ in a neighbourhood of $(y_0,0,0)$ in
	$\mathcal Y\times\R^n\times\R$ let
	$R(y,t,s)=e^s\bigl(\Theta_\gamma(y)-U^\intercal t\bigr)$. The map $R$
	is smooth, $R(y_0,0,0)=b$, and it has the smooth local inverse
	\[
	b'\mapsto
	\Bigl(q(b'),\
	(UU^\intercal)^{-1}U\bigl(\Theta_\gamma(q(b'))-e^{-s}b'\bigr),\ s\Bigr),
	\qquad
	s=\log\frac{\langle\omega,b'\rangle}
	{\langle\omega,\Theta_\gamma(q(b'))\rangle},
	\]
	with $q$ as in the proof of Lemma~\ref{lem:parity-slice}; this follows
	from $q\circ\Theta_\gamma=\id$ and
	$\langle\omega,U^\intercal t\rangle=0$. Hence $R$ is a local
	diffeomorphism. By scale invariance,
	$\Psi_\gamma\circ R(y,t,s)=F(y,t)
	\coloneqq\Psi_\gamma(\Theta_\gamma(y)-U^\intercal t)$ does not depend
	on $s$, and $F(y,0)=\psi_\gamma(y)$. Since
	$\Theta_\gamma(y)\in M_\gamma$, Lemma~\ref{lem:parity-functional}(iii)
	shows that $\partial_tF(y,0)=0$ for all $y$, and therefore
	$\partial_y\partial_tF(y_0,0)=0$. Moreover,
	$\partial_t^2F(y_0,0)=-\sum_i\gamma_ib_i^{-2}u_iu_i^\intercal$ is
	negative definite. Consequently,
	\[
	\mathrm{Hess}\,(\Psi_\gamma\circ R)(y_0,0,0)
	=\mathrm{Hess}\,\psi_\gamma(y_0)\oplus\partial_t^2F(y_0,0)\oplus(0),
	\]
	and since $\partial_sR(y_0,0,0)=b$, this is a Gram matrix of
	$\mathfrak h$ in which the last summand corresponds to the radical
	$\R b$. Thus
	$\bar{\mathfrak h}\cong\mathrm{Hess}\,\psi_\gamma(y_0)\oplus
	\partial_t^2F(y_0,0)$. As $\bar{\mathfrak h}$ is non-degenerate, so is
	$\mathrm{Hess}\,\psi_\gamma(y_0)$, and
	$\varepsilon(\bar{\mathfrak h})
	=(-1)^n\sgn\det\mathrm{Hess}\,\psi_\gamma(y_0)$. Together with
	\eqref{eq:parity-eps} this proves the lemma.
\end{proof}

The last ingredient is the following classical fact from degree theory.

\begin{lemma}[{Castro--Lazer \cite{CastroLazer1979}, see also
	\cite{McKenzie1984,Amann1982}}]
	\label{lem:parity-castro-lazer}
	Let $\phi\in C^1(\R^d)$ with $\phi(z)\to\infty$ as $|z|\to\infty$.
	Assume that $\nabla\phi$ has only finitely many zeros
	$z_1,\dots,z_k$, that $\phi$ is of class $C^2$ in a neighbourhood of
	each $z_j$, and that the Hessians $\mathrm{Hess}\,\phi(z_j)$ are
	non-singular. Then
	\[
	\sum_{j=1}^k\sgn\det\mathrm{Hess}\,\phi(z_j)=1.
	\]
\end{lemma}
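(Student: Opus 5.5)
The plan is to compute the Brouwer degree of the continuous vector field $\nabla\phi$ on a suitable bounded open set in two different ways. Since $\phi$ is continuous and $\phi(z)\to\infty$ as $|z|\to\infty$, every sublevel set $\{\phi\le c\}$ is compact. Choose $c>\max_j\phi(z_j)$ and put $\Omega_c\coloneqq\{z:\phi(z)<c\}$, a bounded open set containing all zeros $z_1,\dots,z_k$. Since $\nabla\phi$ has no zeros on $\partial\Omega_c\subseteq\{\phi=c\}$, the degree $\deg(\nabla\phi,\Omega_c,0)$ is defined.

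\emph{Local computation.} Near each $z_j$ the function $\phi$ is $C^2$ and $D(\nabla\phi)(z_j)=\mathrm{Hess}\,\phi(z_j)$ is non-singular. Hence $z_j$ is an isolated, non-degenerate zero of $\nabla\phi$ whose local index is $\sgn\det\mathrm{Hess}\,\phi(z_j)$. By additivity and excision of the degree,
\[
\deg(\nabla\phi,\Omega_c,0)=\sum_{j=1}^k\sgn\det\mathrm{Hess}\,\phi(z_j).
\]

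\emph{Global computation.} It remains to show that $\deg(\nabla\phi,\Omega_c,0)=1$. I would first replace $\nabla\phi$ by a smooth pseudo-gradient field $X$ defined on $\{c-\eta\le\phi\le c+\eta\}$, which contains no critical points. The field $X$ should satisfy $\langle X,\nabla\phi\rangle>0$ and $|X-\nabla\phi|<|\nabla\phi|$ there. The straight-line homotopy $(1-s)\nabla\phi+sX$ then has no zeros on $\partial\Omega_c$, so it does not change the degree. Perturbing $c$ slightly if necessary, so that the level set is a topological boundary transversal to $X$, the field is outward pointing on $\partial\Omega_c$. The Poincaré--Hopf theorem for outward-pointing fields, in its degree-theoretic form (Hopf), then gives
\[
\deg(\nabla\phi,\Omega_c,0)=\chi(\overline{\Omega_c}).
\]
To compute this Euler characteristic, note that for every $c'>c$ the compact region $\{c\le\phi\le c'\}$ contains no critical points. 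Flowing along $-X/\langle X,\nabla\phi\rangle$ (normalised so that $\phi$ decreases at unit speed) deforms $\{\phi\le c'\}$ onto $\{\phi\le c\}$, so each inclusion $\{\phi\le c\}\hookrightarrow\{\phi\le c'\}$ is a homotopy equivalence. Because $\R^d=\bigcup_{c'}\{\phi<c'\}$ and singular homology commutes with increasing unions, $\overline{\Omega_c}$ has the homology of $\R^d$. Hence $\chi(\overline{\Omega_c})=1$, and comparing the two computations yields the claim.

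An alternative way to reach the same degree, avoiding Poincaré--Hopf, is to homotope $\nabla\phi$ to $z\mapsto z-z_*$ for a point $z_*$ on a large ball. This is less attractive because coercivity does not give $\langle\nabla\phi(z),z\rangle>0$ on large spheres, so the sublevel-set route is preferable.

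\emph{Main obstacle.} I expect the main difficulty to be the regularity bookkeeping in the global step. Since $\phi$ is only $C^1$, the gradient flow need not be unique and $\{\phi=c\}$ need not be a smooth hypersurface. The plan is to work throughout with the smooth pseudo-gradient $X$ on the compact critical-point-free shell, use its flow both for the deformation retractions and to make $\partial\Omega_c$ behave like a collared boundary, and apply Poincaré--Hopf to a smooth compact manifold with boundary isotopic to $\overline{\Omega_c}$ (for instance a regular sublevel set of a smoothing of $\phi$ inside the shell). This is the step where I would consult the cited sources \cite{CastroLazer1979,Amann1982} for the cleanest formulation.
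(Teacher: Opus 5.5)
Your proposal is correct. Its local half (excision and additivity of the degree, plus the index $\sgn\det\mathrm{Hess}\,\phi(z_j)$ at a non-degenerate zero) is exactly the paper's, and working on $\Omega_c$ instead of a large ball $B_r$ changes nothing, by excision. The difference lies in the global identity $\deg(\nabla\phi,\Omega_c,0)=1$. The paper does not prove it; it quotes it from Castro--Lazer, with McKenzie's elementary proof. You instead derive it from Hopf's theorem ($\deg=\chi$ for an outward-pointing field on a compact domain with smooth boundary), together with a deformation argument showing that $\{\phi\le c\}$ has the homotopy type of $\R^d$.

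Yours is the Morse-theoretic route, $\sum_j(-1)^{\lambda_j}=\chi(\{\phi\le c\})=\chi(\R^d)=1$, where $\lambda_j$ is the Morse index of $z_j$. It explains where the $1$ comes from and adapts to settings where the right-hand side is another Euler characteristic. The paper's citation buys brevity: the proof is two lines, and all regularity issues for $C^1$ functions are left to the reference.

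Your bookkeeping is lighter than you expect. Since $\nabla\phi\neq0$ on $\{\phi=c\}$, this level set is already a $C^1$ hypersurface with outward normal $\nabla\phi/|\nabla\phi|$. So no perturbation of $c$ is needed, and $\nabla\phi$ itself points outward. The only real point is passing to a nearby smooth domain, as you suggest.

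Note also that the pseudo-gradient must be defined on all of $\{\phi>\max_j\phi(z_j)\}$, not only on the thin shell. With that, its normalised flow exhibits $\{\phi\le c\}$ directly as a strong deformation retract of $\R^d$, so the direct-limit argument in homology is unnecessary.
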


\begin{proof}
	By \cite{CastroLazer1979} (see \cite{McKenzie1984} for an elementary
	proof), the Brouwer degree of $\nabla\phi$ on the ball $B_r$ with
	respect to the value $0$ equals $1$ for all sufficiently large $r$. By
	the additivity of the degree, it equals the sum of the local indices
	of $\nabla\phi$ at the points $z_j$, and the local index at a
	non-degenerate zero is $\sgn\det\mathrm{Hess}\,\phi(z_j)$; see, e.g.,
	\cite{Deimling1985}.
\end{proof}

\begin{proof}[Proof of Theorem~\ref{thm:parity2}]
	As explained above, we may assume $\|\gamma\|_1=1$. By
	Proposition~\ref{prop:regular-points}(ii),
	Lemma~\ref{lem:parity-slice}(ii) and Lemma~\ref{lem:parity-index},
	$\psi_\gamma$ has finitely many critical points
	$y_j=\Theta_\gamma^{-1}(b^{(j)})$, where
	$S(U,\gamma)=\{b^{(1)},\dots,b^{(k)}\}$, and $\psi_\gamma$ is smooth
	with non-singular Hessian near each of them.

	If $d=0$, then $\mathcal Y$ is a point and $k=1$. If $d\geq1$, let
	$\sigma:\R^d\to\mathcal Y$ be a diffeomorphism; such a map exists
	since $\mathcal Y$ is an open convex subset of a $d$-dimensional
	affine space. Then $\phi=\psi_\gamma\circ\sigma$ satisfies the
	assumptions of Lemma~\ref{lem:parity-castro-lazer}: it tends to
	infinity at infinity by Lemma~\ref{lem:parity-coercive}, and at a
	critical point
	$\mathrm{Hess}\,\phi
	=D\sigma^\intercal\,(\mathrm{Hess}\,\psi_\gamma)\,D\sigma$. Hence, in
	both cases,
	$\sum_{j=1}^k\sgn\det\mathrm{Hess}\,\psi_\gamma(y_j)=1$, and
	Lemma~\ref{lem:parity-index} gives
	\[
	\sum_{j=1}^k\sgn\det Df_{\cv}^U(b^{(j)})=(-1)^{m+n-1}.
	\]
	A sum of $k$ terms $\pm1$ equals $\pm1$ only if $k$ is odd.
\end{proof}

\begin{remark}
	\label{rem:parity-sharp}
	Irreducibility of $U$ is not sufficient for
	Theorem~\ref{thm:parity}. If $U\in\U(2,4)$ is the set of outer unit
	normals of a trapezoid, then $U$ is irreducible, but by
	\cite[Theorem~2.14]{Subspace_Concentration_of_Geometric_Measures}
	(cf.~the proof of Theorem~\ref{thm:4goncase}) there is a non-empty
	open set of $\gamma$ with $|S(U,\gamma)|=2$, and it contains points
	$\gamma\notin B(U)$. In the proof above it is
	Lemma~\ref{lem:parity-coercive} that fails: normalized trapezoids may
	degenerate to segments, and $\psi_\gamma$ is no longer proper.
\end{remark}

\begin{remark}
	\label{rem:parity-consequences}
	\begin{enumerate}
		\item General position is used only via the uniform bound on the
		diameters of the polytopes $P(U,b)$ of volume one. Hence
		Theorem~\ref{thm:parity} holds for every $U\in\U(n,m)$ with this
		property. Such a $U$ is automatically irreducible, since for a
		reducible $U$ and $P(U,b)=Q_1\oplus Q_2$ the polytopes
		$t^{n_2}Q_1\oplus t^{-n_1}Q_2$, $t>0$, all have the same volume
		but unbounded diameters.

		\item Theorem~\ref{thm:parity} complements
		Theorem~\ref{thm:con}: the signed number of solutions is constant
		on all of $\R^m_{>0}\setminus B(U)$, and when $\gamma$ crosses
		$B(U)$, the cardinality of $S(U,\gamma)$ changes by an even
		number, as in the transitions between one and three solutions in
		Table~\ref{tab:liu-quadrilateral}.

		\item Lemmas~\ref{lem:parity-slice} and
		\ref{lem:parity-coercive} show that $\psi_\gamma$ attains its
		minimum, which gives a solution for every $\gamma\in\R^m_{>0}$.
		This reproves Zhu's existence theorem \cite{Zhu2014}
		(cf.~\cite[Theorem~III]{baumbach2025polynomialinequalitiesconevolumespolytopes});
		for existence results beyond general position see
		\cite{BoeroeczkyHegeduesZhu2016}.

		\item Let $U\in\U(2,m)$, $m\geq4$, be in general position. The
		proof of Theorem~\ref{thm:1.7} provides
		$\gamma\in C_{\cv}(U)\setminus B(U)$ with $|S(U,\gamma)|\geq2$,
		and thus, by Theorem~\ref{thm:parity}, $|S(U,\gamma)|\geq3$.

		\item The formula in Theorem~\ref{thm:inserting-normal},
		$\det DF(\widehat b^{(k)})
		=-\frac{\widehat b^{(k)}_{m+1}}{2\alpha\beta}
		\det Df_{\cv}^U(b^{(k)})$, is consistent with
		Theorem~\ref{thm:parity}: inserting a normal changes the sign
		$(-1)^{m+n-1}$, and so do the determinants at the solutions
		$b^{+,k}$ constructed there. If $U^+$ is in general position, the
		remaining solutions in $S(U^+,\gamma^+)$, if any, therefore have
		signed count zero.

		\item In Example~\ref{exp:pentagon} ($n=2$, $m=5$) the signs of
		$\det Df_{\cv}^U$ at the three solutions are $+,-,+$, in
		accordance with $(-1)^{m+n-1}=1$.
	\end{enumerate}
\end{remark}

\section{The derivative of the cone-volume map}
\label{sec:derivative}

In this final section we relate the differential structure of the
cone-volume map to the geometry of the prescribed outer unit normals.
We begin with an explicit formula for the derivative on a proper
type-cone.

We use the notation from Section~2 (cf.~\eqref{eq:nr1}, \eqref{eq:nr2}, \eqref{eq:facetineq}). In particular, $J^A(i)$ denotes
the set of indices of the facets adjacent to $F_i$, and
\[
F_i(b)
=
b_i u_i+
\left\{
x\in u_i^\perp:
\langle x,\nu_{i,j}\rangle\leq h_{i,j},
\ j\in J^A(i)
\right\},
\]
where
\[
h_{i,j}=b_j\csc\theta_{ij}-b_i\cot\theta_{ij}.
\]

\begin{theorem}
	\label{thm:dev}
	Let $U\in\U(n,m)$ and let $A\in\mathcal A(U)$ be a proper type-cone.
	Write $f_i=f_i^A$. Then, for $b\in\inter A$,
	\[
	\frac{\partial f_i}{\partial b_j}(b)=
	\begin{cases}
		\displaystyle
		\frac{1}{n(n-1)}
		\sum_{k\in J^A(i)}
		\vol_{n-2}\bigl(F_i(b)\cap F_k(b)\bigr)
		\bigl(b_k\csc\theta_{ik}-n b_i\cot\theta_{ik}\bigr),
		& j=i,\\[3mm]
		\displaystyle
		\frac{1}{n}
		\vol_{n-2}\bigl(F_i(b)\cap F_j(b)\bigr)
		\,b_i\csc\theta_{ij},
		& j\in J^A(i),\\[2mm]
		0,
		& \text{ otherwise}.
	\end{cases}
	\]
\end{theorem}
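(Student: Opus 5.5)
We write $f_i(b)=\frac{b_i}{n}\,g_i(b)$ with $g_i(b)\coloneqq\vol_{n-1}(F_i(b))$. On $\inter A$ the combinatorial type is fixed, so by Lemma~\ref{res:type_cones_and_volume_polynomials} both $g_i$ and $f_i$ are polynomials. The product rule gives
\[
\frac{\partial f_i}{\partial b_j}=\frac{\delta_{ij}}{n}\,g_i+\frac{b_i}{n}\frac{\partial g_i}{\partial b_j}.
\]
Thus everything reduces to differentiating the facet volume $g_i$. By \eqref{eq:facetineq}, $F_i(b)$ is a translate by $b_iu_i$ of the $(n-1)$-polytope
\[
Q_i(b)=\{x\in u_i^\perp:\langle x,\nu_{i,k}\rangle\le h_{i,k},\ k\in J^A(i)\}
\]
in the $(n-1)$-dimensional space $u_i^\perp$. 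Its support numbers are $h_{i,k}=b_k\csc\theta_{ik}-b_i\cot\theta_{ik}$, which depend affinely on $b$, and the facets of $Q_i(b)$ are exactly the translates of $F_i(b)\cap F_k(b)$. Since $A$ is proper and $b\in\inter A$, the polytope $Q_i(b)$ stays strongly isomorphic when $b$ varies in a neighbourhood. We may therefore apply the classical formula \eqref{eq:parity-dv} one dimension lower, inside $u_i^\perp$:
\[
\frac{\partial g_i}{\partial h_{i,k}}=\vol_{n-2}\bigl(F_i(b)\cap F_k(b)\bigr).
\]
The chain rule, with $\partial h_{i,k}/\partial b_j$ equal to $\csc\theta_{ik}$ if $j=k$, to $-\cot\theta_{ik}$ if $j=i$, and to $0$ otherwise, then yields the derivatives of $g_i$.

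The off-diagonal cases follow at once. If $j\in J^A(i)$, only $h_{i,j}$ depends on $b_j$, so
\[
\frac{\partial f_i}{\partial b_j}=\frac{b_i}{n}\csc\theta_{ij}\,\vol_{n-2}(F_i\cap F_j),
\]
which is the stated formula. If $j\ne i$ and $j\notin J^A(i)$, then $g_i$ does not involve $b_j$ at all, so the derivative is $0$.

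For $j=i$ we obtain
\[
\frac{\partial f_i}{\partial b_i}=\frac1n g_i-\frac{b_i}{n}\sum_{k\in J^A(i)}\cot\theta_{ik}\,\vol_{n-2}(F_i\cap F_k).
\]
To express $g_i$ through the $(n-2)$-faces, we use the pyramid (cone-volume) decomposition of $Q_i(b)$ with respect to the origin of $u_i^\perp$, i.e.\ the point $b_iu_i$ of $F_i$:
\[
g_i=\frac{1}{n-1}\sum_{k\in J^A(i)}h_{i,k}\,\vol_{n-2}(F_i\cap F_k).
\]
This identity is Euler's relation for the homogeneous polynomial $g_i$ in the variables $h_{i,k}$; it holds with signed heights even when the origin of $u_i^\perp$ does not lie in $Q_i(b)$. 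Substituting the expression for $h_{i,k}$ and collecting terms gives
\[
\frac{1}{n(n-1)}\sum_{k\in J^A(i)}\vol_{n-2}(F_i\cap F_k)\bigl(b_k\csc\theta_{ik}-b_i\cot\theta_{ik}-(n-1)b_i\cot\theta_{ik}\bigr),
\]
which is exactly the claimed expression.

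The main point needing care is the justification that $\partial g_i/\partial h_{i,k}$ equals the $(n-2)$-volume of the corresponding face. This requires that, on $\inter A$, the facet $F_i$ is described exactly by the inequalities indexed by $J^A(i)$, with no further facets of $Q_i(b)$ appearing or disappearing nearby. It also requires that the lower-dimensional volume-derivative formula \eqref{eq:parity-dv}, applied in $u_i^\perp$, is valid for varying $h_{i,k}$ that remain in the same type. Both facts follow from the strong isomorphism property in Lemma~\ref{res:type_cones_and_volume_polynomials}(i).
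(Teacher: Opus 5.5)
Your proposal is correct and takes essentially the same route as the paper. Both arguments use the product rule for $f_i=\frac{b_i}{n}\vol_{n-1}(F_i)$, the variation formula $\partial\vol_{n-1}(F_i)/\partial h_{i,k}=\vol_{n-2}(F_i\cap F_k)$ with the chain rule through $h_{i,k}=b_k\csc\theta_{ik}-b_i\cot\theta_{ik}$, and the Minkowski (pyramid) formula in the affine hull of $F_i$ to rewrite the diagonal term; your observation that this formula is Euler's relation for the degree-$(n-1)$ homogeneous facet volume, so signed heights cause no trouble, justifies a step the paper simply cites.
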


\begin{proof} 
	The standard variation formula for the volume of a polytope with
	respect to its support numbers gives
	\[
	\frac{\partial}{\partial h_{ij}}\vol_{n-1}(F_i(b))
	=
	\vol_{n-2}\bigl(F_i(b)\cap F_j(b)\bigr);
	\]
	see, e.g., \cite[Section~5.1]{Schneider2014}. Hence
	\[
	\frac{\partial}{\partial b_j}\vol_{n-1}(F_i(b))
	=
	\begin{cases}
		\displaystyle
		-\sum_{k\in J^A(i)}
		\vol_{n-2}\bigl(F_i(b)\cap F_k(b)\bigr)\cot\theta_{ik},
		& j=i,\\[2mm]
		\displaystyle
		\vol_{n-2}\bigl(F_i(b)\cap F_j(b)\bigr)\csc\theta_{ij},
		& j\in J^A(i),\\[2mm]
		0,
		& \text{ otherwise}.
	\end{cases}
	\]
	Since
	\[
	f_i(b)=\frac{b_i}{n}\vol_{n-1}(F_i(b)),
	\]
	the off-diagonal formulas follow immediately. For the diagonal term,
	we use the Minkowski formula in the $(n-1)$-dimensional affine hull of
	$F_i(b)$:
	\[
	(n-1)\vol_{n-1}(F_i(b))
	=
	\sum_{k\in J^A(i)}
	h_{ik}\,
	\vol_{n-2}\bigl(F_i(b)\cap F_k(b)\bigr).
	\]
	Substituting the expression for $h_{ik}$ gives
	\[
	\frac{\partial f_i}{\partial b_i}(b)
	=
	\frac{1}{n(n-1)}
	\sum_{k\in J^A(i)}
	\vol_{n-2}\bigl(F_i(b)\cap F_k(b)\bigr)
	\bigl(b_k\csc\theta_{ik}-n b_i\cot\theta_{ik}\bigr),
	\]
	as claimed.
\end{proof}

We next characterize when the cone-volume polynomial $f_i$ is linear in
the variable $b_i$. We use the following local form of a standard
consequence of the equality case of the Brunn--Minkowski inequality (cf.~\cite[Lemma~3.1]{henk2014cone}).

\begin{lemma}
	\label{lem:Facet_subset_relation}
	Let $P\subset\R^n$ be a polytope, $\dim P=n$, let
	$u\in\sph^{n-1}$, and for $t\in\R$ let 
	\[
	F(t)\coloneqq P\cap\{x\in\R^n:\langle u,x\rangle=t\}.
	\]
	Suppose that, for some $b\in\R$ and $\varepsilon>0$,
	\[
	\vol_{n-1}(F(b-\delta))
	=
	\vol_{n-1}(F(b))
	>0
	\qquad\text{for all }\delta\in[0,\varepsilon].
	\]
	Then $F(b-\delta)$ is a translate of $F(b)$ for every
	$\delta\in(0,\varepsilon)$.
\end{lemma}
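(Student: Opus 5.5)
The plan is to use the Brunn--Minkowski inequality in dimension $n-1$ for the parallel sections $F(t)$, together with its equality case. For $t$ in the interval $I=\{t: F(t)\neq\emptyset\}$, convexity of $P$ gives
\[
F\bigl((1-\lambda)s+\lambda t\bigr)\supseteq (1-\lambda)F(s)+\lambda F(t),
\]
after identifying each hyperplane $\{\langle u,x\rangle=t\}$ with $u^\perp$ by translation along $u$. Hence $g(t)\coloneqq\vol_{n-1}(F(t))^{1/(n-1)}$ is concave on $I$.

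By hypothesis $g$ is constant and positive on $[b-\varepsilon,b]$. Fix $\delta\in(0,\varepsilon)$. Both $b-\delta$ and $b$ lie in $[b-\varepsilon,b]$, and $b-\delta$ is a proper convex combination of the endpoints $b-\varepsilon$ and $b$. Write $b-\delta=(1-\lambda)(b-\varepsilon)+\lambda b$ with $\lambda\in(0,1)$. Then
\[
g(b-\delta)\ \ge\ \vol_{n-1}\bigl((1-\lambda)F(b-\varepsilon)+\lambda F(b)\bigr)^{1/(n-1)}\ \ge\ (1-\lambda)g(b-\varepsilon)+\lambda g(b)=g(b).
\]
Since $g(b-\delta)=g(b)$, equality holds throughout. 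In particular, equality holds in the Brunn--Minkowski inequality for the two $(n-1)$-dimensional convex bodies $F(b-\varepsilon)$ and $F(b)$, both of which have positive volume. The equality case then says these two bodies are homothetic. Since their volumes are equal, they are in fact translates of each other.

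To obtain the conclusion for each $\delta$ separately, I would simply repeat the argument with $\varepsilon$ replaced by any $\varepsilon'\in(\delta,\varepsilon]$. This shows $F(b-\varepsilon')$ is a translate of $F(b)$ for every $\varepsilon'\in(0,\varepsilon]$, which covers all $\delta\in(0,\varepsilon)$. If one prefers a more direct route, the equality $F(b-\delta)\supseteq(1-\lambda)F(b-\varepsilon)+\lambda F(b)$, combined with equal volumes and convexity, forces $F(b-\delta)=(1-\lambda)F(b-\varepsilon)+\lambda F(b)$. Substituting $F(b-\varepsilon)=F(b)+z$ then shows $F(b-\delta)$ is the translate $F(b)+(1-\lambda)z$.

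The main obstacle is the careful use of the equality case. It requires both sets to be full-dimensional in $u^\perp$. This holds because $g$ is constant and positive on the whole interval, so $F(b-\varepsilon)$ and $F(b)$ both have positive $(n-1)$-volume. One also has to make sure the identification of parallel hyperplanes with $u^\perp$ is compatible with Minkowski combinations; it is, since translating along $u$ commutes with convex combinations. The remaining steps are routine.
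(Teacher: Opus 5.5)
Your proposal is correct and takes essentially the paper's approach. After translating the sections into $u^\perp$, the inclusion $(1-\lambda)F(b-\varepsilon)+\lambda F(b)\subseteq F(b-\delta)$ together with equal positive volumes forces equality in Brunn--Minkowski, so $F(b-\varepsilon)$ and $F(b)$ are homothetic with equal volume, hence translates; your ``direct route'' (inclusion plus equal volume giving $F(b-\delta)=F(b)+(1-\lambda)z$) is exactly how the paper finishes. One harmless slip: in your alternative route you need $\varepsilon'=\delta$ itself, so $\varepsilon'$ should range over $(0,\varepsilon]$ rather than $(\delta,\varepsilon]$.
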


\begin{proof}
	Set $H=\{x\in\R^n:\langle u,x\rangle=0\}$ and $K_t\coloneqq F(t)-tu\subset H$. Fix
	$\delta=\lambda\varepsilon$ with $\lambda\in(0,1)$. Convexity of $P$
	gives
	\[
	(1-\lambda)K_b+\lambda K_{b-\varepsilon}
	\subseteq K_{b-\delta}.
	\]
	All three sets have the same positive $(n-1)$-dimensional volume.
	Hence Brunn--Minkowski in $H$ gives equality, and therefore
	$K_b$ and $K_{b-\varepsilon}$ are homothetic. Since their volumes are
	equal, they are translates; write
	\[
	K_{b-\varepsilon}=K_b+v.
	\]
	It follows that $K_b+\lambda v\subseteq K_{b-\delta}$. The two sets
	again have the same volume, and hence
	\[
	K_{b-\delta}=K_b+\lambda v.
	\]
	Thus
	\[
	F(b-\delta)=F(b)+\lambda v-\delta u,
	\]
	which proves the assertion.
\end{proof}

\begin{proposition}
	\label{thm:gamma_linear_iff_zylinder}
	Let $U\in\U(n,m)$ and let $A\in\mathcal A(U)$ be a proper type-cone.
	Then $f_i^A$ has degree 1 in the variable $b_i$
	if and only if
	$
	\dim\lin \{u_j:j\in J^A(i)\} =n-1.$
\end{proposition}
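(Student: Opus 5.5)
The plan is to reduce the statement to a question about how the facet volume depends on $b_i$. On $\inter A$ we have $f_i^A(b)=\frac{b_i}{n}\,g_i(b)$ with $g_i(b)=\vol_{n-1}(F_i(b))$, and $g_i$ is a polynomial on $A$ (it equals $\frac{n}{b_i}f_i^A$, or can be read off from \eqref{eq:facetineq} and Lemma~\ref{res:type_cones_and_volume_polynomials}). Since $g_i>0$ on $\inter A$ (the type-cone is proper), $f_i^A$ has degree exactly $1$ in $b_i$ if and only if $g_i$ does not depend on $b_i$. Because $\inter A$ is open, this is equivalent to $\partial g_i/\partial b_i\equiv0$ on $\inter A$. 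Geometrically, this means that for fixed $b\in\inter A$ and all small $\delta\geq0$ we have $\vol_{n-1}(F_i(b-\delta e_i))=\vol_{n-1}(F_i(b))$.

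First I will record a lower bound. Since $F_i(b)$ is a bounded $(n-1)$-polytope in $b_iu_i+u_i^\perp$, its outer normals $\nu_{i,j}$, $j\in J^A(i)$, positively span $u_i^\perp$. Each $\nu_{i,j}$ lies in $\lin\{u_i,u_j\}$. Suppose some $w\neq0$ were orthogonal to $u_i$ and to all $u_j$ with $j\in J^A(i)$. Then $w\in u_i^\perp$ would be orthogonal to all $\nu_{i,j}$, which is impossible. Two consequences follow. First, $\dim\lin\{u_j:j\in J^A(i)\}\geq n-1$, so the condition in the statement is equivalent to this span being a hyperplane $L=w^\perp$. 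Second, in that case $\langle w,u_i\rangle\neq0$.

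For the implication "$\Leftarrow$", let $L=w^\perp$ and normalize $\langle w,u_i\rangle=1$. Translating by $sw$ leaves $\langle u_j,\cdot\rangle$ unchanged for $j\in J^A(i)$ and shifts $\langle u_i,\cdot\rangle$ by $s$. By \eqref{eq:facetineq}, the facet $F_i$ is cut out inside its hyperplane only by the adjacent constraints. Hence $F_i(b+se_i)=F_i(b)+sw$ for small $|s|$, since the type-cone, and so $J^A(i)$, is unchanged there. Therefore $g_i$ is constant in $b_i$.

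For the implication "$\Rightarrow$", I use $P(b-\delta e_i)=P(b)\cap\{\langle u_i,x\rangle\leq b_i-\delta\}$. This shows that $F_i(b-\delta e_i)$ is the section $F(b_i-\delta)$ of $P(b)$ from Lemma~\ref{lem:Facet_subset_relation}. The constancy of the volumes then gives, by that lemma, $F_i(b-\delta e_i)=F_i(b)+\lambda v-\delta u_i$ for $\delta\in(0,\varepsilon)$, with $\lambda=\delta/\varepsilon$ and $v\in u_i^\perp$ a fixed vector. So the slab of $P(b)$ between the heights $b_i-\varepsilon$ and $b_i$ is a prism $F_i(b)+[0,1]z$ with $z=v-\varepsilon u_i$. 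The main point is to identify its lateral facets with the facets $F_j$, $j\in J^A(i)$. For each $j\in J^A(i)$, the facet $F_j(b)$ contains the $(n-2)$-face $G=F_i(b)\cap F_j(b)$, and near $G$ the boundary of $P(b)$ inside the slab is the lateral facet $G+[0,1]z$. Since $u_j$ is the unique outer normal of $P(b)$ at relative interior points of that face, $\langle u_j,z\rangle=0$. Thus $\{u_j:j\in J^A(i)\}\subset z^\perp$, which gives dimension $\leq n-1$. Together with the lower bound this yields equality. I expect this identification of the prism's lateral facets with the adjacent facets to be the only delicate step; the rest is bookkeeping with \eqref{eq:facetineq}.
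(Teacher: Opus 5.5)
Your proof is correct and follows the paper's route. In one direction, Lemma~\ref{lem:Facet_subset_relation} turns constancy of $\vol_{n-1}(F_i)$ in $b_i$ into a translation of $F_i$ whose direction is orthogonal to every adjacent $u_j$; in the other, translating by the normal $w$ of the hyperplane spanned by the adjacent normals gives $F_i(b+se_i)=F_i(b)+sw$. Two differences are worth noting. Your positive-spanning argument for $\dim\lin\{u_j:j\in J^A(i)\}\geq n-1$ and $\langle w,u_i\rangle\neq0$ supplies what the paper takes for granted. Where you identify the prism's lateral facets, the paper simply uses $h_{F_i(b-\delta e_i)}(u_j)=b_j=h_{F_i(b)}(u_j)$, which holds because adjacency is constant on $\inter A$.
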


\begin{proof}
	Suppose first that $f_i^A$ has degree 1 in $b_i$. Since
	\[
	f_i^A(b)=\frac{b_i}{n}\vol_{n-1}(F_i(b)),
	\]
	the polynomial $\vol_{n-1}(F_i(b))$ is independent of $b_i$.
	Fix $b\in\inter A$ and choose $\varepsilon>0$ such that
	$b-\delta e_i\in\inter A$ for $0\leq\delta\leq\varepsilon$.
	Since only the $i$-th support number changes,
	\[
	F_i(b-\delta e_i)
	=
	P(U,b)\cap\{x:\langle u_i,x\rangle=b_i-\delta\}.
	\]
	Hence Lemma~\ref{lem:Facet_subset_relation} gives
	$F_i(b-\delta e_i)=F_i(b)+t_\delta$ 
	for $0<\delta<\varepsilon$. Apparently,  $t_\delta\ne 0$ 
	and for every $j\in J^A(i)$ the $h_{F_i(b-\delta e_i)}(u_j)=b_j=h_{F_i(b)}(u_j)$, and therefore
	\[
	\langle u_j,t_\delta\rangle=0.
	\]
	Thus all normals adjacent to $u_i$ lie in the hyperplane
	$\{x\in\R^n : \langle t_\delta, x\rangle=0\}$.
	
	Conversely, suppose that the normals adjacent to $u_i$ are contained in
	an $(n-1)$-dimensional subspace $L$. Choose
	$v\ne 0$ orthogonal to $L$. As 
	$u_i\notin L$ we may assume that
	$\langle u_i,v\rangle=1$. Then for  $b\in\inter A$ and sufficiently small $s>0$ we get  
	\[
	F_i(b+s e_i)=F_i(b)+sv,
	\]
	because $\langle u_j,v\rangle=0$ for every $j\in J^A(i)$.
	Consequently, $\vol_{n-1}(F_i(b))$ is independent of $b_i$, and hence
	$f_i^A(b)$ has degree 1 in  $b_i$.
\end{proof}

\begin{example}
	\label{Example:ConvergencePconePartitionPconeU}
	Consider 
	\begin{equation*}
		U =
		\begin{pmatrix}
			0 & -\frac{1}{\sqrt{2}} & 0 & \frac{1}{\sqrt{2}} & 0 \\
			0 & 0 & \frac{1}{\sqrt{2}} & 0 & -\frac{2}{\sqrt{5}} \\
			1 & -\frac{1}{\sqrt{2}} & -\frac{1}{\sqrt{2}}
			& -\frac{1}{\sqrt{2}} & -\frac{1}{\sqrt{5}}
		\end{pmatrix}
		=(u_1,u_2,u_3,u_4,u_5)
	\end{equation*}    
	Let $A\in\mathcal A(U)$ be the proper type-cone in which $F_2(b)$ and $F_4(b)$ are triangles (see Figure~\ref{fig:linear-bi-example}). 
	\begin{figure}[h]
		\centering
			\centering
			\begin{tikzpicture}%
				[x={(-0.574987cm, -0.248707cm)},
				y={(-0.818162cm, 0.174783cm)},
				z={(-0.000003cm, 0.952678cm)},
				scale=1.000000,
				back/.style={loosely dotted, thin},
				edge/.style={color=black!95!black, thick},
				facet/.style={fill=brown!95!black,fill opacity=0.800000},
				vertex/.style={inner sep=1pt,circle,draw=black!25!black,fill=black!75!black,thick}]
				%
				%
				\coordinate (1.00000, -0.50000, 1.00000) at (1.00000, -0.50000, 1.00000);
				\coordinate (1.00000, 2.00000, 1.00000) at (1.00000, 2.00000, 1.00000);
				\coordinate (-2.00000, 2.00000, 1.00000) at (-2.00000, 2.00000, 1.00000);
				\coordinate (-0.50000, 0.50000, -0.50000) at (-0.50000, 0.50000, -0.50000);
				\coordinate (-0.50000, 0.25000, -0.50000) at (-0.50000, 0.25000, -0.50000);
				\coordinate (-2.00000, -0.50000, 1.00000) at (-2.00000, -0.50000, 1.00000);
				\draw[edge,back] (1.00000, -0.50000, 1.00000) -- (1.00000, 2.00000, 1.00000);
				\draw[edge,back] (1.00000, -0.50000, 1.00000) -- (-0.50000, 0.25000, -0.50000);
				\draw[edge,back] (1.00000, -0.50000, 1.00000) -- (-2.00000, -0.50000, 1.00000);
				\node[vertex] at (1.00000, -0.50000, 1.00000)     {};
				\fill[facet] (-0.50000, 0.50000, -0.50000) -- (1.00000, 2.00000, 1.00000) -- (-2.00000, 2.00000, 1.00000) -- cycle {};
				\fill[facet] (-2.00000, -0.50000, 1.00000) -- (-2.00000, 2.00000, 1.00000) -- (-0.50000, 0.50000, -0.50000) -- (-0.50000, 0.25000, -0.50000) -- cycle {};
				\draw[edge] (1.00000, 2.00000, 1.00000) -- (-2.00000, 2.00000, 1.00000);
				\draw[edge] (1.00000, 2.00000, 1.00000) -- (-0.50000, 0.50000, -0.50000);
				\draw[edge] (-2.00000, 2.00000, 1.00000) -- (-0.50000, 0.50000, -0.50000);
				\draw[edge] (-2.00000, 2.00000, 1.00000) -- (-2.00000, -0.50000, 1.00000);
				\draw[edge] (-0.50000, 0.50000, -0.50000) -- (-0.50000, 0.25000, -0.50000);
				\draw[edge] (-0.50000, 0.25000, -0.50000) -- (-2.00000, -0.50000, 1.00000);
				\node[vertex] at (1.00000, 2.00000, 1.00000)     {};
				\node[vertex] at (-2.00000, 2.00000, 1.00000)     {};
				\node[vertex] at (-0.50000, 0.50000, -0.50000)     {};
				\node[vertex] at (-0.50000, 0.25000, -0.50000)     {};
				\node[vertex] at (-2.00000, -0.50000, 1.00000)     {};
			\end{tikzpicture}
		
		\caption{A polytope in the type-cone of Example~\ref{Example:ConvergencePconePartitionPconeU}.}
		\label{fig:linear-bi-example}
	\end{figure}
	
	The normals adjacent to $u_2$, namely $u_1,u_3$ and $u_5$, are contained in a two-dimensional subspace. Hence the cone-volume polynomial
	\[
	f_2(b_1,b_2,b_3,b_4,b_5)=\frac{1}{2\sqrt{2}}b_1^2b_2+\frac{2}{3}b_1b_2b_3+\frac{\sqrt{5}}{3\sqrt{2}}b_1b_2b_5+\frac{2\sqrt{2}}{9}b_2b_3^2+\frac{2\sqrt{5}}{9}b_2b_3b_5+\frac{5}{18\sqrt{2}}b_2b_5^2
	\]
	is linear in $b_2$. On the other hand, the normals adjacent to $u_3$, namely $u_1,u_2,u_4$ and $u_5$, are not contained in a two-dimensional subspace; hence the cone-volume polynomial
	\[
	\begin{split}
		f_3(b_1,b_2,b_3,b_4,b_5)=\frac{\sqrt{2}}{3}b_1^2b_3+\frac{2}{3}b_1b_2b_3+\frac{2}{3}b_1b_3b_4+\frac{4\sqrt{2}}{9}b_2b_3^2+\frac{2\sqrt{5}}{9}b_2b_3b_5\\
		-\frac{8\sqrt{2}}{27}b_3^3+\frac{4\sqrt{2}}{9}b_3^2b_4-\frac{8\sqrt{5}}{27}b_3^2b_5+\frac{2\sqrt{5}}{9}b_3b_4b_5-\frac{5\sqrt{2}}{27}b_3b_5^2
	\end{split}
	\]
	is not linear in $b_3$.
\end{example}

The preceding proposition describes one extreme. At the other extreme,
one may ask when $f_i^A$ contains the pure monomial $b_i^n$. Even
general position of the normals adjacent to $u_i$ does not suffice, as
the following example shows.

\begin{example}
	Let $U=(u_1,u_2,u_3,u_4,u_5)\in\U(3,5)$, where
	\begin{align*}
		u_1&=(0,0,1)^\top,\\
		u_2&=\frac{1}{\sqrt{4-2\sqrt2}}
		\left(0,-1,1-\sqrt2\right)^\top,\qquad
		u_3=\frac{1}{\sqrt{10}}(-1,0,-3)^\top,\\
		u_4&=\frac{1}{\sqrt{4-2\sqrt2}}
		\left(-\frac{\sqrt2}{2},\frac{\sqrt2}{2},1-\sqrt2\right)^\top,\qquad
		u_5=\frac{1}{\sqrt2}
		\left(\frac{\sqrt2}{2},\frac{\sqrt2}{2},-1\right)^\top .
	\end{align*}
	Consider a type-cone $A$ in which $u_1$ is adjacent to
	$u_2,u_3,u_4$ and $u_5$. The set $U$ is in general position, but
	the cone-volume polynomial
	\[
	\begin{split}
		f_{1}(b_1,b_2,b_3,b_4,b_5)=\frac{1}{3}\Big(
		&\sqrt{16+8\sqrt{2}}\,b_1^2b_2
		-2\sqrt{10}\,b_1^2b_3
		+\sqrt{16+8\sqrt{2}}\,b_1^2b_4
		+2\sqrt{2}\,b_1^2b_5\\
		&+(2-\sqrt{2})\,b_1b_2^2
		+\sqrt{40-20\sqrt{2}}\,b_1b_2b_3
		+\sqrt{16-8\sqrt{2}}\,b_1b_2b_5
		-5b_1b_3^2\\
		&+\sqrt{80-40\sqrt{2}}\,b_1b_3b_4
		+(-2+\sqrt{2})\,b_1b_4^2
		+\sqrt{8-4\sqrt{2}}\,b_1b_4b_5
		+b_1b_5^2\Big)
	\end{split}
	\]
	does not contain the monomial $b_1^3$.
\end{example}

The additional condition needed is that the type-cone is unbounded in
the direction $e_i$. Under this assumption, general position is required
only for the normals adjacent to $u_i$.

\begin{proposition}
	\label{ubd}
	Let $U\in\U(n,m)$ and let $A\in\mathcal A(U)$ be a proper type-cone.
	Assume that $\R_{\geq0}e_i\subseteq A$  
	and that the normals $\{u_j:j\in J^A(i)\}$
	are in general position. Then the cone-volume polynomial $f_i^A$
	contains the monomial $b_i^n$ with positive coefficient.
\end{proposition}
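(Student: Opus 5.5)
The idea is to isolate the coefficient of $b_i^n$ in $f_i^A$ as a limit along the ray $\R_{\geq0}e_i$, and then identify that limit with the volume of an explicit nondegenerate simplex. Since $f_i^A$ is a polynomial of degree at most $n$ (Lemma~\ref{res:type_cones_and_volume_polynomials}), the coefficient $c$ of $b_i^n$ satisfies
\[
c=\lim_{s\to\infty}s^{-n}f_i^A(b+se_i)
\]
for any fixed $b\in\inter A$. Here $\inter A+\R_{\geq0}e_i\subseteq\inter A$, because $A$ is a convex cone containing the ray $\R_{\geq0}e_i$. Thus it suffices to show that $s^{-(n-1)}\vol_{n-1}(F_i(b+se_i))$ has a positive limit. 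The factor $b_i+s$ in $f_i=\frac{b_i}{n}\vol_{n-1}(F_i)$ supplies the remaining power of $s$.

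To compute this limit, I will use the facet description \eqref{eq:facetineq}. Along the ray, $b_j$ is fixed for $j\neq i$, and $J^A(i)$ does not change. Hence
\[
F_i(b+se_i)-(b_i+s)u_i=\{x\in u_i^\perp:\langle x,\nu_{i,j}\rangle\le b_j\csc\theta_{ij}-(b_i+s)\cot\theta_{ij},\ j\in J^A(i)\}.
\]
Dividing by $s$ and letting $s\to\infty$, these scaled facets converge in the Hausdorff sense to
\[
Q\coloneqq\{x\in u_i^\perp:\langle x,\nu_{i,j}\rangle\le-\cot\theta_{ij},\ j\in J^A(i)\}.
\]
This convergence needs $Q$ to be bounded with nonempty interior, and I will derive that from the facets being $(n-1)$-dimensional throughout $\inter A$. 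Continuity of volume then gives a limit equal to $\vol_{n-1}(Q)$. Writing the constraint as $\langle x+u_i,u_j\rangle\le0$, which is equivalent after multiplying by $\sin\theta_{ij}$, we see that $Q+u_i$ is exactly the section of the polyhedral cone $\{y:\langle y,u_j\rangle\le0,\ j\in J^A(i)\}$ by the hyperplane $\langle y,u_i\rangle=1$. Its facets can only come from the normals $u_j$, $j\in J^A(i)$.

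The key step, and the one I expect to be the main obstacle, is proving that $Q$ is full-dimensional in $u_i^\perp$; this is where general position of the adjacent normals enters. The plan has three parts.
\begin{enumerate}
\item Since $F_i(b+se_i)$ stays a facet for all $s$ and is scaled by $s$, $Q$ is nonempty. It is bounded because the $u_j$ with $j\in J^A(i)$, together with $u_i$, positively span the relevant directions. More precisely, $F_i$ is a bounded polytope whose recession cone $\{x\in u_i^\perp:\langle x,\nu_{i,j}\rangle\le0\}$ is $\{0\}$, and $Q$ has this same recession cone.
\item The only remaining danger is that $Q$ degenerates to lower dimension, for example a single point. That happens when the cone $\{y:\langle y,u_j\rangle\le0\}$ meets $\{\langle y,u_i\rangle=1\}$ only in a face cut out by at least $n$ of the $u_j$ simultaneously tight.
\item So I will argue by contradiction. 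Assume $\dim Q<n-1$. Then there is a point $x_0\in Q$ together with the set $T\subseteq J^A(i)$ of indices tight at $x_0$, chosen so that $Q\subseteq\{\langle x,\nu_{i,j}\rangle=-\cot\theta_{ij},\ j\in T'\}$ for some subset $T'$. Using the facet structure on the interior of the type cone (every facet of $F_i$ persists, and each $F_i\cap F_j$ is $(n-2)$-dimensional), I will derive from this an affine dependency among the tight constraints, namely that $\{\langle y,u_j\rangle=0\}$ for $j\in T'$ intersects $\{\langle y,u_i\rangle=1\}$. Counting dimensions forces $n$ adjacent normals $u_j$ to be orthogonal to the nonzero vector $x_0+u_i$, hence linearly dependent. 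This contradicts general position.
\end{enumerate}
If this dimension argument is tricky, I would try first an alternative route: show directly that each $j\in J^A(i)$ actually supports an $(n-2)$-face of $Q$. Asymptotically the $(n-2)$-faces of $F_i(b+se_i)$ scale like $s^{n-2}$, by Theorem~\ref{thm:dev} applied along the ray, and I would use general position to rule out their collapse.

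Once $\vol_{n-1}(Q)>0$ is established, we get
\[
c=\frac1n\vol_{n-1}(Q)>0,
\]
which proves the claim.
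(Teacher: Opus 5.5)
Your route is correct and genuinely different from the paper's. The paper never works with the facet $F_i$ itself. It identifies the coefficient of $b_i^n$ in $f_i^A$ with that of the volume polynomial, using that each $f_j^A$, $j\neq i$, is divisible by $b_j$. It then bounds $\vol_n(P_\lambda)$ from below by $\vol_n(C\cap H^-_{u_i,\lambda})$, where $C=\bigcap_{j\neq i}H^-_{u_j,0}$ is the recession cone of all the other constraints. Finally it proves $\dim C=n$ by projecting onto $(\lin C)^\perp$ and arguing that the facet normals of this projection become normals adjacent to $u_i$ for large $\lambda$.

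You instead rescale the facet $F_i(b+se_i)$. Its description \eqref{eq:facetineq} involves only the adjacent normals and is valid along the whole ray. This buys two things. General position of $\{u_j:j\in J^A(i)\}$ applies directly, so you do not need the paper's step showing that the relevant normals are adjacent. And you get the exact value $c=\frac1n\vol_{n-1}(Q)$, the volume of the pyramid $\conv(\{0\}\cup(Q+u_i))$, rather than only a lower bound. The price is the limit argument. The trivial recession cone from your part 1 makes the rescaled facets uniformly bounded, so their limit points lie in $Q$, which gives $Q\neq\emptyset$. Once $Q$ is $(n-1)$-dimensional, every compact subset of its relative interior lies in the rescaled facets for large $s$, and this already gives a positive lower bound for the limit.

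Your part 3, however, is not yet a proof. At an arbitrary point of a lower-dimensional $Q$ there need not be $n$ tight constraints. Neither the facet structure on $\inter A$ nor an affine dependency is needed; your count works once $x_0$ is taken to be a vertex.

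Here is how it goes. As $Q$ is a nonempty bounded polytope, it has a vertex $x_0$. Put $y_0=x_0+u_i$, so $\langle y_0,u_i\rangle=1$, and let $T\subseteq J^A(i)$ be the set of indices tight at $x_0$.
\begin{enumerate}
\item Being a vertex means that $u_i$ together with the $u_j$, $j\in T$, spans $\R^n$. All $u_j$, $j\in T$, lie in the $(n-1)$-dimensional space $y_0^\perp$.
\item General position therefore forces $|T|=n-1$, and $\{u_j:j\in T\}$ is a basis of $y_0^\perp$.
\item Since $u_i\notin y_0^\perp$, the restrictions of these $u_j$ to $u_i^\perp$ are linearly independent.
\item Hence, near $x_0$, the set $Q$ coincides with $x_0+\{z\in u_i^\perp:\langle z,u_j\rangle\le0,\ j\in T\}$, which is a full-dimensional simplicial cone in $u_i^\perp$.
\end{enumerate}
Thus $\dim Q=n-1$, and your conclusion $c=\frac1n\vol_{n-1}(Q)>0$ follows.
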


\begin{proof}
	Fix $\overline{b}\in\inter A$ and, for $\lambda\geq \overline{b}_i$, set
	\[
	b(\lambda)\coloneqq \overline{b}+(\lambda-\overline{b}_i)e_i,
	\qquad
	P_\lambda\coloneqq P(U,b(\lambda)).
	\]
	Since $\R_{\geq0}e_i\subseteq A$, we have
	$b(\lambda)\in\inter A$ for every $\lambda\geq \overline{b}_i$.
	
	Let $\beta$ denote the coefficient of $b_i^n$ in the polynomial $f_i^A(b)$. Since, for
	$j\neq i$, the polynomial $f_j^A$ is divisible by $b_j$, the monomial
	$b_i^n$ occurs in none of these polynomials. Hence, using
	$\vol_n(P(U,b(\lambda)))=\sum_{j=1}^m f_j^A(b(\lambda))$
	we obtain
	\[
	\beta
	=
	\lim_{\lambda\to\infty}
	\lambda^{-n}\vol_n(P_\lambda).
	\]
	Set
	\[
	Q\coloneqq \bigcap_{j\neq i}H^-_{u_j,\overline{b}_j},
	\qquad
	C\coloneqq \bigcap_{j\neq i}H^-_{u_j,0},
	\]
	where for $t\in\R$, $H^-_{u_j,t}=\{x\in\R^n :\langle u_j,x\rangle\leq t\}$. 
	Then $ P_\lambda=Q\cap H^-_{u_i,\lambda}$.  
	Since $u_i$ continues to define a facet of $P_\lambda$ for arbitrarily
	large $\lambda$, we have $h_Q(u_i)=\infty$, and hence
	$h_C(u_i)=\infty$.
	
	We claim that the recession cone $C$ is full-dimensional. Suppose otherwise and put
	\[
	V\coloneqq (\lin C)^\perp,
	\qquad d\coloneqq \dim V>0.
	\]
	The projection $Q\vert V$ is a $d$-dimensional bounded polytope in
	$V$, and therefore has at least $d+1$ facets. Let $v\in V$ be an outer
	unit normal of one of these facets. Since $v$ vanishes on $\lin C$, we have $F_Q(v)=F_Q(v)+C$,
	and, in particular, $F_Q(v)$ is a facet of $Q$.

	Moreover, since $h_C(u_i)=\infty$, for all sufficiently large
	$\lambda$ the truncation of $F_Q(v)$ by
	$H^-_{u_i,\lambda}$ is a facet of $P_\lambda$ adjacent to
	$F_i(P_\lambda)$. Thus $v$ is one of the normals
	$\{u_j:j\in J^A(i)\}$. Consequently, at least $d+1$ normals adjacent to $u_i$ lie in the
	$d$-dimensional subspace $V$, contradicting their general position.
	Hence
	\[
	\dim C=n.
	\]
	
	The set $C\cap H^-_{u_i,1}$ is bounded. Indeed, otherwise there exists a non-zero $w$ 
	satisfying 
	\[
	\langle u_j,w\rangle\leq0
	\qquad\text{for every }j=1,\ldots,m,
	\]
	which is impossible since $U$ positively spans $\R^n$. Since $C$ is
	full-dimensional, $C\cap H^-_{u_i,1}$ has positive $n$-dimensional
	volume. Moreover, $\overline{b}_j>0$ for all $j$, so $C\subseteq Q$ and
	\[
	P_\lambda
	\supseteq
	C\cap H^-_{u_i,\lambda}.
	\]
	By homogeneity of the cone $C$,
	\[
	\begin{aligned}
		\beta
		=
		\lim_{\lambda\to\infty}
		\lambda^{-n}\vol_n(P_\lambda)
		\geq
		\lim_{\lambda\to\infty}
		\lambda^{-n}
		\vol_n(C\cap H^-_{u_i,\lambda})
		=
		\vol_n(C\cap H^-_{u_i,1})
		>0.
	\end{aligned}
	\]
	Thus $f_i^A$ contains the monomial $b_i^n$ with positive coefficient.
\end{proof}

\section*{Acknowledgements}

The authors used ChatGPT (OpenAI) for language editing, stylistic improvements,
and for discussing the presentation of some arguments; no proof was generated by it. However,
Theorem~\ref{thm:parity} originated in an exchange with Claude (Anthropic):
the phenomenon had been observed numerically by the
authors beforehand, whereas the variational and degree-theoretic proof
given there, as well as a first draft of its exposition, were produced by
the model. All statements and proofs have been verified by the authors, who take full
responsibility for the mathematical content.

\newpage 

\appendix

\section{The quadrilateral classification of Liu et al.}
\label{app:liu-quadrilaterals}

For completeness, we record the part of the classification of Liu et
al.~\cite[Theorems~3.1 and~3.3]{LiuLuSunEtAl2024} that is used in the
proof of Theorem~\ref{thm:4goncase}.

Let $U=(u_1,u_2,u_3,u_4)\in\U(2,4)$ be in general position and let
$\gamma=(\gamma_1,\gamma_2,\gamma_3,\gamma_4)\in\R^4_{>0}$.
Liu et al. associate with $U$ parameters $\lambda,b,b_{12}$, where
$b_{12}\neq0$, $b>0$, $b\neq1$, and $\lambda<1$. To avoid confusion
with our support numbers, we write $\rho\coloneqq b$ and $\sigma\coloneqq b_{12}$.

If $\sigma>0$ (respectively, $\sigma<0$), then
$|S(U,\gamma)|$ equals the number of roots greater than $1$
(respectively, smaller than $1$) of
\begin{equation}
	\begin{split}
		q_\gamma(X)={}&
		\gamma_1\rho^2X^4
		-\rho\bigl(\gamma_1+\gamma_2
		+\rho(\gamma_1+\gamma_4)\bigr)X^3\\
		&+\rho(\gamma_1+2\gamma_2+\gamma_3+2\gamma_4)X^2\\
		&-\bigl(\gamma_3+\gamma_4
		+\rho(\gamma_2+\gamma_3)\bigr)X
		+\gamma_3.
	\end{split}
	\label{eq:liu-quartic}
\end{equation}
We shall only need the case
\[
\gamma_1\gamma_3=\gamma_2\gamma_4.
\]
For this case, the classification of Liu et al. is summarized in
Table~\ref{tab:liu-quadrilateral} 
where we use the abbreviation
\[
\lambda_0
\coloneqq 
\frac{(\gamma_3+\gamma_4)(\gamma_1+\gamma_2)^3}
{\gamma_1\gamma_2
	(\gamma_1+\gamma_2+\gamma_3+\gamma_4)^2}.
\]

\begin{table}[htbp]
	\centering
	\small
	\resizebox{\textwidth}{!}{%
		\begin{tabular}{|c|c|c|c|c|}
			\hline
			\multirow{2}*{Range of $\lambda$}
			& \multicolumn{2}{c|}{\multirow{2}*{Subcases}}
			& \multicolumn{2}{c|}{Number of solutions} \\ \cline{4-5}
			& \multicolumn{2}{c|}{}
			& $\sigma>0$ & $\sigma<0$ \\ \hline
			
			$(-\infty,-\frac{(\gamma_1-\gamma_2)^2}
			{4\gamma_1\gamma_2})$
			& \multicolumn{2}{c|}{None}
			& \multicolumn{2}{c|}{$1$} \\ \hline
			
			\multirow{4}*{
				$\{-\frac{(\gamma_1-\gamma_2)^2}
				{4\gamma_1\gamma_2}\}$}
			& \multirow{2}*{$\gamma_1<\gamma_2$}
			& $\lambda\neq\lambda_0$ & $2$ & \multirow{2}*{$1$} \\ \cline{3-4}
			& & $\lambda=\lambda_0$ & $1$ & \\ \cline{2-5}
			& \multirow{2}*{$\gamma_1>\gamma_2$}
			& $\lambda\neq\lambda_0$ & \multirow{2}*{$1$} & $2$ \\ \cline{3-3}\cline{5-5}
			& & $\lambda=\lambda_0$ & & $1$ \\ \hline
			
			\multirow{4}*{
				$(-\frac{(\gamma_1-\gamma_2)^2}
				{4\gamma_1\gamma_2},0)$}
			& \multirow{2}*{
				$\lambda>\frac{\gamma_1-\gamma_2}{2\gamma_1}$}
			& $\lambda\neq\lambda_0$ & $3$ & \multirow{2}*{$1$} \\ \cline{3-4}
			& & $\lambda=\lambda_0$ & $2$ & \\ \cline{2-5}
			& \multirow{2}*{
				$\lambda<\frac{\gamma_1-\gamma_2}{2\gamma_1}$}
			& $\lambda\neq\lambda_0$ & \multirow{2}*{$1$} & $3$ \\ \cline{3-3}\cline{5-5}
			& & $\lambda=\lambda_0$ & & $2$ \\ \hline
			
			\multirow{4}*{
				$\left(
				0,
				\left(
				\frac{\gamma_1+\gamma_2-\gamma_3-\gamma_4}
				{\gamma_1+\gamma_2+\gamma_3+\gamma_4}
				\right)^2
				\right)$}
			& \multirow{2}*{
				$\gamma_1+\gamma_2>\gamma_3+\gamma_4$}
			& $\lambda\neq\lambda_0$ & $3$ & \multirow{2}*{$1$} \\ \cline{3-4}
			& & $\lambda=\lambda_0$ & $2$ & \\ \cline{2-5}
			& \multirow{2}*{
				$\gamma_1+\gamma_2<\gamma_3+\gamma_4$}
			& $\lambda\neq\lambda_0$ & \multirow{2}*{$1$} & $3$ \\ \cline{3-3}\cline{5-5}
			& & $\lambda=\lambda_0$ & & $2$ \\ \hline
			
			\multirow{4}*{
				$\left\{
				\left(
				\frac{\gamma_1+\gamma_2-\gamma_3-\gamma_4}
				{\gamma_1+\gamma_2+\gamma_3+\gamma_4}
				\right)^2
				\right\}$}
			& \multirow{2}*{
				$\gamma_1+\gamma_2>\gamma_3+\gamma_4$}
			& $\lambda\neq\lambda_0$ & $2$ & \multirow{2}*{$1$} \\ \cline{3-4}
			& & $\lambda=\lambda_0$ & $1$ & \\ \cline{2-5}
			& \multirow{2}*{
				$\gamma_1+\gamma_2<\gamma_3+\gamma_4$}
			& $\lambda\neq\lambda_0$ & \multirow{2}*{$1$} & $2$ \\ \cline{3-3}\cline{5-5}
			& & $\lambda=\lambda_0$ & & $1$ \\ \hline
			
			$\left(
			\left(
			\frac{\gamma_1+\gamma_2-\gamma_3-\gamma_4}
			{\gamma_1+\gamma_2+\gamma_3+\gamma_4}
			\right)^2,
			1
			\right)$
			& \multicolumn{2}{c|}{None}
			& \multicolumn{2}{c|}{$1$} \\ \hline
		\end{tabular}%
	}
	\caption{The case distinction of Liu et al. in the case
		$\gamma_1\gamma_3=\gamma_2\gamma_4$.}
	\label{tab:liu-quadrilateral}
\end{table}

The only consequence of this classification needed in the main text
is the following.

\begin{proposition}
	\label{prop:liu-open-nonuniqueness}
	Let $U\in\U(2,4)$ be in general position. Then there exists a
	nonempty open set $\Omega\subset C_{\cv}(U)$ such that
	\[
	|S(U,\gamma)|=3
	\qquad\text{for every }\gamma\in\Omega.
	\]
\end{proposition}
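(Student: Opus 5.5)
The plan is to exhibit one explicit cone-volume vector $\gamma^0$ lying in a ``$3$-solution'' row of Table~\ref{tab:liu-quadrilateral} and away from all the boundary cases of that row, and then to spread this to an open neighbourhood. By Zhu's theorem (used in the proof of Theorem~\ref{thm:con}), $C_{\cv}(U)=\R^4_{>0}$ for $U$ in general position. So every open subset of $\R^4_{>0}$ is admissible, and the work is only to count solutions.

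\textbf{Step 1: a point with three solutions.} The parameters $\lambda,\rho,\sigma$ depend only on $U$, with $\lambda<1$. After a cyclic relabelling, and if necessary a reflection that reverses the orientation and swaps the roles of $\sigma>0$ and $\sigma<0$, we may assume $\sigma>0$. I distinguish two cases.
\begin{enumerate}
\item If $\lambda\in(0,1)$: choose $\gamma$ with $\gamma_1\gamma_3=\gamma_2\gamma_4$ and $\gamma_1+\gamma_2$ much larger than $\gamma_3+\gamma_4$. Then $\bigl(\frac{\gamma_1+\gamma_2-\gamma_3-\gamma_4}{\gamma_1+\gamma_2+\gamma_3+\gamma_4}\bigr)^2$ is close to $1$ and exceeds $\lambda$, so $\lambda$ falls into the row giving $3$ solutions for $\sigma>0$. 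We still have to ensure $\lambda\ne\lambda_0$; since $\lambda_0$ varies with $\gamma$ along the constraint surface, a small perturbation within that surface achieves this.
\item If $\lambda\le0$: choose $\gamma_1,\gamma_2$ with $(\gamma_1-\gamma_2)^2/(4\gamma_1\gamma_2)>-\lambda$ and $\gamma_1<\gamma_2$. Then $\frac{\gamma_1-\gamma_2}{2\gamma_1}<0$. If $\lambda\in(-\tfrac{(\gamma_1-\gamma_2)^2}{4\gamma_1\gamma_2},0)$ and $\lambda>\frac{\gamma_1-\gamma_2}{2\gamma_1}$, the table gives $3$. The value $\lambda=0$ itself has to be treated separately, presumably from the quartic directly.
\end{enumerate}

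\textbf{Step 2: openness.} This is the main obstacle, because the table only covers the hypersurface $\gamma_1\gamma_3=\gamma_2\gamma_4$. I would go back to the general statement that $|S(U,\gamma)|$ equals the number of roots $X>1$ of $q_\gamma$ in \eqref{eq:liu-quartic}. At $\gamma^0$, with $\lambda\ne\lambda_0$ and away from the boundary cases, the three roots in $(1,\infty)$ should be simple and should not equal $1$. Simple real roots persist under small perturbations of the coefficients, which depend continuously on $\gamma$. Likewise $q_\gamma(1)\ne0$ and the sign of $q_\gamma$ near $1$ persist, and the remaining root stays outside $(1,\infty)$. Hence a neighbourhood $\Omega$ of $\gamma^0$ has exactly three roots $>1$, and so $|S(U,\gamma)|=3$ on $\Omega$.

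The delicate point is to verify simplicity of the roots at $\gamma^0$. The degenerate rows of the table (the value $2$ at boundary values, and at $\lambda=\lambda_0$) correspond exactly to double roots or roots at $1$. Avoiding them therefore gives simplicity, or at least this is what I expect and would check from Liu et al.'s discriminant analysis. As an alternative route to openness, I would use that $\Omega$ can be chosen to avoid $B(U)$ (a measure zero set, Proposition~\ref{prop:regular-points}). Then Theorem~\ref{thm:con} gives local constancy of $|S(U,\gamma)|$ near any $\gamma^0\notin B(U)$. Since $B(U)\cap\R^4_{>0}$ is closed (proof of Theorem~\ref{thm:con}), this yields an open $\Omega$, provided $\gamma^0\notin B(U)$. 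That last condition follows once all solutions at $\gamma^0$ are nondegenerate, which is again the simple-root condition.
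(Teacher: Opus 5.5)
Your plan coincides with the paper's. You choose $\gamma^0$ on the hypersurface $\gamma_1\gamma_3=\gamma_2\gamma_4$ in a three-solution case of Table~\ref{tab:liu-quadrilateral}; your case (1) is the paper's own choice, $\gamma_1+\gamma_2\gg\gamma_3+\gamma_4$ with $\lambda\neq\lambda_0$. You then leave the hypersurface by persistence of simple roots of the quartic \eqref{eq:liu-quartic}.

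The gap is exactly at the step you call delicate: you never show that the three roots of $q_{\gamma^0}$ in $(1,\infty)$ are simple. Three solutions only give three \emph{distinct} roots in $(1,\infty)$. A priori one of them could be double, which is compatible with degree four. A double root need not survive a perturbation of $\gamma$ (it may split into a complex pair), so persistence, and hence openness, does not follow. Your claim that the degenerate rows of the table ``correspond exactly'' to double roots or roots at $1$ is an expectation, not an argument. Your alternative route through $B(U)$ and Theorem~\ref{thm:con} needs the same nondegeneracy. It also needs an unproved link between the root multiplicities of $q_\gamma$ and $\det Df_{\cv}^U$ at the solutions.

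The missing idea is a direct evaluation of the quartic. One has $q_\gamma(0)=\gamma_3>0$, and after cancellation $q_\gamma(1)=-(\rho-1)^2\gamma_4<0$ because $\rho\neq 1$. So $q_\gamma$ has a root in $(0,1)$ for every $\gamma\in\R^4_{>0}$. At $\gamma^0$ this root, together with the three distinct roots in $(1,\infty)$, gives four distinct roots of a polynomial of degree four (leading coefficient $\gamma_1\rho^2>0$). Hence all roots are simple and depend continuously on $\gamma$. Near $\gamma^0$ the three stay in $(1,\infty)$, and no fourth root can enter $(1,\infty)$, since one root always lies in $(0,1)$. Thus $|S(U,\gamma)|=3$ on a neighbourhood of $\gamma^0$. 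This also proves your unchecked claim $q_\gamma(1)\neq 0$. For $\sigma<0$ the same argument works with the root in $(1,\infty)$ forced by $q_\gamma(1)<0$ and the positive leading coefficient.

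Two minor points. First, your reflection reducing to $\sigma>0$ is unverified and unnecessary. The $\sigma<0$ column of the table mirrors the $\sigma>0$ one, so you can take the opposite subcase: e.g.\ $\gamma_3+\gamma_4\gg\gamma_1+\gamma_2$ when $\lambda\in(0,1)$, or $\gamma_2\ll\gamma_1$ when $\lambda<0$. Second, in your case (2) you must also avoid $\lambda=\lambda_0$.
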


\begin{proof}
	The case distinction in Table~\ref{tab:liu-quadrilateral} provides
	a vector $\gamma^{(0)}\in C_{\cv}(U)$ for which
	$|S(U,\gamma^{(0)})|=3$. For example, if $0<\lambda<1$ and
	$\sigma>0$, choose $\gamma_1,\gamma_3>0$ and set
	\[
	\gamma_4=\frac{\gamma_1\gamma_3}{\gamma_2}.
	\]
	As $\gamma_2\to\infty$,
	\[
	\left(
	\frac{\gamma_1+\gamma_2-\gamma_3-\gamma_4}
	{\gamma_1+\gamma_2+\gamma_3+\gamma_4}
	\right)^2
	\longrightarrow1,
	\]
	while the exceptional equality $\lambda=\lambda_0$ can be avoided.
	Hence, for sufficiently large $\gamma_2$, one of the
	three-solution cases in the table applies. The other parameter
	ranges are treated analogously.
	
	It remains to observe that the three solutions persist under small
	perturbations of $\gamma^{(0)}$. Suppose, for instance, that
	$\sigma>0$. By \eqref{eq:liu-quartic}, they correspond to three
	distinct roots of $q_{\gamma^{(0)}}$ in $(1,\infty)$. Moreover,
	\[
	q_\gamma(0)=\gamma_3>0,
	\qquad
	q_\gamma(1)=-(\rho-1)^2\gamma_4<0,
	\]
	so there is a fourth root in $(0,1)$. Thus all four roots are
	simple. Since simple roots depend continuously on the coefficients,
	the three roots in $(1,\infty)$ persist for all $\gamma$ in a
	sufficiently small neighbourhood of $\gamma^{(0)}$. The case
	$\sigma<0$ is analogous. Shrinking the neighbourhood if necessary
	yields a nonempty open set
	$\Omega\subset C_{\cv}(U)$ on which $|S(U,\gamma)|=3$.
\end{proof}

\printbibliography

@article{KolesnikovMilman2022,
  author  = {Kolesnikov, Alexander V. and Milman, Emanuel},
  title   = {Local {$L^p$}-{B}runn--{M}inkowski Inequalities for {$p<1$}},
  journal = {Memoirs of the American Mathematical Society},
  volume  = {277},
  number  = {1360},
  pages   = {v+78},
  year    = {2022},
  publisher = {American Mathematical Society},
  address = {Providence, RI},
  doi     = {10.1090/memo/1360},
  eprint  = {1711.01089},
  archivePrefix = {arXiv}
}

@book{BoeroeczkyFigalliRamos2026,
  author    = {Böröczky, Károly J. and Figalli, Alessio and Ramos, João P. G.},
  title     = {Isoperimetric Inequalities, Brunn--Minkowski Theory and Minkowski-Type Monge--Ampère Equations on the Sphere},
  series    = {Zürich Lectures in Advanced Mathematics},
  volume    = {33},
  publisher = {EMS Press},
  year      = {2026},
  isbn      = {978-3-98547-104-1},
  doi       = {10.4171/ZLAM/33}
}

@article{EleftheriouSavatovsky2020,
  author  = {Eleftheriou, Pantelis E. and Savatovsky, Alex},
  title   = {Expansions of real closed fields that introduce no new smooth functions},
  journal = {Annals of Pure and Applied Logic},
  volume  = {171},
  number  = {7},
  pages   = {102808},
  year    = {2020},
  issn    = {0168-0072},
  doi     = {10.1016/j.apal.2020.102808},
  eprint  = {1812.10151}
}

@book{Hatcher2002,
  author    = {Hatcher, Allen},
  title     = {Algebraic Topology},
  publisher = {Cambridge University Press},
  address   = {Cambridge},
  year      = {2002},
  isbn      = {0-521-79540-0}
}

@book{bochnak2013real, 
  author    = {Bochnak, J. and Coste, M. and Roy, M.-F.},
  title     = {Real algebraic geometry},
  publisher = {Springer},
  date      = {2013},
  isbn      = {978-3-662-03718-8}, 
  doi       = {10.1007/978-3-662-03718-8},
}

@article{ho1975note,
  title={A note on proper maps},
  author={C. Ho},
  journal={Proceedings of the American Mathematical Society},
  volume={51},
  number={1},
  pages={237--241},
  year={1975}
}

@misc{baumbach2025polynomialinequalitiesconevolumespolytopes,
      title={On polynomial inequalities for cone-volumes of polytopes}, 
      author={T. Baumbach and M. Henk},
      year={2025},
      eprint={2506.15370},
      archivePrefix={arXiv},
      primaryClass={math.MG},
      %url={https://arxiv.org/abs/2506.15370}, 
}

@Article{Zhu2014,
  author    = {Zhu, G.},
  journal   = {Advances in Mathematics},
  title     = {The logarithmic {M}inkowski problem for polytopes},
  date      = {2014},
  issn      = {0001-8708},
  pages     = {909--931},
  volume    = {262},
  doi       = {10.1016/j.aim.2014.06.004},
  %file      = {:Zhu2014 - The Logarithmic Minkowski Problem for Polytopes.pdf:PDF},
  groups    = {Convex Geometry, Lp-Brunn-Minkowski},
  keywords  = {52A40},
  mrnumber  = {3228445},
  owner     = {hugobib},
  timestamp = {2021-01-13},
}

@phdthesis{Subspace_Concentration_of_Geometric_Measures,
  author    = {Pollehn, H.},
  title     = {Subspace Concentration of Geometric Measures},
  date      = {2019-02}, 
  type      = {PhD thesis},
  school    = {Technische Universit\"at Berlin}
}

@article{henk2014cone,
  author    = {Henk, M. and Linke, E.},
  title     = {Cone-volume measures of polytopes},
  journal   = {Advances in Mathematics},
  volume    = {253},
  date      = {2014},
  pages     = {50--62},
  issn = {0001-8708}, 
  doi = {10.1016/j.aim.2013.11.015}, 
}

@Book{Schneider2014,
  author    = {Schneider, R.},
  publisher = {Cambridge University Press, Cambridge},
  title     = {Convex bodies: the {B}runn-{M}inkowski theory},
  date      = {2014},
  edition   = {expanded},
  isbn      = {978-1-107-60101-7},
  series    = {Encyclopedia of Mathematics and its Applications},
  volume    = {151},
  mrnumber  = {3155183},
  pages     = {xxii+736},
  doi       = {10.1017/CBO9781139003858},
}

@article{boroczky2013logarithmic_JAMS, 
  author    = {Böröczky, K. J. and Lutwak, E. and Yang, D. and Zhang, G.},
  title     = {The logarithmic Minkowski problem},
  journal   = {Journal of the American Mathematical Society},
  volume    = {26},
  number    = {3},
  date      = {2013},
  pages     = {831--852},
  issn      = {0894-0347}, 
  doi       = {10.1090/S0894-0347-2012-00741-3},
}

@Article{HuangYangZhang2025,
  author           = {Huang, Y. and Yang, D. and Zhang, G.},
  title            = {Minkowski Problems for Geometric Measures},
  date             = {2025},
   journal          = {Bulletin of the American Mathematical Society},
  date             = {2025},
  pages            = {359-425},
  volume           = {62},
  doi = {10.1090/bull/1858}
}

@Article{LiuLuSunEtAl2024,
  author           = {Liu, Y. and Lu, X. and Sun, Q. and Xiong, G.},
  journal          = {Pure and Applied Mathematics Quarterly},
  title            = {The logarithmic Minkowski problem in $\R^2$},
  date             = {2024},
  number           = {2},
  pages            = {869--902},
  volume           = {20},
  creationdate     = {2024-05-06T14:31:35},
  modificationdate = {2024-05-06T14:33:40},
doi = {10.4310/PAMQ.2024.v20.n2.a5}
}

@Book{Aigner1997,
  author           = {Aigner, M.},
  publisher        = {Springer-Verlag, Berlin},
  title            = {Combinatorial theory},
  date             = {1997},
  isbn             = {3-540-61787-6},
  note             = {Reprint of the 1979 original},
  series           = {Classics in Mathematics},
  creationdate     = {2023-08-10T15:11:40},
  doi              = {10.1007/978-3-642-59101-3},
  modificationdate = {2023-08-10T15:12:05},
  mrnumber         = {1434477},
  pages            = {viii+483},
}

@InCollection{Boeroeczky2023,
  author           = {B{\"o}r{\"o}czky, K. J.},
  booktitle        = {Harmonic analysis and convexity},
  publisher        = {Berlin: De Gruyter},
  title            = {The logarithmic {Minkowski} conjecture and the {{\(L_p\)}}-{Minkowski} problem},
  date             = {2023},
  isbn             = {978-3-11-077537-2; 978-3-11-077538-9},
  pages            = {83--118},
  creationdate     = {2025-03-25T11:24:29},
  doi              = {10.1515/9783110775389-003},
  groups           = {Lp-Brunn-Minkowski},
  keywords         = {35J96,52A40},
  modificationdate = {2025-03-25T11:24:29},
  owner            = {hugobib},
  %url              = {real.mtak.hu/191895/1/2210.00194v3.pdf},
  zbl              = {1525.35156},
  zbmath           = {7756042},
}

@article{979f1be4-becf-36f2-be30-0bebf5263aa4,
 ISSN = {0003486X, 19398980},
 URL = {http://www.jstor.org/stable/1968544},
 author = {A. P. Morse},
 journal = {Annals of Mathematics},
 number = {1},
 pages = {62--70},
 publisher = {[Annals of Mathematics, Trustees of Princeton University on Behalf of the Annals of Mathematics, Mathematics Department, Princeton University]},
 title = {The Behavior of a Function on Its Critical Set},
 volume = {40},
 year = {1939}
}

@Article{ChenLiZhu2019,
  author           = {Chen, S. and Li, Q. and Zhu, G.},
  journal          = {Transactions of the American Mathematical Society},
  title            = {The logarithmic {M}inkowski problem for non-symmetric measures},
  date             = {2019},
  issn             = {0002-9947},
  number           = {4},
  pages            = {2623--2641},
  volume           = {371},
  doi              = {10.1090/tran/7499},
  modificationdate = {2021-12-30T18:45:51},
  mrnumber         = {3896091},
}

@Article{Milnor1964,
 ISSN = {00029939, 10886826},
 URL = {http://www.jstor.org/stable/2034050},
 author = {J. Milnor},
 journal = {Proceedings of the American Mathematical Society},
 number = {2},
 pages = {275--280},
 publisher = {American Mathematical Society},
 title = {On the Betti Numbers of Real Varieties},
 volume = {15},
 year = {1964}
}

@book{schlafli1901theorie,
  title={Theorie der vielfachen Kontinuit{\"a}t},
  author={Schl{\"a}fli, L. and Graf, J.H. and Schl{\"a}fli, L. and Schl{\"a}fli, L. and Graf, J.H. and Graf, J.H. and Schweizerische Naturforschende Gesellschaft},
  series={Denkschriften, Neue, d. allg. schweiz. Gesellschaft f. d. ges. Naturwiss},
  url={https://books.google.de/books?id=6PG-zwEACAAJ},
  year={1901},
  publisher={Z{\"u}rcher \& Furrer}
}

@article{boroczky2012log,
title = {The log-Brunn–Minkowski inequality},
journal = {Advances in Mathematics},
volume = {231},
number = {3},
pages = {1974-1997},
year = {2012},
issn = {0001-8708},
doi = {10.1016/j.aim.2012.07.015},
author = {K. J. Böröczky and Erwin Lutwak and Deane Yang and Gaoyong Zhang},
}

@misc{OSCAR,
  key          = {OSCAR},
  organization = {The OSCAR Team},
  title        = {O{SCAR} -- {O}pen {S}ource {C}omputer {A}lgebra {R}esearch system, {V}ersion 1.7.3},
  year         = {2026},
  url          = {https://www.oscar-system.org},
}

@book{OSCAR-book,
  editor = {Decker, W. and Eder, C. and Fieker, C. and Horn, M. and Joswig, M.},
  title = {The {C}omputer {A}lgebra {S}ystem {OSCAR}: {A}lgorithms and {E}xamples},
  year = {2025},
  publisher = {Springer},
  series = {Algorithms and {C}omputation in {M}athematics},
  volume = {32},
  edition = {1},
  %url = {https://link.springer.com/book/9783031621260},
  issn = {1431-1550},
  doi = {10.1007/978-3-031-62127-7},
}

@article{CHEN2020107166,
title = {The $L_p$-Brunn-Minkowski inequality for $p<1$},
journal = {Advances in Mathematics},
volume = {368},
pages = {107166},
year = {2020},
issn = {0001-8708},
doi = {10.1016/j.aim.2020.107166},
author = {S. Chen and Y. Huang and Q. Li and J. Liu}
}

@article{Stancu2002,
  author       = {Stancu, Alina},
  title        = {The discrete planar $L_0$-Minkowski problem},
  journaltitle = {Advances in Mathematics},
  volume       = {167},
  pages        = {160--174},
  year         = {2002},
}

@article{Stancu2003,
  author       = {Stancu, Alina},
  title        = {On the number of solutions to the discrete two-dimensional $L_0$-Minkowski problem},
  journaltitle = {Advances in Mathematics},
  volume       = {180},
  pages        = {290--323},
  year         = {2003},
  doi          = {10.1016/S0001-8708(03)00005-7},
}

@article{Yagisita2006,
  author       = {Yagisita, Hiroki},
  title        = {Non-uniqueness of self-similar shrinking curves for an anisotropic curvature flow},
  journaltitle = {Calculus of Variations and Partial Differential Equations},
  volume       = {26},
  number       = {1},
  pages        = {49--55},
  year         = {2006},
  doi          = {10.1007/s00526-005-0357-2},
}

@article{CastroLazer1979,
  author       = {Castro, Alfonso and Lazer, Alan C.},
  title        = {Critical point theory and the number of solutions of a nonlinear Dirichlet problem},
  journaltitle = {Annali di Matematica Pura ed Applicata},
  volume       = {120},
  pages        = {113--137},
  year         = {1979},
  doi          = {10.1007/BF02411940},
}

@article{McKenzie1984,
  author       = {McKenzie, Ross H.},
  title        = {The degree of the gradient of a coercive functional},
  journaltitle = {Journal of the Australian Mathematical Society. Series A},
  volume       = {37},
  number       = {3},
  pages        = {413--415},
  year         = {1984},
  doi          = {10.1017/S1446788700022369},
}

@article{Amann1982,
  author       = {Amann, Herbert},
  title        = {A note on degree theory for gradient mappings},
  journaltitle = {Proceedings of the American Mathematical Society},
  volume       = {85},
  number       = {4},
  pages        = {591--595},
  year         = {1982},
  doi          = {10.1090/S0002-9939-1982-0660610-2},
}

@book{Deimling1985,
  author    = {Deimling, Klaus},
  title     = {Nonlinear Functional Analysis},
  publisher = {Springer},
  location  = {Berlin},
  year      = {1985},
}

@article{BoeroeczkyHegeduesZhu2016,
  author       = {Böröczky, Károly J. and Hegedűs, Pál and Zhu, Guangxian},
  title        = {On the discrete logarithmic Minkowski problem},
  journaltitle = {International Mathematics Research Notices},
  volume       = {2016},
  number       = {6},
  pages        = {1807--1838},
  year         = {2016},
  doi          = {10.1093/imrn/rnv189},
}
\end{document}